\newcommand{\smax}{{\s_{\mathrm{max}}}}
\newcommand{\dual}{\check{\ }}
\newcommand{\FF}{\mathbb{F}}
\newcommand{\ZZ}{\mathbb{Z}}
\newcommand{\GG}{\mathbb{G}}
\newcommand{\PP}{\mathbb{P}}
\newcommand{\redu}{\mathrm{red}}
\newcommand{\A}{\mathbb{A}}
\newcommand{\mcL}{\mathcal{L}}
\newcommand{\QQ}{\mathbb{Q}}
\newcommand{\NN}{\mathbb{N}}
\newcommand{\CO}{\mathcal{O}}
\newcommand{\CI}{\mathcal{I}}
\newcommand{\SHom}{\mathcal{H}\kern -.5pt om}
\newcommand{\D}{\mathcal{D}}
\newcommand{\E}{\mathcal{E}}
\newcommand{\I}{\mathcal{I}}
\newcommand{\Ysep}{{Y^{\mathrm{sep}}}}
\newcommand{\Deltasep}{{\Delta^{\mathrm{sep}}}}
\newcommand{\CH}{\mathfrak{X}}
\newcommand{\s}{\mathfrak{s}}
\newcommand{\mcA}{\mathcal{A}}
\newcommand{\R}{\mathcal{R}}
\let\div\relax
\DeclareMathOperator{\div}{div}
\DeclareMathOperator{\Div}{Div}
\DeclareMathOperator{\CaDiv}{CaDiv}
\DeclareMathOperator{\Hom}{Hom}
\DeclareMathOperator{\Cl}{Cl}
\DeclareMathOperator{\spec}{Spec}
\DeclareMathOperator{\supp}{supp}
\DeclareMathOperator{\Bl}{Bl}
\DeclareMathOperator{\chara}{char}
\DeclareMathOperator{\codim}{codim}
\newtheorem*{mainthm}{Main Theorem}
\newtheorem{thm}{Theorem}[section]
\newtheorem{prop}[thm]{Proposition}
\newtheorem{lemma}[thm]{Lemma}
\newtheorem{cor}[thm]{Corollary}
\newtheorem{lem}[thm]{Lemma}
\theoremstyle{definition}
\newtheorem{defn}[thm]{Definition}
\newtheorem{rem}[thm]{Remark}
\newtheorem{question}[thm]{Question}
\newtheorem{ex}[thm]{Example}
\title[$F$-split and $F$-regular Varieties]{$F$-split and $F$-regular Varieties with a Diagonalizable Group Action}
\author{Piotr Achinger}
\address{Department of Mathematics, University of California,
Berkeley, CA 94720, USA}
\email{achinger@math.berkeley.edu}
\author{Nathan Ilten}
\address{Department of Mathematics, Simon Fraser University, 8888 University Drive, Burnaby BC V5A 1S6, Canada}
\email{nilten@sfu.ca}
\author{Hendrik S\"u\ss{}}
\address{School of Mathematics, University of Manchester, Oxford Road, Manchester M13 9PL, UK}
\email{suess@sdf-eu.org}
\begin{document}

\begin{abstract}
Let $H$ be a diagonalizable group over an algebraically closed field $k$ of positive characteristic, and $X$ a normal $k$-variety with an $H$-action. Under a mild hypothesis, e.g. $H$ a torus or $X$ quasiprojective, we construct a certain quotient log pair $(Y,\Delta)$ and show that $X$ is $F$-split ($F$-regular) if and only if the pair $(Y,\Delta)$ if $F$-split ($F$-regular).
We relate splittings of $X$ compatible with $H$-invariant subvarieties to compatible splittings of $(Y,\Delta)$, as well as discussing diagonal splittings of $X$. We apply this machinery 
to analyze the $F$-splitting and $F$-regularity of complexity-one $T$-varieties and toric vector bundles,  among other examples.
\end{abstract}

\maketitle

\section{Introduction}
Let $k$ be an algebraically closed field of positive characteristic $p$. An \emph{$F$-splitting} of a $k$-scheme $X$ is an $\CO_X$-linear map splitting the map $F^*:\CO_X\to F_*\CO_X$ induced by the absolute Frobenius morphism; $X$ is \emph{$F$-split} if such a splitting exists.
Originally introduced by Mehta and Ramanathan in their study of Schubert varieties \cite{mehta:85a}, a scheme being $F$-split has remarkable consequences, including the vanishing of all higher cohomology groups of any ample line bundle.
The slightly stronger notion of (global) $F$-regularity\footnote{What we call $F$-regularity is called global $F$-regularity in \cite{smith:00a}.} \cite{hochster:88,smith:00a}
(see Definition \ref{defn:frob})  is closely connected to the property of being log-Fano \cite{schwede:10a}. Both notions have been extended to pairs $(X,\Delta)$ of a normal variety $X$ and an effective $\QQ$-divisor $\Delta$  \cite{schwede:10a}.

In this article, we study the $F$-splitting and $F$-regularity properties of normal varieties equipped with an effective action by a diagonalizable group. On one end of the spectrum, normal toric varieties are always $F$-regular \cite{smith:00a}. On the other hand, characterizations of $F$-split and $F$-regular normal singularities with good $\GG_m$ action have been given by Watanabe \cite{watanabe:91a} in terms of their Demazure representations. Moving to the case $H$ finite, any elliptic curve $E$ can be realized as a double cover of $\PP^1$ (as long as $\chara k \neq 2$)  inducing a $\mu_2$-action, and there is a classical characterization in terms of this cover when $E$ if $F$-split (see Example \ref{ex:elliptic}). Our main result, which we state below, allows us to uniformly treat these three above cases, along with those of many other varieties, including toric vector bundles.

Let $X$ be a normal variety with an effective action by a diagonalizable group $H$. 
Let $X^\circ$ be the open subvariety of $X$ consisting of those points with finite stabilizers, and assume that $X^\circ$ admits a geometric quotient $\pi:X^\circ\to Y$, $Y=X^\circ/H$. This is the case if e.g.~$H$ is a torus, or $X$ is quasiprojective. We define an effective $\QQ$-divisor $\Delta$ on $Y$
by
$$
\sum_{P\subset Y}\frac{\mu(P)-1}{\mu(P)} P,
$$
where $\mu(P)$ is the order of the stabilizer of the generic point of any irreducible component of $\pi^{-1}(P)\subset X^\circ$. 

\begin{mainthm}[Theorem \ref{thm:main}]
Let $X$ be an $H$-variety as above. Then $X$ is $F$-split ($F$-regular) if and only if the pair $(Y,\Delta)$ is $F$-split ($F$-regular). 
\end{mainthm}
The machinery we develop actually gives a bijection between $H$-invariant $F$-splittings of $X$ and $F$-splittings of $(Y,\Delta)$, as well as giving a partial description of the set of all $F$-splittings of $X$ in terms of the quotient pair $(Y,\Delta)$ (see Remark \ref{rem:bijection}). Furthermore, we relate $F$-splittings of $X$ compatible with $H$-invariant subvarieties to certain splittings of $(Y,\Delta)$ (Propositions \ref{prop:comp1} and \ref{prop:comp2}).
The main obstruction to applying our main theorem in practice is that the quotient $Y$ is potentially non-separated. To deal with this, we show that $(Y,\Delta)$ can be replaced by a pair $(\Ysep,\Deltasep)$ such that $\Ysep$ is a variety, and $(Y,\Delta)$ is $F$-split ($F$-regular) if and only if $(\Ysep,\Deltasep)$ is, see Proposition \ref{prop:sep} and Theorem \ref{thm:sep}. 

The main theorem has a number of applications. We recover that normal toric varieties are $F$-regular, along with Watanabe's characterization of  normal singularities with $\GG_m$ action which are $F$-split or $F$-regular (Theorem \ref{thm:watanabe}). 
Given a torus $T$, a \emph{complexity-one} $T$-variety is a $T$-variety $X$ for which $\dim T=\dim X-1$; we give an explicit characterization of $F$-split and $F$-regular complexity-one $T$-varieties, see Theorem \ref{thm:compone}. We also are able to give combinatorial criteria for the $F$-splitting or $F$-regularity of a large class of toric vector bundles. In particular, we characterize $F$-split and $F$-regular rank two vector bundles (Corollary \ref{cor:rktwo}), recover Xin's result \cite{xin:14a} that the cotangent bundle of a smooth toric variety is $F$-split (Corollary \ref{cor:cotangent}), and answer a question of Lauritzen by providing an example of an $F$-split toric vector bundle $\E$ such that $\E^*$ is not $F$-split (Example \ref{ex:lauritzen}).
Further applications include a better understanding of the $F$-splitting and $F$-regularity of cyclic covers (\S\ref{sec:cyclic}), $H$-varieties with toroidal affine quotients (\S\ref{sec:affine}), surjectively graded algebras (\S\ref{sec:surj}), and Cox rings (\S\ref{sec:cox}).

We also study \emph{diagonal splittings} of a $T$-variety $X$, that is, splittings of $X\times X$ which are compatible with the diagonal. Payne showed that normal toric varieties are not always diagonally split, and gave a combinatorial characterization of those which are \cite{payne:09a}. We give a necessary and sufficient criterion for a $T$-invariant splitting of $X\times X$  to be compatible with the diagonal, generalizing Payne's result to higher complexity $T$-varieties, see Theorem \ref{thm:diag-criterion}.
While certainly less explicit than Payne's characterization of diagonally split toric varieties, our criterion can be effectively applied in many instances, particularly for complexity-one $T$-varieties. We also deduce two easier-to-check necessary criteria for the existence of a diagonal splitting.

The rest of the paper is organized as follows. In \S \ref{sec:primer-torus-actions},
we discuss the action of a diagonalizable group on a normal variety, as well as  constructing the log pair $(Y, \Delta)$. Preliminaries on the Frobenius morphism are contained in \S \ref{sec:frob}. We prove our main result in \S \ref{sec:tfrob}, and discuss invariant compatible splittings in \S \ref{sec:compat}.  We show how to replace our potentially non-separated quotient $Y$ by a variety in \S \ref{sec:sep}. In \S \ref{sec:special}, we consider a number of special cases: cyclic covers, $T$-varieties with toroidal affine quotients, $\GG_m$ actions, complexity-one actions, surjectively graded algebras, and Cox rings. We dedicate all of \S \ref{sec:vb} to the special case of toric vector bundles. Finally, \S \ref{sec:diag} contains our results on diagonal splittings of $T$-varieties.

\subsection*{Acknowledgements}
The authors would like to thank Kevin Tucker and Karl Schwede for helpful conversations. The first author's work was supported by Polish National Science Centre (NCN) contract number 2012/07/B/ST1/03343. The second and third authors would like to thank ICMS for research in groups support. 

\section{Diagonalizable Group Scheme Actions}
\label{sec:primer-torus-actions}
\subsection{Preliminaries}
We will work over an algebraically closed field $k$. 
Let $H$ be a \emph{diagonalizable group scheme} over $k$, that is, a subgroup scheme of a torus $\mathbb{G}_m^r$ for some $r\geq 0$. For general facts about diagonalizable group schemes, see e.g.~\cite[Exp. I \S 4.4]{sga3}, \cite[\S 2.2]{waterhouse:79}, \cite{jantzen:03}.  Thus $H$ is isomorphic to a product of copies of the multiplicative group $\mathbb{G}_m$ and group schemes of $n$-th roots of unity $\mu_n = \mathbb{G}_m[n]$. We denote by $M$ the character group $\CH(H)=\Hom_{k-\text{gp.sch.}}(H, \mathbb{G}_m)$ of $H$.
By an \emph{$H$-variety} we mean a normal variety\footnote{An integral separated scheme of finite type over $k$.} $X$ together with an \emph{effective} action $H\times X\to X$. We say that $H$ acts \emph{almost freely} if for all $x\in X(k)$, the stabilizer $H_x$ is finite (as a group scheme over $k$). Note that the set of all points $x\in X$ such that $H_x$ is finite forms an open subvariety $X^\circ$ of $X$ which we call the \emph{almost-free locus}. We will always suppose that the following holds:
\begin{equation} \label{eqn:cover-condition}
 X^\circ\textit{ admits an open cover by }H\textit{-invariant affine open subsets.}
 \end{equation} 
Condition \eqref{eqn:cover-condition} is not automatically fulfilled (see e.g.~\cite[B.3.4.1]{hartshorne:77a}), but it is always satisfied if $H$ is connected \cite{sumihiro:74a}, or if $X$ is quasi-projective: 

\begin{lemma}
Condition \eqref{eqn:cover-condition} is fulfilled if $X$, or more generally, $X^\circ$ is quasi-projective.
\end{lemma}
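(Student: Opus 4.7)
Since $X^\circ$ is open in $X$, the case where $X$ is quasi-projective reduces immediately to the case where $X^\circ$ itself is quasi-projective. The plan is then to produce an $H$-equivariant locally closed embedding $\iota\colon X^\circ\hookrightarrow\PP(V)$ into the projectivization of a finite-dimensional rational $H$-representation $V$, and to exploit the standard $H$-invariant affine cover of $\PP(V)$ coming from a basis of weight vectors.

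The first step, equivariant linearization, is the main substantive input. By the classical theorem for an affine algebraic group acting on a normal quasi-projective variety, some positive power of an ample line bundle on $X^\circ$ admits an $H$-linearization, and a basis of global sections of such a linearization yields the desired $H$-equivariant immersion $\iota$. This is the place where quasi-projectivity is used: for $H$ connected it is Sumihiro's theorem cited above, while the general diagonalizable case (where $H$ can have $\mu_n$-factors and hence be disconnected) requires the more general Mumford-type linearization argument.

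With $\iota$ in hand, the rest is essentially formal. Since $H$ is diagonalizable, the representation $V$ decomposes as a direct sum of one-dimensional weight spaces; picking a basis $e_1,\ldots,e_n$ of $V$ consisting of weight vectors, the standard affines $U_i=\{[v]\in\PP(V):v_i\neq 0\}$ are each $H$-stable and cover $\PP(V)$. Pulling back along $\iota$ produces an $H$-invariant open cover of $X^\circ$ by the quasi-affine subvarieties $W_i=\iota^{-1}(U_i)$.

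Finally, to refine each $W_i$ to an $H$-invariant affine cover, let $\bar X_i=\spec A_i$ be the scheme-theoretic closure of $W_i$ in the affine variety $U_i$, so that $W_i$ is an $H$-invariant open subset of the affine $H$-variety $\bar X_i$. Given any $x\in W_i$, the ideal $I\subset A_i$ cutting out $\bar X_i\setminus W_i$ is $H$-stable; by complete reducibility of rational $H$-representations (valid since $H$ is linearly reductive, being diagonalizable), $I$ decomposes into weight subspaces. Some $H$-semi-invariant element $f\in I$ must therefore be nonvanishing at $x$, and then $D(f)\subseteq W_i$ is an $H$-invariant principal affine open neighborhood of $x$, completing the cover. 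The hard part of this strategy is the initial equivariant linearization; once it is granted, the combinatorial features of diagonalizability handle the remaining geometry cleanly.
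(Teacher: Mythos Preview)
Your argument is correct but follows a genuinely different route from the paper's. The paper decomposes $H\cong H_{\mathrm{red}}^0\times G$ with $H_{\mathrm{red}}^0$ a torus and $G$ finite, forms the geometric quotient $X^\circ/G$ (which exists and is quasi-projective for finite group schemes acting on quasi-projective varieties), notes that it is normal and carries a residual $H_{\mathrm{red}}^0$-action, applies Sumihiro's theorem to obtain a torus-invariant affine cover of the quotient, and then pulls this cover back along the finite quotient map. Your approach instead linearizes the full $H$-action in one step and reads off the invariant affine cover from the weight decomposition of the ambient representation, refining from quasi-affine to affine via semi-invariants.

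Your route is arguably more direct and makes transparent use of diagonalizability, but it concentrates all the difficulty in the equivariant linearization step for possibly disconnected and (in positive characteristic) possibly non-reduced diagonalizable group schemes; you acknowledge this is the ``hard part'' but leave the reference somewhat vague. The paper's two-stage argument trades that single black box for two easier-to-cite ones: existence of quotients by finite group schemes on quasi-projective varieties, and Sumihiro for connected groups. Either approach is fine; the paper's is slightly more self-contained given the references already in play.
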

\begin{proof}
The group scheme $H$ splits splits as a product $H\cong H_\redu^0\times G$, where $H_\redu^0$ is the reduced connected component of the identity, and $G$ is finite. If $X^\circ$ is quasi-projective, it is well known \cite[Theorem 4.3.1]{birula:02a} that there is a good geometric quotient $X^\circ\to X^\circ/G$, where  $Y=X^\circ/G$ is quasi-projective. Furthermore, $Y$ is normal \cite[pp.~126]{shafarevich:13a}. Since the action of $H_\redu^0$ on $X^\circ$ commutes with that of $G$, it descends to an action on $Y$, and $Y$ has an $H_\redu^0$-invariant affine cover by \cite{sumihiro:74a}. Pulling this back to $X^\circ$ gives the necessary $H$-invariant affine cover.
\end{proof}

Suppose now that the $H$-action on $X$ is almost free, that is $X=X^\circ$. In this situation, there is a normal (potentially non-separated) scheme $Y=X/H$ which is a geometric quotient of $X$. We denote by $\pi:X\to Y$ the quotient map. Let $\mathcal{A}=\pi_* \CO_X$, with the associated $M$-grading $\mathcal{A} = \bigoplus_{u\in M} \mathcal{A}_u$, so that $X=\spec_Y \mathcal{A}$. Our first goal is to describe the $H$-variety $X$, or equivalently the graded algebra $\mathcal{A}$, in terms of divisors on $Y$. We treat the case of tori first. Let $\CaDiv_\QQ Y$ denote the group of $\QQ$-divisors on $Y$ with Cartier multiple.

\begin{prop}\label{prop:p-div}
Let $T$ be a torus, and let $X=\spec_Y \mathcal{A}$, be a $T$-variety with an almost free action with quotient $\pi:X\to Y$. Then there exists a homomorphism $\D:\CH(T)\to \CaDiv_\QQ Y$ and a $T$-equivariant isomorphism 
\[ X \cong \spec_Y \bigoplus_{v\in \CH(T)} \CO_Y(\lfloor \D(v) \rfloor)\cdot\chi^v. \]
\end{prop}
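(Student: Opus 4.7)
The plan is to show that each graded piece $\mcA_u$ of $\mcA := \pi_*\CO_X$ is a reflexive rank-one $\CO_Y$-module of the form $\CO_Y(D_u)\chi^u$, and then to recognize $u \mapsto D_u$ as the floor of a homomorphism $\D : \CH(T) \to \CaDiv_\QQ Y$.

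First I would check that each $\mcA_u$ is reflexive of rank one. Since the $T$-action is effective and $X$ is separated, any nontrivial subgroup scheme fixing an open dense subset would fix all of $X$, so the generic stabilizer is trivial; by Hilbert 90 the generic fiber $X_\eta$ is then a trivial $T$-torsor, fixing an isomorphism $\mcA \otimes_{\CO_Y} K(Y) \cong K(Y)[M]$ and giving each $\mcA_u$ generic rank one. Normality of $X$ implies $\mcA$ is $S_2$ on $Y$, so each graded summand is $S_2$, and rank-one plus $S_2$ over a normal variety gives reflexivity; thus $\mcA_u = \CO_Y(D_u)\chi^u$ for unique Weil divisors $D_u$ satisfying $D_0=0$ and $D_u+D_v \le D_{u+v}$ from the algebra multiplication.

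The main step is the local linearization at each prime divisor $P \subset Y$ of the function $d_P(u) := \mathrm{ord}_P D_u$. Let $\xi_P$ be the generic point of a component of $\pi^{-1}(P)$ and $K_P$ its stabilizer, finite of order $\mu_P$ by almost-freeness. By effectiveness, $K_P$ must act faithfully on the cotangent line $\mathfrak{m}_{\xi_P}/\mathfrak{m}_{\xi_P}^2$ (otherwise its kernel would act trivially on the DVR $\CO_{X,\xi_P}$ and hence on $X$), so $K_P$ embeds in $\GG_m$ and is cyclic, and a $T$-semi-invariant uniformizer $\tau \in \CO_{X,\xi_P}$ has weight $\theta \in M$ generating $M/M_P \cong \ZZ/\mu_P$, where $M_P := \{u : \chi^u|_{K_P} = 1\}$. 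Since the pulled-back uniformizer $t$ of $\CO_{Y,P}$ has $T$-weight zero but equals a unit times $\tau^e$, a weight comparison forces $e=\mu_P$; and since units of $\CO_{X,\xi_P}$ reduce to nonzero semi-invariants of $\kappa(\xi_P) = k(T/K_P)$ whose weights necessarily lie in $M_P$, the minimal $\tau$-order of a weight-$u$ element equals the unique $k_0(u) \in \{0,\ldots,\mu_P-1\}$ with $k_0(u)\theta \equiv u \pmod{M_P}$. Putting these facts together yields $d_P(u) = \lfloor \phi_P(u)\rfloor$ for an explicit $\QQ$-linear $\phi_P$ whose denominator divides $\mu_P$.

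Finally, $\D(u) := \sum_P \phi_P(u)\,P$ defines a homomorphism $\CH(T) \to \CaDiv_\QQ Y$ (finite support since $D_u$ is a Weil divisor, Cartier-multiple from the bounded denominators together with the fact that $\mu_P \D(u)$ cuts out an invertible subsheaf of $\mcA$), and by construction $\mcA_u = \CO_Y(\lfloor\D(u)\rfloor)\chi^u$, yielding the desired $T$-equivariant isomorphism. The main obstacle is the local analysis in the previous paragraph: extracting the ramification index and disentangling the weight decomposition of $\CO_{X,\xi_P}$ in order to upgrade the merely superadditive function $d_P$ into the floor of an honestly linear functional.
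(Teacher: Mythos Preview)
Your approach is correct and genuinely different from the paper's. The paper argues globally: it exhibits a single finite subgroup $S\subset T$ containing all stabilizers, so that $T'=T/S$ acts freely on $X'=X/S$; by Luna's slice theorem and \'etale descent for tori, $X'\to Y$ is then a Zariski-locally-trivial $T'$-torsor, which forces $\mathcal{A}_v$ to be \emph{invertible} for $v\in\CH(T')$ and yields $\D:\CH(T')\to\CaDiv(Y)$ landing in honest Cartier divisors; one then extends to $\CH(T)$ by $\QQ$-linearity and uses normality of $\mathcal{A}$ to match $\CO_Y(\lfloor\D(v)\rfloor)$ with $\mathcal{A}_v$. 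You instead work locally, one prime divisor $P$ at a time. Your route avoids Luna's theorem, but your local analysis is more elaborate than necessary: once you have fixed the multiplicative system $u\mapsto\chi^u\in k(X)_u$, the assignment $u\mapsto\nu_{\xi_P}(\chi^u)$ is already a homomorphism $M\to\ZZ$, so with $e:=\nu_{\xi_P}(t)$ one immediately gets $d_P(u)=\lfloor\nu_{\xi_P}(\chi^u)/e\rfloor$ with $\phi_P:=\nu_{\xi_P}(\chi^{\bullet})/e$ linear; the cotangent-line argument and the identification $e=\mu_P$ are not needed for the present proposition (they become relevant only later, for Proposition~\ref{prop:stab}).

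One point deserves more care: your claim that $\D(u)$ has a Cartier multiple. What is needed is that for $u$ with $\phi_P(u)\in\ZZ$ for all $P$, the sheaf $\mathcal{A}_u$ is invertible, not merely reflexive. This does hold: the multiplication $\mathcal{A}_u\otimes\mathcal{A}_{-u}\to\CO_Y$ is surjective at every height-one prime (since there $d_P(u)+d_P(-u)=\lfloor\phi_P(u)\rfloor+\lfloor-\phi_P(u)\rfloor=0$), hence its image equals $\CO_Y$ by normality of $Y$, and a reflexive rank-one sheaf admitting such a pairing is invertible. The paper's torsor argument delivers this invertibility directly, which is precisely what that global approach buys.
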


The above proposition follows almost immediately from Theorem 3.4  of \cite{altmann:06a}. However, the authors of loc.~cit.~only state and prove this theorem for the case that the ground field has characteristic zero. We believe that their proof applies essentially unchanged in the case of positive characteristic. Instead of verifying all the details here, we present a slightly different argument here for the special case in which we are interested.

\begin{lemma}\label{lemma:free}
Suppose that a torus $T$ acts freely on a $T$-variety $V$. Then $V$ is a Zariski locally trivial $T$-torsor over $V/T$.
\end{lemma}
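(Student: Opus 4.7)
My plan is to construct Zariski local trivializations of $\pi\colon V\to V/T$ via semi-invariant functions. First I would use condition \eqref{eqn:cover-condition} to cover $V$ by $T$-invariant affine opens; since being a Zariski locally trivial $T$-torsor is local on the base, this reduces the problem to the affine case $V=\spec A$, with $V/T=\spec A_0$ and $A=\bigoplus_{u\in M} A_u$ the grading by $M=\CH(T)$ induced by the $T$-action.

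Next, given a closed point $y\in V/T$, I would pick a preimage $x\in\pi^{-1}(y)$. Because $V\to V/T$ is a geometric quotient, $\pi^{-1}(y)$ coincides with the orbit $T\cdot x$ and is closed in $V$. Freeness of the action forces the orbit map $T\to T\cdot x$ to be an isomorphism of $T$-varieties, so the closed immersion $T\cdot x\hookrightarrow V$ corresponds to a surjective graded $k$-algebra map $A\twoheadrightarrow k[T]=\bigoplus_{u\in M} k\cdot\chi^u$. After fixing generators $u_1,\ldots,u_r$ of $M$, I can lift a nonzero scalar multiple of $\chi^{u_i}$ to some $f_i\in A_{u_i}$ for each $i$.

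Each $V_{f_i}$ is $T$-invariant (since $f_i$ is semi-invariant) and contains $T\cdot x$. On the $T$-invariant principal open $V':=V_{f_1\cdots f_r}$, each $f_i$ becomes a unit in the localization $A':=A_{f_1\cdots f_r}$. Because $u_1,\ldots,u_r$ generate $M$, $A'$ will then contain a homogeneous unit in every degree, forcing each graded piece $A'_u$ to be a free rank-one $A'_0$-module and hence $A'\cong A'_0\otimes_k k[T]$ as $M$-graded $k$-algebras. This yields a $T$-equivariant isomorphism $V'\cong (V'/T)\times T$, exhibiting $\pi$ as trivial over the open $V'/T=\spec A'_0\subset V/T$ containing $y$; varying $y$ produces the desired Zariski local triviality.

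The main obstacle will be verifying that the restriction $A\to k[T]$ is genuinely surjective, which rests on the orbit $T\cdot x$ being both closed in $V$ (from the geometric quotient property) and scheme-theoretically isomorphic to $T$ (from freeness, which I would quietly take as a standard consequence of the action morphism $T\times V\to V\times V$, $(t,v)\mapsto(tv,v)$, being a monomorphism). Once these two geometric facts are in hand, everything else is formal manipulation with $M$-graded rings.
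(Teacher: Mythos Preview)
Your argument is correct, but it takes a genuinely different route from the paper. The paper's proof is a two-line citation: Luna's \'etale slice theorem (valid for tori in any characteristic) shows $V\to V/T$ is \'etale-locally trivial, and then \'etale descent for $\mathbb{G}_m$ (Hilbert~90) upgrades this to Zariski-local triviality. Your approach instead constructs trivializations by hand---lift generators of $M$ along the graded surjection $A\twoheadrightarrow k[T]$ coming from a closed free orbit, invert them, and read off $A'\cong A'_0\otimes_k k[T]$. This is essentially the classical direct proof that $\mathbb{G}_m^r$-torsors are tuples of line bundles and hence Zariski-locally trivial. Your version is more elementary and self-contained; the paper's is shorter on the page but leans on two substantial black boxes.

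One minor remark: you invoke condition~\eqref{eqn:cover-condition}, but that standing hypothesis is formulated for the particular $H$-variety $X^\circ$, not for an arbitrary $T$-variety $V$ as in the lemma. Since $T$ here is a torus (hence connected), Sumihiro's theorem already supplies the $T$-invariant affine open cover, so your reduction to the affine case goes through without needing to appeal to~\eqref{eqn:cover-condition}.
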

\begin{proof}
By Luna's \'etale slice theorem \cite{luna:73}, which holds for tori in arbitrary characteristic (cf. \cite[Remark 1.1]{alper:10}), there is an \'etale cover $Y\to V/T$ such that the pullback of $V$ to $Y$ is a $T$-torsor. But by \'etale descent for tori (cf.~e.g.~\cite[III.4]{milne:80a}), $V\to V/T$ must already be a $T$-torsor in the Zariski topology.
\end{proof}

\begin{proof}[Proof of Proposition \ref{prop:p-div}]
We adapt the proof of Theorem~3.4 of \cite{altmann:06a}. Since $\CH(T)$ is free and the action of $T$ on $X$ is effective (so that each $\mathcal{A}_v$ is non-zero), there exists a (non-unique) homomorphism $\CH(T)\to k(X)^*$, $v\mapsto f_v$ satisfying $f_v\in k(X)_{v}$ for all $v\in \CH(T)$.

There exists a finite subgroup scheme $S\subset T$ containing all stabilizer groups $T_x$. Indeed, by \cite{sumihiro:74a}, it suffices to show that a linear action of a torus on $\A^n$ only admits finitely many different stabilizer groups, and this is a straightforward calculation.
Let $T'=T/S$, leading to an inclusion $\CH(T')\subset \CH(T)$. Set  $X'= X/S = \spec_Y \bigoplus_{u\in \CH(T')} \mathcal{A}_u$. Then $T'$ acts freely on $X'$, so by Lemma~\ref{lemma:free}, $X'$ is a $T'$-torsor over $X'/T'=X/T=Y$ in the Zariski topology. Equivalently, $\mathcal{A}_v$ is an invertible sheaf for $v\in \CH(T')$ and the multiplication maps $\mathcal{A}_v\otimes \mathcal{A}_{v'}\to \mathcal{A}_{v+v'}$ are isomorphisms for $v, v'\in \CH(T')$. Thus there exists a unique homomorphism $\D:\CH(T')\to \CaDiv(Y)$ such that for all $v\in \CH(T')$ the map
\[ \mathcal{A}_{v} \to k(Y), \quad f \mapsto ff^{-1}_v\in k(Y)   \]
identifies $\mathcal{A}_{v}$ with $\CO_Y(\D(v))$. Since $\CH(T)/\CH(T')$ is torsion and $\QQ$ is uniquely divisible, there exists a unique extension $\D:\CH(T)\to \CaDiv_\QQ Y$. If $f$ is a local section of $\CO_Y(\lfloor \D(v)\rfloor)$ and $n>0$ is such that $nv\in \CH(T')$, we have $f^n f_v^n= f^nf_{nv}\in \mathcal{A}_{nv}$, and hence $ff_v\in \mathcal{A}_v$ since $\mathcal{A}$ is normal. Thus, multiplication by $f_v$ defines homomorphisms
\[ \beta_v : \CO_Y(\lfloor \D(v)\rfloor) \to \mathcal{A}_{v}  \]
as desired, and it's clear that they are multiplicative. 

We check that the induced homomorphism $\beta=\bigoplus \beta_v$ is an isomorphism. This is clearly local on $Y$, so we can assume that $Y=\spec A_0$, $X=\spec A$, $A=\bigoplus_{v\in \CH(T)} A_v$. Let $B_v = f^{-1}_v A_v \subseteq k(Y)$ be the fractional ideal corresponding to $\CO_Y(\lfloor \D(v)\rfloor)$. Then $\beta$ corresponds to the map 
\[  \beta:  B=\bigoplus_{v\in \CH(T)} B_v\cdot \chi^v \to \bigoplus_{v\in \CH(T)} A_v = A . \]
which multiplies elements of $B_v$ by $f_v$.
It suffices to check that $B$ is a normal domain and that $\beta$ induces an isomorphism on fraction fields. This follows by the last paragraph of the proof of Theorem~3.4 in \cite{altmann:06a}.
\end{proof}

In general, if $H$ is a subgroup scheme of a torus $T$, passing from $X$ to $X'=(T\times X)/H$ allows us to reduce questions about $H$-varieties to questions about $T$-varieties:

\begin{lemma} \label{lemma:xprime} 
Let $\phi:H\to G$ be a homomorphism of diagonalizable group schemes whose cokernel is a torus. In the situation above, consider the $\CH(G)$-graded $\CO_Y$-algebra
\[ \mathcal{A}' = \bigoplus_{v\in \CH(G)} \mathcal{A}_{\phi^*(v)}\cdot \chi^v = (\CO_Y[\CH(G)]\otimes \mathcal{A})^H  \]
where $\CO_Y[\CH(G)]$ is given the $\CH(H)$-grading in which $\chi^v$ has weight $-\phi^*(v)$. The kernel of the map $f:\mathcal{A}'\to\mathcal{A}$ identifying $\mathcal{A}'_v = \mathcal{A}_{\phi^*(v)}\cdot \chi^v$ with $\mathcal{A}_{\phi^*(v)}$ is generated by $\chi^v - 1$ for $v\in \ker(\phi^*)$. Let $X'=\spec_Y \mathcal{A}'$, with the induced $H$-equivariant map $f:X\to X'$. Then $X'$ is a $G$-variety with an almost free action satisfying condition~\eqref{eqn:cover-condition}, and is identified by construction with the quotient $(G\times X)/H$ where $H$ acts on $G$ via the inverse of $\phi$. Moreover, for any two points $x\in X(k)$, $x'\in X'(k)$ with the same image in $Y$, the $G$-stabilizer of $x'$ is the image of the $H$-stabilizer of $x$ under $\phi$.
\end{lemma}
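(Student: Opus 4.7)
My plan is to break the lemma into three pieces and verify each: (a) the two presentations of $\mathcal{A}'$ agree, realizing $X'=(G\times X)/H$ equivariantly; (b) the kernel of $f$ is as claimed; and (c) $X'$ satisfies the required hypotheses and the stabilizer formula holds. For (a), with the stated grading convention on $\CO_Y[\CH(G)]$, an element $\chi^v\otimes a$ with $a\in \mathcal{A}_u$ has $\CH(H)$-weight $u-\phi^*(v)$, so $H$-invariance picks out exactly the displayed decomposition $\bigoplus_v \mathcal{A}_{\phi^*(v)}\cdot\chi^v$. This identifies $\mathcal{A}'$ with $(\CO_Y[\CH(G)]\otimes\mathcal{A})^H$, and taking $\spec_Y$ exhibits $X'$ as $(G\times X)/H$ with $G$ acting by translation on the first factor (commuting with the $H$-action); the induced map $f\colon X\to X'$ corresponds to $x\mapsto [(1,x)]$.

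For (b), let $I\subset \mathcal{A}'$ be the ideal generated by $\chi^v-1$ for $v\in\ker \phi^*$. The containment $I\subseteq\ker f$ is immediate. Since $f$ is $\CH(H)$-graded via $\phi^*$, for the reverse inclusion it suffices to take an element of a single $\CH(H)$-weight $u$, having the form $\sum_i a_i\chi^{v_i}$ with $a_i\in\mathcal{A}_u$ and $\phi^*(v_i)=u$. Fixing some $v_0$ with $\phi^*(v_0)=u$ and writing $v_i=v_0+w_i$ with $w_i\in\ker\phi^*$, one obtains
\[
\sum_i a_i\chi^{v_i}=\chi^{v_0}\Bigl(\sum_i a_i(\chi^{w_i}-1)+\sum_i a_i\Bigr),
\]
and vanishing under $f$ forces $\sum_i a_i=0$ in $\mathcal{A}_u$, exhibiting the element as $\chi^{v_0}$ times a member of $I$.

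For (c), normality and integrality of $\mathcal{A}'$ are a local verification, using the hypothesis that $\mathrm{coker}(\phi)$ is a torus (so that $\ker \phi^*$ is torsion-free) to reduce to a twisted group algebra over the normal domain $\mathcal{A}$. Condition \eqref{eqn:cover-condition} for $X'$ is inherited from that for $X$ by pulling an $H$-invariant affine cover back to $G\times X$ and taking $H$-invariants. For the stabilizer formula I work with the canonical lift $x'=f(x)=[(1,x)]$ and unwind the $H$-action as $h\cdot(g,x)=(g\phi(h)^{-1},h\cdot x)$: then $g\in G_{x'}$ iff $(g,x)$ lies in the $H$-orbit of $(1,x)$, which happens iff $g=\phi(h)^{-1}$ with $h\in H_x$, yielding $G_{x'}=\phi(H_x)$. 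For any other $x''$ over $\pi(x)$, the fiber of $X'\to Y$ over $\pi(x)$ is a single $G$-orbit (identified by a quick calculation with $G/\phi(H_x)$), so $x''$ is a $G$-translate of $x'$ and, $G$ being abelian, has the same stabilizer; almost-freeness of the $G$-action on $X'$ then follows. The main bookkeeping challenge throughout is tracking the interaction of the two gradings via $\phi^*$ and keeping the sign conventions in the $H$-action on $G$ straight; once these are pinned down, each step reduces to a direct verification.
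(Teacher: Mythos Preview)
Your proposal is correct; the paper's own proof is simply the one-word ``Self-evident,'' so you have not taken a different route so much as carefully spelled out what the authors deemed routine. Your decomposition into (a) identifying the two presentations of $\mathcal{A}'$, (b) computing $\ker f$, and (c) checking the geometric properties and stabilizers is exactly the natural unpacking, and each step is handled correctly---in particular, your reduction in (b) to a single $\CH(H)$-weight (rather than $\CH(G)$-weight) is the right move, since $f$ is injective on each $\CH(G)$-graded piece, and your stabilizer argument via the explicit lift $x'=[(1,x)]$ together with the observation that fibers of $X'\to Y$ are single $G$-orbits is clean.
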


\begin{proof}
Self-evident.
\end{proof}

\begin{thm}\label{thm:p-div}
Let $H$ be a diagonalizable group scheme, and choose an injective homomorphism $\phi:H\to T$ into a torus $T$. As before, let $X=\spec_Y \mathcal{A}$, $\mathcal{A}=\bigoplus_{u\in M}\mathcal{A}_u$ be an $H$-variety with an almost free action satisfying condition~\eqref{eqn:cover-condition}, with quotient $\pi:X\to Y$. Let $X'=(T\times X)/H= \spec_Y \mathcal{A}'$, $\mathcal{A}'= \bigoplus_{v\in \CH(T)} \mathcal{A}_{\phi^*(v)}\cdot \chi^v$ be as in Lemma~\ref{lemma:xprime}.
\begin{enumerate}[(a)]
\item There exists a homomorphism $\D:\CH(T)\to \CaDiv_\QQ Y$ and an $H$-equivariant isomorphism 
\[ X \cong \spec_Y \left(\bigoplus_{v\in \CH(T)} \CO_Y(\lfloor \D(v) \rfloor)\cdot\chi^v\right)/(\chi^v-1\,:\, v\in\ker(\phi^*)). \]
\item Let $s:M\to \CH(T)$ be a set-theoretic section of $\phi^*$, and let $z(u,u') = s(u)+s(u')-s(u+u')$ be the associated $1$-cocycle. There exists a $1$-cocycle $g:M\times M\to k(Y)^*$ with $g_{u, u'}$ a section of $\CO_Y(\lfloor \D(z(u, u'))\rfloor)$ for all $u, u'\in M$, and an $H$-equivariant isomorphism 
\[
 X \cong \spec_Y \bigoplus_{u\in M} \CO_Y(\lfloor \D(s(u))\rfloor)\cdot\chi^u,
\]
where the multiplication on the right hand side is defined by the formula
\[ 
	(f\cdot \chi^u)(f'\cdot \chi^{u'}) = (g_{u,u'}^{-1} ff')\cdot \chi^{u+u'}.
\]
\item There exists a unique homomorphism $\bar\D:M\to \CaDiv_{\QQ/\ZZ} Y$ making the diagram
\[
	\xymatrix{
		\CH(T) \ar[r]^\D \ar[d]_{\phi^*} & \CaDiv_\QQ Y \ar[d] \\
		M \ar[r]_{\bar\D} & \CaDiv_{\QQ/\ZZ} Y
	}
\]
commute. In other words, $\D(\ker(\phi^*)) \subseteq \CaDiv_\ZZ Y$, so in particular $\D(z(u, u'))$ is integral for $u, u'\in M$.
\end{enumerate}
 \end{thm}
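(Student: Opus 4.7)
The plan is to deduce all three statements from Proposition~\ref{prop:p-div} applied to the associated $T$-variety $X'$, keeping careful track of how the divisor data transports across the surjection $\phi^*\colon \CH(T)\to M$ and through its kernel. Part (a) is essentially immediate: Lemma~\ref{lemma:xprime} exhibits $X'$ as an almost-free $T$-variety satisfying \eqref{eqn:cover-condition} and presents $\mathcal{A}$ as the quotient of $\mathcal{A}'$ by the ideal $(\chi^v - 1\,:\, v\in\ker\phi^*)$, while Proposition~\ref{prop:p-div} presents $\mathcal{A}'$ as $\bigoplus_{v\in\CH(T)}\CO_Y(\lfloor\D(v)\rfloor)\chi^v$.

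For part (c), the point is to exploit the freedom in choosing the finite group $S\subset T$ inside the proof of Proposition~\ref{prop:p-div}. By the last assertion of Lemma~\ref{lemma:xprime} together with the injectivity of $\phi$, every $T$-stabilizer on $X'$ is a finite subgroup of $\phi(H)\cong H$; since there are only finitely many distinct stabilizers, the subgroup of $T$ they generate is still finite and still contained in $\phi(H)$. Taking $S$ to be this subgroup gives $S\subseteq \phi(H)$, whence $\ker\phi^* = \CH(T/\phi(H))\subseteq \CH(T/S)=\CH(T')$. Since Proposition~\ref{prop:p-div} produces honestly integral Cartier divisors $\D(v)$ for $v\in\CH(T')$ (prior to the $\QQ$-linear extension), we conclude that $\D(v)$ is integral for every $v\in\ker\phi^*$; existence and uniqueness of $\bar\D$ are then formal.

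For part (b), the section $s$ identifies $\mathcal{A}_u$ with $\mathcal{A}'_{s(u)}\cong \CO_Y(\lfloor\D(s(u))\rfloor)$ via Lemma~\ref{lemma:xprime} and part (a), yielding the claimed module presentation. To compute the induced multiplication, I would work with the rational sections $f_v\in k(X')^*_v$ from the proof of Proposition~\ref{prop:p-div}, writing an element of $\mathcal{A}_u$ as $h\cdot\bar f_{s(u)}$, where $\bar f_v\in k(X)^*$ is the descent of $f_v$. The multiplicativity of $v\mapsto f_v$ yields
\[ (h_1\bar f_{s(u)})(h_2\bar f_{s(u')}) = h_1 h_2\bar f_{z(u,u')}\cdot \bar f_{s(u+u')}; \]
by (c), $\bar f_{z(u,u')}$ is a rational function on $Y$ with divisor exactly $\D(z(u,u'))$, so $g_{u,u'}:=\bar f_{z(u,u')}^{-1}$ is a nowhere-vanishing section of $\CO_Y(\lfloor\D(z(u,u'))\rfloor)$. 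The cocycle identity for $g$ then follows at once from the cocycle identity for $z$ together with the multiplicativity of $v\mapsto \bar f_v$ on $\ker\phi^*$.

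The main obstacle will be part (c). A naive argument that observes $\mathcal{A}'_v\cong \CO_Y$ for $v\in\ker\phi^*$ only shows that $\lfloor\D(v)\rfloor$ is Cartier (indeed principal), and does not give integrality of $\D(v)$ itself; one really has to arrange $\ker\phi^*\subseteq\CH(T')$ by choosing $S$ inside $\phi(H)$, which relies on the fact that all $T$-stabilizers on $X'$ come from $H$. Once (c) is in hand, (a) is essentially definitional and (b) is bookkeeping.
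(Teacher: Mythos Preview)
Your proposal is correct and matches the paper's approach essentially line for line: apply Proposition~\ref{prop:p-div} to $X'$ with the finite subgroup $S$ chosen inside $\phi(H)$ (possible since all $T$-stabilizers on $X'$ lie in $\phi(H)$ by Lemma~\ref{lemma:xprime}), which forces $\ker\phi^*\subseteq\CH(T')$ and hence integrality of $\D$ on $\ker\phi^*$ for (c); then Lemma~\ref{lemma:xprime} gives (a), and transporting the multiplicative system $v\mapsto f_v$ from $k(X')^*$ to $k(X)^*$ yields $g_{u,u'}=f_{z(u,u')}^{-1}$ for (b). The only cosmetic difference is that you construct $S$ by generating from the stabilizers whereas the paper simply asserts $S$ may be taken as a subgroup scheme of $H$; both are justified by the same observation about stabilizers on $X'$.
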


\begin{proof}
Apply Proposition~\ref{prop:p-div} to $X'$, taking as $S$ used in the proof a subgroup scheme of $H$, and use Lemma~\ref{lemma:xprime} to obtain the first isomorphism. For the second assertion, take the multiplicative system $v\mapsto f_v:\CH(T)\to k(X')^*$ used in the proof of Proposition~\ref{prop:p-div} and consider it as a homomorphism $\CH(T)\to k(X)^*$ with $f_v$ semi-invariant of weight $\phi^*(v)$. This makes sense because each $f_v$ is in $k(Y)\cdot \mathcal{A}'_v = k(Y) \cdot \mathcal{A}_{\phi^*(v)}$. 
Define $g_{u, u'} = f_{z(u, u')}^{-1} = f_{s(u)}^{-1}f_{s(u')}^{-1}f_{s(u+u')}$.
 Then $f \mapsto ff_{s(u)}^{-1}$ defines a homomorphism as desired, which is an isomorphism on the graded pieces. Finally, $\D(v)$ is integral for $v\in \ker(\phi^*)$ by construction (this is where we use the fact that $S\subseteq H$), which shows the last assertion.
\end{proof}

\begin{rem}
In \cite[\S 3]{altmann:12a} Altmann and Petersen construct finite covers of $\PP^1$ with abelian Galois group using so-called $A$-divisors. Such an $A$-divisor is a special instance of the map $\bar\D$ from Theorem \ref{thm:p-div} in the case $Y=\PP^1$ and $H$ a finite group scheme.
\end{rem}

\begin{rem}
The use of $X'=(T\times X)/H$ in order to understand the action of a diagonalizable group $H$ with torsion is reminiscent of the construction of the \emph{Cox sheaf} of a variety $Y$ when $\Cl(Y)$ has torsion; see \cite[\S 1.4]{coxbook} for details. 
\end{rem}
\subsection{Basic setup} \label{ss:setup}
In the rest of the article, unless stated otherwise, we fix the following setup. The base field $k$ is algebraically closed of characteristic $p>0$, $H$ is a diagonalizable group scheme over $k$ with character group $M$, $\phi:H\to T$ is an injective homomorphism into a torus $T$, $s:M\to \CH(T)$ is a set-theoretic section of $\phi^*$, and $z(u, u') = s(u)+s(u')-s(u+u')$. We consider an $H$-variety $X$ such that the almost-free locus $X^\circ$ satisfies condition~\eqref{eqn:cover-condition}, and $\pi:X^\circ\to Y$ is the quotient map. We let $X'=(T\times X^\circ)/H$ as in Lemma~\ref{lemma:xprime}, considered as a $T$-variety, with quotient map $\pi'$. Let $\mathcal{A} = \pi_* \CO_{X^\circ} = \bigoplus_{u\in M} \mathcal{A}_u$, $\mathcal{A}' = \pi'_* \CO_{X'} = \bigoplus_{v\in \CH(T)} \mathcal{A}_{\phi^*(v)}\cdot \chi^v$. We fix a homomorphism $v\mapsto f_v : \CH(T)\to k(X)$ with $f_v$ semi-invariant of weight $v$, and define 
$g_{u, u'} = f^{-1}_{z(u, u')}$.
 If $H$ itself is a torus, we can always assume that $H=T$, so that $z=0$ and $g_{u,u'}=1$. Theorem~\ref{thm:p-div} gives us $\D:\CH(T)\to \CaDiv_\QQ Y$, $\bar\D:M\to \CaDiv_{\QQ/\ZZ} Y$, and the isomorphism 
\begin{equation}\label{eqn:pdiv}
 X^\circ \cong \spec_Y \bigoplus_{u\in M} \CO_Y(\lfloor \D(s(u))\rfloor)\cdot\chi^u, \quad 
 \chi^u\cdot\chi^{u'} = g_{u, u'}^{-1} \chi^{u+u'}.
\end{equation}
This representation of $X$ induces an isomorphism of $k$-algebras 
\begin{equation}\label{eqn:sinv} 
	k(X)^{\mathrm{s-inv}} \cong \bigoplus_{u\in M} k(Y) \chi^u, \quad \chi^u\cdot\chi^{u'} = g^{-1}_{u,u'}\cdot \chi^{u+u'}   
\end{equation}
Here $k(X)^{\mathrm{s-inv}}$ is the subalgebra of $k(X)$ which is generated by the semi-invariant functions. We write $\D$ and $\bar D$ in the form 
\begin{equation}\label{eqn:d}
 \D(v) = \sum_P \alpha_P(v)\cdot P, \quad \bar\D(u) = \sum_P \bar\alpha_P(u)\cdot P 
\end{equation}
where the sums range over all prime divisors $P$ in $Y$, and homomorphisms $\alpha_P : \CH(T)\to \QQ$, $\bar\alpha_P: M\to \QQ/\ZZ$ with $\bar\alpha_P(\phi^*(v)) \equiv \alpha_P(v)$ modulo 1.

For any $\bar\alpha: M\to \QQ/\ZZ$, let $\mu(\bar\alpha)$ denote the order of $\bar\alpha$, i.e., the smallest natural number $n>0$ such that $n \cdot \bar\alpha(u) = 0$ for all $u\in M$. For a prime divisor $P\subseteq Y$, we denote by $\mu(P)$ the order of the stabilizer of a generic point of $\pi^{-1}(P)$. We denote by $\Delta$ the $\QQ$-divisor
\[ \Delta = \sum_{P} \frac{\mu(P) - 1}{\mu(P)}\cdot P  \]
on $Y$.
We let $B=X\setminus X^\circ$, and for a prime divisor $D\subseteq X$ contained in $B$, we denote by $\rho_D : M\to \ZZ$ the unique homomorphism satisfying $\nu_D(f)=-\rho_D(u)$ if $f\in k(X)^*$ has weight $u$ (cf. Lemma~\ref{lemma:boundary} below). We also define a polytope 
$$
P_X=\{u\in M_\QQ\ |\ \rho_D(u)\leq 1\}
$$
where $D$ ranges over all prime divisors $D$ contained in $B$. For $u\in M$ and $n\in\ZZ$, we will write $u\in n\cdot P_X$ meaning that the image of $u$ in $M_\QQ$ is in $n\cdot P_X$.

\begin{prop}[{cf. \cite[Corollary 7.11]{altmann:06a}}]\label{prop:stab}
In the above situation, let $P$ be a prime divisor on $Y$. Then the stabilizer of every generic point of the preimage has character group $\ker(\bar\alpha_P:M\to \QQ/\ZZ)$, and hence is isomorphic to $\mu_{n}$ where $n=\mu(\bar\alpha_P)$. In particular, $\mu(P) = \mu(\bar\alpha_P)$.
\end{prop}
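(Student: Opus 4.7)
The plan is to work locally at $P$. Let $A_0 = \CO_{Y,P}$, a discrete valuation ring with uniformizer $t$; by \eqref{eqn:pdiv} the stalk of $\pi_\ast\CO_{X^\circ}$ at $P$ is the $M$-graded $A_0$-algebra
\[
A \;=\; \bigoplus_{u\in M} A_0\cdot y_u, \qquad y_u := t^{-\lfloor\alpha_P(s(u))\rfloor}\chi^u,
\]
with twisted multiplication $y_u\cdot y_{u'} = c_{u,u'}\cdot t^{\epsilon(u,u')}\cdot y_{u+u'}$. Here $c_{u,u'}\in A_0^{\ast}$ is the unit part at $P$ of $g_{u,u'}^{-1} = f_{z(u,u')}$ (whose $P$-order is the integer $\alpha_P(z(u,u'))$), and a direct calculation gives
\[
\epsilon(u,u') \;=\; \{\alpha_P(s(u))\} + \{\alpha_P(s(u'))\} - \{\alpha_P(s(u+u'))\} \;\in\; \{0,1\},
\]
which is a nonnegative integer because $\bar\alpha_P\colon M\to\QQ/\ZZ$ is a homomorphism.

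Generic points $\xi$ of $\pi^{-1}(P)\subseteq X^\circ$ correspond to minimal primes $\mathfrak{p}_\xi\subset A$ containing $t$. Because each $A_u = A_0\cdot y_u$ is rank-one free over $A_0$, the subgroup
\[
L_\xi \;:=\; \bigl\{u\in M : y_u\notin \mathfrak{p}_\xi\bigr\}
\]
of $M$ is precisely the subgroup of characters of $H$ trivial on the stabilizer $H_\xi$; equivalently, by Cartier duality, $H_\xi \cong \spec k[M/L_\xi]$. The proposition thus reduces to showing $L_\xi = L := \ker\bar\alpha_P$ for every such $\xi$.

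For $u\in L$, both $\{\alpha_P(s(u))\}$ and $\{\alpha_P(s(-u))\}$ vanish, so $\epsilon(u,-u) = 0$ and $y_u\cdot y_{-u} = c_{u,-u}$ is a unit in $A_0$; hence $y_u$ is a unit in $A$ and lies in no prime, giving $L\subseteq L_\xi$. For $u\notin L$, let $k>0$ be the (finite) order of $u$ in $M/L$. A straightforward induction using telescoping of the $\epsilon$-terms yields
\[
y_u^k \;=\; C_k\cdot t^{\,k\{\alpha_P(s(u))\} - \{\alpha_P(s(ku))\}}\cdot y_{ku}
\]
for a unit $C_k\in A_0^{\ast}$. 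Since $ku\in L$ we have $\{\alpha_P(s(ku))\} = 0$; moreover $k\{\alpha_P(s(u))\}$ is a positive integer, being an integer (since $k\bar\alpha_P(u) = 0$ in $\QQ/\ZZ$) and nonzero (since $\bar\alpha_P(u)\ne 0$). Therefore $y_u^k\in tA\subseteq \mathfrak{p}_\xi$, and so $y_u\in \mathfrak{p}_\xi$, giving $L_\xi\subseteq L$.

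Combining both inclusions, $L_\xi = \ker\bar\alpha_P$ for every generic point $\xi$. By Cartier duality, $H_\xi\cong \spec k[M/\ker\bar\alpha_P]\cong\spec k[\operatorname{Im}(\bar\alpha_P)]\cong\mu_n$ where $n = \mu(\bar\alpha_P)$; in particular $\mu(P) = n$. The main technical step is the telescoping identity for $y_u^k$; once the cocycle $\epsilon(u,u')$ is correctly extracted from \eqref{eqn:pdiv}, the rest is a bookkeeping exercise in valuations.
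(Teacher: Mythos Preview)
Your argument is correct and follows a genuinely different route from the paper's. The paper first replaces $X$ by $X'=(T\times X)/H$ to reduce to the torus case, then shrinks $Y$ so that $P$ is the only prime supporting $\D$, chooses a basis of $M$ adapted to $\ker\alpha_P$ to reduce to $M=\ZZ$, and finally reads off the stabilizer order from the proof of Proposition~\ref{prop:p-div}. You instead work directly over the DVR $\CO_{Y,P}$ with the full group $M$ and the twisted multiplication from \eqref{eqn:pdiv}, isolate the ``carry'' cocycle $\epsilon(u,u')\in\{0,1\}$, and determine for every minimal prime over $t$ exactly which generators $y_u$ lie in it via the telescoping identity for $y_u^k$. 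Your approach is more hands-on and avoids the reduction to rank one; the paper's is terser but leans on the earlier construction.

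Two small points deserve a sentence each. First, that $c_{u,u'}$ is a \emph{unit} in $A_0$ (equivalently $\nu_P(f_{z(u,u')})=\alpha_P(z(u,u'))$, not merely $\geq$) is essential for the inclusion $L\subseteq L_\xi$, and it is not automatic from the algebra structure alone; it follows from the construction of $\D$ in Proposition~\ref{prop:p-div}, where for $v\in\CH(T')$ one has $\mathcal{A}'_v f_v^{-1}=\CO_Y(\D(v))$ as fractional ideals, which for $v\in\ker\phi^*$ forces $\div(f_v)=\D(v)$ and hence the exact valuation. Second, the identification of $L_\xi$ with the characters trivial on $H_\xi$ is the standard orbit-map computation for diagonalizable groups: the coaction sends $a\in A_u$ to $a\otimes\chi^u$, so $H_\xi\cong\spec\kappa(\xi)[M]/(\chi^u-1:y_u\notin\mathfrak{p}_\xi)$. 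Strictly speaking this gives $M/\langle L_\xi\rangle$ as the character group, but since your argument establishes $L_\xi=L$ as sets, $L_\xi$ is a subgroup a posteriori and the conclusion stands.
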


\begin{proof}
We can assume that $H=T$, by replacing $X$ by $X'=(T\times X)/H$, which has the same $\D$ and stabilizers as $X$. We may shrink $Y$ until it contains no $P' \neq P$ in the support of $\D$  (i.e., $\alpha_{P'} = 0$ for $P'\neq P$). If $P$ itself is not in the support of $\D$, we see that $X$ is a $T$-torsor over $Y$. In any case, we may shrink $Y$ further so that $Y$ and $X$ are affine with coordinate rings $A_0$ and $A$, respectively, and $P$ is principal. Choosing a basis of $M$ such that all but one basis element is contained in $\ker(\alpha_P)$, we may reduce to the case $M=\ZZ$. But then the stabilizer must be of the form $\mu_{n}$, and by the proof of Proposition \ref{prop:p-div}, $n$ is exactly the smallest integer such that $n \alpha_P P$ is a $\ZZ$-divisor, that is, $n=\mu(\alpha_P)$.
\end{proof}

\begin{lemma}\label{lemma:boundary} 
Let $X$ be an $H$-variety as above, $D$ any prime divisor not intersecting the almost-free locus $X^\circ$, and $H_\redu^0$ the reduced connected component of the identity in $H$. Then the stabilizer of $H_\redu^0$ (and hence of $H$) at the generic point of $D$ is one-dimensional. If $$\rho_D\in \CH(H_\redu^0)^*= \Hom(\CH(H_\redu^0),\ZZ)=\Hom(M,\ZZ)$$ is the associated primitive co-character such that the generic point is attractive under the corresponding one-parameter subgroup, then any non-zero rational function $f$ of weight $u\in M$ vanishes to order $-\rho_D(u)$ on $D$.
\end{lemma}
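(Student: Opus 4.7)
The plan is to reduce to the case where $H$ is a torus and then to carry out a local computation at $\eta_D$ in a $T_0$-invariant affine chart. Since $H$ splits as a product $T_0 \times G$ with $T_0 = H_\redu^0$ and $G$ finite, the almost-free loci for $H$ and $T_0$ coincide, any $H$-semi-invariant of weight $u \in M$ is automatically $T_0$-semi-invariant of weight the image of $u$ in $\CH(T_0)$, and $\Hom(M,\ZZ) = \Hom(\CH(T_0),\ZZ)$ since $\CH(G)$ is torsion; hence I may assume $H = T_0$. Next, $D$ must be $T_0$-invariant: by effectivity the closed $T_0$-invariant subset $Z = X \setminus X^\circ$ has codimension at least one, so the prime divisor $D \subset Z$ is an irreducible component of $Z$, and the connected group $T_0$ must fix each such component setwise.

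To establish the first assertion, I will pick a point $x \in D$ at which $X$ is smooth (possible by normality of $X$) and let $S \subseteq T_0$ be its stabilizer; by assumption $\dim S \ge 1$. The identity component $S^0$ acts on $T_x X$, preserves the subspace $T_x D$, and acts trivially on $T_x D$ since $S^0$ fixes a Zariski-open neighborhood of $x$ in $D$. Thus $S^0$ acts on the one-dimensional normal direction $N_x = T_x X/T_x D$ through a character $\xi$. The key observation is that $\xi$ cannot be trivial: otherwise $S^0$ would act trivially on $T_x X$, hence on a Zariski-open neighborhood of $x$, hence on all of $X$ by connectedness, contradicting the effectivity of the action. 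Therefore $\xi$ embeds $S^0$ into $\GG_m$, forcing $\dim S^0 = 1$.

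For the second assertion, I will apply Sumihiro's theorem to find a $T_0$-invariant affine open neighborhood $U = \spec A$ of $\eta_D$. The prime ideal $\mathfrak{p} \subset A$ of $D \cap U$ is homogeneous for the $M$-grading, and $R := A_\mathfrak{p} = \CO_{X,\eta_D}$ is a DVR with maximal ideal $\mathfrak{m}_R$. Starting from any uniformizer $\pi_0 \in \mathfrak{p}$ and decomposing it into homogeneous components $\pi_0 = \sum \pi_{0,u}$, not every $\pi_{0,u}$ can lie in $\mathfrak{p}^{(2)} = \mathfrak{p}^2 R \cap A$, so some component $\pi := \pi_{0,u}$ is itself a uniformizer; let $\chi \in M$ be its weight. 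Any $T_0$-semi-invariant $f \in k(X)^*$ of weight $u$ then factors as $f = \pi^n g$ with $n = \nu_D(f)$ and $g \in R^\times$ semi-invariant of weight $u - n\chi$. The residue $\bar g \in R/\mathfrak{m}_R = k(D)$ is nonzero and $S^0$-invariant (since $S^0$ fixes $D$ pointwise), forcing $(u - n\chi)|_{S^0} = 0$, i.e., $\rho_D(u) = n\,\rho_D(\chi)$.

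It remains to show $\rho_D(\chi) = -1$. The image of $\pi$ spans the one-dimensional $k(D)$-vector space $\mathfrak{m}_R/\mathfrak{m}_R^2$, on which $S^0$ acts with weight $\chi|_{S^0}$; by the analysis of the second paragraph this weight is primitive in $\CH(S^0)$, so $\langle \rho_D, \chi \rangle \in \{\pm 1\}$. The sign is pinned down by the attractive orientation of $\rho_D$ (that $\rho_D(t) \cdot x_0 \to \eta_D$ as $t \to 0$ for $x_0$ off but close to $D$), yielding $\rho_D(\chi) = -1$ and therefore $\nu_D(f) = -\rho_D(u)$. The main technical obstacle is the effectivity/tangent-space argument of the second paragraph that pins $\dim S^0 = 1$; the rest is routine bookkeeping.
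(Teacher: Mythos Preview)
Your argument is essentially correct and supplies a direct proof, whereas the paper simply defers to \cite[Proposition~3.2]{hausen:10a}. Two small gaps should be tightened.

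First, in the second paragraph you set $S=\mathrm{Stab}(x)$ for a smooth point $x\in D$ and then assert that $S^0$ fixes a neighborhood of $x$ in $D$. This does not follow: if $x$ happens to be a $T_0$-fixed point, for instance, then $S=T_0$ and it need not fix $D$ pointwise near $x$. The fix is to take $S=\mathrm{Stab}(\eta_D)$ from the start (this is in fact what you use in the third paragraph). Since $S$ fixes the generic point of $D$, it acts trivially on all of $D$, hence fixes any smooth point $x\in D$ and acts trivially on $T_xD$. You still have $\dim S\ge 1$ because $\eta_D\notin X^\circ$.

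Second, the step ``$\xi$ nontrivial, therefore $\xi$ embeds $S^0$ into $\GG_m$'' is not valid as written: a~priori $S^0$ could be a higher-dimensional torus and $\xi$ a nontrivial but non-injective character. What your effectivity argument really gives is that $\ker\xi$ acts trivially on $T_xX$ (trivially on $T_xD$ since $\ker\xi\subseteq S^0$, and trivially on $N_x$ by definition), hence trivially on $X$ by the same linearization reasoning, forcing $\ker\xi=1$. This yields both $\dim S^0=1$ and, in the last paragraph, that $\chi|_{S^0}$ is primitive. Note that this step requires the linearization argument to apply to an arbitrary diagonalizable subgroup scheme (possibly finite or non-reduced), not just to the connected $S^0$; this is fine since diagonalizable groups are linearly reductive.
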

\begin{proof}
Both claims follow from the proof of \cite[Proposition 3.2]{hausen:10a}.
\end{proof}

\begin{ex}[Blow up of a flag variety $F(1,1,1)$]
\label{ex:main}
  We consider the variety $W=F(1,1,1)=SL_3/B$ of complete flags in $k^3$. It is well known that $W$ is  isomorphic to the hypersurface \[W=V(x_0z_0 - x_1z_1 + x_2z_2) \subset \PP^2 \times \PP^2.\]
We denote $\GG_m^2$ by $T$ and obtain a $T$-action on $W$ given by the weight matrix
\[
\begin{array}{rrrrrrrl}
  &x_0&x_1&x_2&z_0&z_1&z_2&
\vspace{2mm}\\
 \ldelim({2}{0.5ex}
  &0&1&0&0&-1&0&\rdelim){2}{0.5ex} \\
  &0& 0&1&0&0&-1& \quad.
\end{array}
\]
It is easy to see that the locus $W^\circ$ of finite stabilizers is covered by the two open subsets $U_1 = [x_0x_1x_2 \neq 0]$ and $U_2 = [z_0z_1z_2 \neq 0]$. In particular, there are no divisors contained in $W\setminus W^\circ$. 
We have 
\[ U_1 \cong   T \times \underbrace{V(z_0-z_1+z_2)}_{\cong \PP^1}; \quad U_2\cong \underbrace{V(x_0-x_1+x_2)}_{\cong \PP^1} \times T\]
with the canonical $T$-action on the right-hand-sides. In particular the torus acts with trivial stabilizers on $W^\circ$. The quotient morphisms are both induced by 
 \[\pi:\PP^2 \times \PP^2 \dashrightarrow \PP^1;\quad (x_0:x_1:x_2,\; z_0:z_1:z_2) \mapsto (x_0z_0:x_1z_1).\]
The image of the intersection $U_1 \cap U_2$ under this quotient is $\PP^1 \setminus \{0,1,\infty\}$. Hence, $Y = W^\circ/T$ is the projective line with doubled points $0$,$1$, and $\infty$. 
Let us choose $y_0,y_1$ as coordinates for $\PP^1$. Via the embedding of function fields induced by the dominant morphism $\pi$ we have $y_0 = x_0z_0$ and $y_1 = x_1z_1$. For the structure sheaves of $U_1$ and $U_2$ we obtain
\[\CO_{U_1} = \CO_{\PP^1}[(\nicefrac{x_1}{x_0})^{\pm1}, (\nicefrac{x_2}{x_0})^{\pm1}]\]
 and
\[\CO_{U_2} = \CO_{\PP^1}[(\nicefrac{z_1}{z_0})^{\pm1}, (\nicefrac{z_2}{z_0})^{\pm1}]\]
with generators living in degrees $\pm (0,1)$ and $\pm (1,0)$. We have  $\nicefrac{y_1}{y_0}=\frac{x_1z_1}{x_0z_0}$ and using the equation $x_0z_0 - x_1z_1 + x_2z_2=0$ we obtain $\frac{x_2z_2}{x_0z_0}=\nicefrac{y_1}{y_0}-1$. This gives
\[\frac{z_1}{z_0}=\frac{y_1}{y_0} \cdot \frac{x_0}{x_1},\qquad \frac{z_2}{z_0}=\left(\frac{y_1}{y_0}-1\right)\cdot\frac{x_0}{x_2}.\]
Note, that $\div(\nicefrac{y_1}{y_0}) = [0]-[\infty]$ and $\div(\nicefrac{y_1}{y_0}-1) = [1]-[\infty]$. Hence, 
setting $\D_1(a,b) = 0$ and $\D_2(a,b)= (a+b)\cdot[\infty]-a\cdot[0]-b\cdot[1]$  gives 
\[\CO_{U_1} = \bigoplus_{u\in\ZZ^2} \CO(\D_1(u)), \qquad \CO_{U_2} = \bigoplus_{u\in\ZZ^2} \CO(\D_2(u)).\]
Since  $\D_1(u)$ and $\D_2(u)$ coincide on $\PP^1 \setminus \{0,1,\infty\}$, they induce a divisor $\D(u)$ on the non-separated prevariety $Y$ (which was covered by two instances if $\PP^1$). We obtain
\[W^\circ =U_1 \cup U_2 = \spec_Y \bigoplus_{u\in\ZZ^2} \CO(\D(u)).\]

Now, consider the one-parameter subgroup $\lambda: \GG_m \hookrightarrow T$ acting with weights 
\[
\begin{array}{rrrrrrrl}
  &x_0&x_1&x_2&z_0&z_1&z_2&
\vspace{2mm}\\
 (&0& 0&1&0&0&-1&).
\end{array}
\]
 The fixed point set of these action consists of two connected components: the lines $(0:0:1,*:*:0)$ and $(*:*:0,0:0:1)$, which are in fact both $T$-invariant. The first one contains sources and the second one contains sinks of the $\GG_m$-action, which is free in a neighborhood of these sets.
A local calculation shows that the exceptional divisors of the blowup $\widetilde W \to W$ in these lines consist of $\lambda$-fixed points, as well. 
In particular we have $\widetilde W^\circ = W^\circ$  and we then obtain two prime divisors $D_+$ and $D_-$ in $\widetilde W \setminus \widetilde W^\circ$. Lemma~\ref{lemma:boundary} implies that $\rho_{D_+} = (0,1)$ and  $\rho_{D_-} = -(0,1)$ holds. 
We obtain $P_W = M_\QQ$ and $P_{\widetilde W} = \{(a,b) \in M_\QQ \mid -1 \leq b \leq 1\}$. The boundary divisor $\Delta$ is trivial in both cases (since $\D(u)$ was integral).

We continue this example and discuss the $F$-splitting and $F$-regularity of $W$ and $\widetilde W$ in Example \ref{ex:bl1}.
\end{ex}

\begin{ex}[Cyclic covers]\label{ex:cyclic}
Set $H=\mu_n \subseteq \mathbb{G}_m = T$, and $X$ be an $H$-variety satisfying \eqref{eqn:cover-condition}. In this case, $M=\ZZ/n\ZZ$, and we choose the ``elementary school arithmetic'' section $\ZZ/n\ZZ\to \ZZ$ with image in $[0, n-1)$. Then Theorem~\ref{thm:p-div} states that
\[ X \cong \spec_Y \bigoplus_{i=0}^{n-1} \CO_Y\left(\left\lfloor \frac{i}{n} D \right\rfloor\right)   \]
for some divisor $D$ on $Y$, with multiplication of the $i$-th and $j$-th graded piece defined by the usual product if $i+j<n$, and using division (``carrying'') by a section $g$ of $\CO_Y(D)$ if $i+j\geq n$ (in which case $z(i,j)=n$). This can be seen in an elementary way if $X=\spec A$, $A=\bigoplus_{i=0}^{n-1} A_i$ is affine: let $f_1\in A_1$ be a nonzero element, and let $g = f_1^n\in A_0$. This defines a homomorphism $A_0[t]/(t^n-g)\to A$ sending $t$ to $f_1$, inducing an isomorphism of fraction fields, and hence identifying $A$ with the integral closure of $A_0$ in ${\rm Frac}(A_0)(g^{1/n})$. This also gives us maps $A_i\to {\rm Frac}(A_0)$ sending $f$ to $f/f_1^i$, and it is easily seen that the image is $\{h\in {\rm Frac}(A_0)\,:\, n\div(h) + i\div(g)\geq 0\}$. If we define $\D(i) = \frac{i}{n}\cdot \div (g)$, we now get the desired isomorphisms $\tilde A_i \cong \CO_Y(\lfloor \D(i)\rfloor)$. Moreover, we have $g_{i, j} = g$ if $i+j\geq n$, $g_{i,j}=1$ otherwise. 

Suppose that the divisor $D=\div (g)$ is reduced, so that $X= \spec A[t]/(t^n-g)$. Then $X'=\spec A[t, q, q^{-1}]/(t^n - gq)$ where $t$ has weight $1$ and $q$ has weight $n$, and the map $A[t, q, q^{-1}]/(t^n - gq)\to A[t]/(t^n-g)$ sends $q$ to $1$. The stabilizer at a point of $X'$ mapping to $D$ is $H=\mu_n$. In particular, if $n$ is divisible by the characteristic of $k$, this gives an example of a $T$-variety with a point whose stabilizer is non-reduced.
\end{ex}

\section{Preliminaries on Frobenius}\label{sec:frob}
We fix now a prime $p$, and assume that our algebraically closed field $k$ has characteristic $p$. Let $X$ be a $k$-scheme. 
By $F_X:X\to X$ (or simply $F$) we denote the \emph{absolute Frobenius} of $X$, that is, the identity map on the underlying topological space and the $p$-th power map $F^* : \CO_X \to F_* \CO_X = \CO_X$ on the structure sheaf. This means that $F_* \CO_X$ is just $\CO_X$ as a sheaf of rings, but has an $\CO_X$-module structure defined by $x\ast f = x^p f$. 

\begin{defn}  Let $X$ be a $k$-scheme.\label{defn:frob}
\begin{enumerate}[(1)]
  \item (Mehta--Ramanathan \cite{mehta:85a}, see also \cite[\S 1.1]{brion:05a}) A \emph{Frobenius splitting (or \emph{$F$-splitting})} of $X$ is an $\CO_X$-linear map $\sigma: F_* \CO_X\to \CO_X$ satisfying $\sigma \circ F^* = id$. 
  \item We say that an $F$-splitting $\sigma$ is \emph{compatible} with a closed subscheme $Z\subseteq X$ defined by a sheaf of ideals $\CI_Z$ if $\sigma(F_* \CI_Z) \subseteq \CI_Z$.
  \item ({Ramanan--Ramanathan, cf. \cite[1.4.1]{brion:05a}}) Assume $X$ is normal and let $D$ be an effective divisor on $X$, giving rise to a reflexive sheaf $\CO_X(D)$ and a section $s:\CO_X\to \CO_X(D)$. We say that a $F$-splitting $\sigma: F_{*} \CO_X\to \CO_X$ is a \emph{$D$-splitting} if it extends along $s$ to a map $F_{*} (\CO_X(D))\to \CO_X$. 
  \item Assume that $X$ is normal, and let $\Delta$ be an effective $\QQ$-divisor on $X$. By an $F$-splitting of the pair $(X, \Delta)$ we mean a $D$-splitting of $X$, where $D = \lceil (p-1) \Delta \rceil$.
\item ({\cite[Definition 3.1]{schwede:10a}}) Assume that $X$ is normal, $\Delta$ an effective $\QQ$-divisor on $X$. We say $(X,\Delta)$ is \emph{$F$-regular}\footnote{In \cite{schwede:10a}, what we call $F$-regularity is called \emph{global} $F$-regularity.} if for every effective divisor $D>0$, there exists some $e>0$ such that the map $\CO_X\to F_*^e \CO_X(\lceil (p^e-1) \Delta \rceil + D)$ splits as a map of $\CO_X$ modules. We say that $X$ is $F$-regular if $(X,0)$ is $F$-regular.
\end{enumerate}
\end{defn}

Note that if a pair $(X,\Delta)$ is $F$-regular, then it is automatically $F$-split.
The following theorem provides a useful criterion for checking $F$-regularity:

\begin{thm}[{\cite[Theorem 3.9]{schwede:10a}}]\label{thm:schwede}
The pair $(X,\Delta)$ is $F$-regular if and only if there exists an effective divisor $C>0$ on $X$ satisfying the following two properties: 
\begin{enumerate}
\item There exists an $e>0$ such that the map 
 $$\CO_X\to F_*^e \CO_X(\lceil (p^e-1) \Delta \rceil + C)$$
splits.
\item The pair $(X\setminus C, \Delta_{X\setminus C})$ is $F$-regular.
\end{enumerate}
\end{thm}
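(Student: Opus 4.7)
The plan is to handle the two directions separately. The forward direction is straightforward: given $(X,\Delta)$ globally $F$-regular, I would pick any effective divisor $C > 0$. Condition~(1) is then just the $F$-regularity condition applied with $D = C$, while condition~(2) reduces to the fact that $F$-regularity passes to open subsets: any effective divisor $D_0$ on $X\setminus C$ extends to an effective divisor $\bar{D_0}$ on $X$, and the splitting of $\CO_X \to F^e_*\CO_X(\lceil(p^e-1)\Delta\rceil + \bar{D_0})$ restricts to a splitting on $X\setminus C$.

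For the reverse direction, I would fix an arbitrary effective divisor $D > 0$ on $X$ and seek a splitting of $\CO_X \to F^{e'}_*\CO_X(\lceil(p^{e'}-1)\Delta\rceil + D)$ for some $e'$. The strategy is to combine two ingredients. From condition~(2), there is some $e_1 > 0$ and a splitting $\sigma_1 \colon F^{e_1}_*\CO_{X\setminus C}(\lceil(p^{e_1}-1)\Delta|_{X\setminus C}\rceil + D|_{X\setminus C}) \to \CO_{X\setminus C}$. Using normality of $X$ and the standard extension-with-poles principle for coherent sheaves, $\sigma_1$ extends to an $\CO_X$-linear map $\tilde\sigma_1 \colon F^{e_1}_*\CO_X(\lceil(p^{e_1}-1)\Delta\rceil + D) \to \CO_X(MC)$ for some $M \geq 0$, and the condition $\tilde\sigma_1(1) = 1$ is forced from the same property on $X\setminus C$ together with normality. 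From condition~(1), iterating the given splitting $\phi$ via the standard tensor-and-compose construction produces, for each $j \geq 1$, a splitting
\[
\phi^{[j]} \colon F^{je_0}_*\CO_X\bigl(\lceil(p^{je_0}-1)\Delta\rceil + N_j C\bigr) \to \CO_X, \qquad N_j = \tfrac{p^{je_0}-1}{p^{e_0}-1},
\]
with $N_j \to \infty$ as $j \to \infty$.

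To merge the two, I would pick $j$ with $N_j \geq M$, twist $\tilde\sigma_1$ by $\CO_X(\lceil(p^{je_0}-1)\Delta\rceil)$ so as to inflate the $\Delta$-content on the source, apply $F^{je_0}_*$, and post-compose with the restriction of $\phi^{[j]}$ to the subsheaf $F^{je_0}_*\CO_X(MC + \lceil(p^{je_0}-1)\Delta\rceil)$, which is contained in the natural source of $\phi^{[j]}$ by the choice of $j$. The outcome is a $\CO_X$-linear map from $F^{e_1+je_0}_*\CO_X\bigl(\lceil(p^{e_1}-1)\Delta\rceil + p^{e_1}\lceil(p^{je_0}-1)\Delta\rceil + D\bigr)$ to $\CO_X$ sending $1$ to $1$. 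The arithmetic identity $(p^{e_1}-1) + p^{e_1}(p^{je_0}-1) = p^{e_1+je_0}-1$, combined with the integrality of the two ceiling divisors, guarantees that this source dominates $\lceil(p^{e_1+je_0}-1)\Delta\rceil + D$; restricting to that subsheaf then produces the required splitting at level $e_1+je_0$.

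The hard part will be the extension step: justifying that $\sigma_1$ on $X\setminus C$ extends to $\tilde\sigma_1$ with bounded pole order $M$ along $C$ and that the value on $1$ is preserved. This rests on normality of $X$ and the principle that sections of a coherent sheaf on $X\setminus C$ extend to rational sections of a fixed bounded pole order. Once this is in hand, everything else is essentially formal bookkeeping of divisors via ceiling inequalities and the projection formula.
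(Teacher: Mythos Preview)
The paper does not give a proof of this theorem; it is simply quoted from \cite[Theorem~3.9]{schwede:10a} and used as a black box (in Lemma~\ref{lemma:logsmooth}, Lemma~\ref{lemma:fregboundary}, and the proof of Theorem~\ref{thm:compone}). So there is nothing in the paper to compare your argument against.

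That said, your sketch is essentially the argument given in \cite{schwede:10a}. One comment on the step you flag as ``hard'': the cleanest way to justify the extension of $\sigma_1$ with bounded pole order along $C$ is not to invoke a general ``extension-with-poles principle for coherent sheaves'' (which, as stated, needs $C$ to have a Cartier multiple), but to use the duality of Remark~\ref{rem:duality}. Under that identification, $\sigma_1$ corresponds to an effective divisor on $X\setminus C$ linearly equivalent to $\lfloor(1-p^{e_1})(K_X+\Delta)\rfloor - D$ restricted to $X\setminus C$; taking its closure in $X$ and comparing divisor classes produces the integer $M$ directly. This also makes the preservation of $\tilde\sigma_1(1)=1$ automatic, since on the regular locus the map is determined by its divisor and the two agree on the dense open $X\setminus C$. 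With that adjustment, the bookkeeping you describe (iterating $\phi$ to absorb $MC$, the ceiling inequality coming from $(p^{e_1}-1)+p^{e_1}(p^{je_0}-1)=p^{e_1+je_0}-1$, and restricting to the subsheaf) goes through exactly as you say.
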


\begin{rem}\label{rem:psplit}
Suppose that a normal scheme is $D$-split for some effective divisor $D=\sum a_P P$. Then $a_P< p$ for all $P$. In particular, if $(X, \Delta)$ is $F$-split for a $\QQ$-divisor $\Delta=\sum b_P P$, then $b_P \in [0, 1]$ for all $P$. Indeed, if $D'\leq D$ is an effective divisor and $X$ is $D$-split, then it is $D'$-split as well, so the claim is that $X$ cannot be $D$-split for $D= p P$ with a single prime divisor $P$. Shrinking $X$, we can moreover assume that $P$ is Cartier. In this situation, $F_* \CO_X(D) = F_* \CO_X(pP) = F_* (F^* \CO_X(P)) = (F_* \CO_X)\otimes \CO_X(P)$ by the projection formula. Using this identification, $\CO_X\to F_* \CO_X(D)$ is the composition of the canonical section $s_P:\CO_X\to\CO_X(P)$ and $F^*\otimes id: \CO_X(P) = \CO_X\otimes\CO_X(P)\to (F_*\CO_X)\otimes\CO_X(P)= F_*\CO_X(D)$. Thus if $\CO_X \to F_* \CO_X(D)$ splits, so does $s_P:\CO_X\to \CO_X(P)$, which is impossible. 
\end{rem}

\begin{lem} \label{lem:dsplit}
  Let $X$ be an integral normal $k$-scheme, $K$ its function field, $D = \sum a_P P$ a divisor on $X$, and $\sigma_K: F_*^e K \to K$ a $K$-linear map. Denote by $\nu_P$ the valuation of $K$ of $X$ corresponding to a prime divisor $P$. Then $\sigma_K$ restricts to a map
$F_*\CO_X(D)\to \CO_X$
if and only if for all prime divisors $P$ on $X$
  \[ \nu_P(f) \geq -a_P + p^e \quad \Rightarrow \quad \nu_P(\sigma_K(f)) \geq 1 \quad \text{for all } f\in K.   \]
\end{lem}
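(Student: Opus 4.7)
The plan is to reduce the question to a pointwise check at codimension-one primes using the normality of $X$, and then rewrite that pointwise condition into the stated form using $K$-linearity of $\sigma_K$ and a choice of uniformizer at each prime.

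First, I will use normality to get the standard pointwise description of the relevant reflexive sheaves: for an open $U\subseteq X$,
\[
\CO_X(U) = \bigcap_{P\cap U\neq\emptyset} \CO_{X,P}, \qquad \CO_X(D)(U) = \bigcap_{P\cap U\neq\emptyset} \CO_{X,P}(D),
\]
where the intersections range over prime divisors $P$ of $X$ meeting $U$; concretely $\CO_X(D)(U)=\{f\in K : \nu_P(f)\geq -a_P \text{ for every such } P\}$. Consequently, $\sigma_K$ restricts to a morphism $F_*^e\CO_X(D)\to \CO_X$ if and only if $\sigma_K(\CO_{X,P}(D))\subseteq \CO_{X,P}$ for every prime divisor $P$, that is, if and only if for every $P$ the implication
\[
\nu_P(f)\geq -a_P \;\Longrightarrow\; \nu_P(\sigma_K(f))\geq 0 \quad \text{for all } f\in K
\tag{$\ast$}
\]
holds.

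Next, I will verify that $(\ast)$ is equivalent to the condition in the lemma. Fix $P$ and a uniformizer $g\in \CO_{X,P}$, so $\nu_P(g)=1$. Since $\sigma_K$ is $K$-linear with respect to the Frobenius-twisted module structure on $F_*^e K$, we have $\sigma_K(g^{p^e}h)=g\cdot\sigma_K(h)$ for every $h\in K$. The substitution $f=g^{p^e}h$ is a bijection of $K$ satisfying $\nu_P(f)=\nu_P(h)+p^e$ and $\nu_P(\sigma_K(f))=\nu_P(\sigma_K(h))+1$. It therefore carries $(\ast)$ exactly onto the implication $\nu_P(f)\geq -a_P+p^e \Rightarrow \nu_P(\sigma_K(f))\geq 1$, and vice versa.

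The argument is essentially bookkeeping once one sees the two moves: reflexivity gives the local-at-$P$ reduction, and $K$-linearity turns the ``natural'' bounds $(-a_P,0)$ into the shifted bounds $(-a_P+p^e,1)$ stated in the lemma. The only place requiring a moment of care is keeping track of the Frobenius twist on $F_*^e K$; the shifted form is in fact the convenient one because it fits directly with the divisors $\lceil (p^e-1)\Delta\rceil+D$ that arise in Theorem~\ref{thm:schwede} and in the subsequent analysis of $F$-regularity.
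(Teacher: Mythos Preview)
Your proof is correct and follows essentially the same approach as the paper: first reduce to the local condition $\nu_P(f)\geq -a_P \Rightarrow \nu_P(\sigma_K(f))\geq 0$ using normality, then use the Frobenius-linearity $\sigma_K(g^{p^e}h)=g\,\sigma_K(h)$ with a uniformizer to shift both thresholds by $p^e$ and $1$ respectively. The only cosmetic difference is that the paper chooses $g_P$ with $\nu_P(g_P)=-1$ rather than a uniformizer with valuation $+1$.
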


\begin{proof} 
As $\CO_X(D)$ can be identified with the sheaf of rational functions with poles of order $\leq a_P$ along each prime divisor $P$, we see that $\sigma_K$ restricts as desired if and only if $\nu_P(f) \geq -a_P \Rightarrow \nu_P(\sigma_K(f)) \geq 0$. Since $\sigma_K(g^{p^e} f) = g\sigma_K(f)$, substituting $f{g^{p^e}}_P$ for $f$ where $\nu_P(g_P) = -1$ yields the desired result. 
\end{proof}

\begin{rem}\label{rem:duality}
When we calculate examples, it will often be convenient to relate $F$-splittings to sections of the $(p-1)$-st power of the anticanonical sheaf. Let $X$ be normal, $\Delta$ an effective $\QQ$-divisor, and $D$ any divisor on $X$. If $U\subseteq X$ is the smooth locus, the relative dualizing sheaf of $F^e:U \to U$ is $\omega_{U}\otimes (F^e)^* \omega_{U}^{-1} = \omega_{U}^{1-p^e}$.
By Grothendieck duality, we have for any $e\geq 0$ an $\CO_U$-linear isomorphism 
$$
\SHom_{\CO_U}(F_*^e \CO_U(\lceil (p^e-1)\Delta+D\rceil),\CO_U) \cong \CO_U(\lfloor(1-p^e)(K_U+\Delta)-D\rfloor),
$$
Using the $S_2$-property, we can push this isomorphism forward to $X$, see e.g. \cite[Remark 2.5]{schwede:10a}. Taking global sections, we obtain an identification
$$
\Hom_{\CO_X}(F_*^e \CO_X(\lceil (p^e-1)\Delta+D\rceil),\CO_X) \cong H^0\left(X,\CO_X(\lfloor(1-p^e)(K_X+\Delta)-D\rfloor)\right).
$$

\end{rem}
\begin{ex}\label{ex:toricsplit}
If $X$ is a toric variety defined by a fan $\Sigma$, then $-K_X$ can be chosen to be the complement of the open orbit, in which case a basis for its sections is given by monomials $\chi^{-u}$, where $u$ is a lattice point in the polytope
$$
P_X=\{u\in M_\QQ\ |\ \rho(u)\leq 1\ \forall\rho\in\Sigma{(1)}\}.
$$
Here, $M$ is the character lattice of the torus acting on $X$, $\Sigma^{(1)}$ is the set of rays of $\Sigma$, and $\rho(u)$ denotes the value of the primitive generator of $\rho$ on $u$. By the above remark, Laurent polynomials $\sum_{u\in M\cap (p-1)P_X}a_u\chi^u$ correspond to maps $F_* \CO_X\to \CO_X$. For such a map to be a splitting, the coefficient of $\chi^0$ must be equal to one; this condition is also sufficient if $X$ is complete \cite{payne:09a}. See also Lemma~\ref{lemma:invariant}.
\end{ex}

\section{Torus Actions and Frobenius}\label{sec:tfrob}

Consider the setup and notation of \S\ref{ss:setup}, and assume that $H$ has no $p$-torsion (see Remark~\ref{rem:with-p-torsion} below for what we can say without this assumption). Our main result on Frobenius splittings and $F$-regularity is the following:
\begin{thm}\label{thm:main}
The $H$-variety $X$ is $F$-split ($F$-regular) if and only if $(Y,\Delta)$ is $F$-split ($F$-regular).
\end{thm}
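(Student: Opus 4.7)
The plan is to establish a bijection between $H$-invariant $F$-splittings of $X$ and $F$-splittings of the pair $(Y,\Delta)$, and then use linear reductivity of the diagonalizable group $H$ to reduce arbitrary $F$-splittings of $X$ to $H$-invariant ones.

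\textbf{Reduction to invariants.} Since $H$ is diagonalizable, the $H$-module $\Hom_{\CO_X}(F_*\CO_X, \CO_X)$ decomposes into $H$-weight eigenspaces; equivalently, via Grothendieck duality (Remark~\ref{rem:duality}), $H^0(X, \omega_X^{1-p})$ decomposes into weight eigenspaces under the natural $H$-linearization of $\omega_X$. The condition that such a section defines an $F$-splitting is the weight-$0$ equation $\sigma(1) = 1$, so a section is a splitting if and only if its weight-$0$ component is. Consequently $X$ is $F$-split if and only if it admits an $H$-invariant $F$-splitting, and the same averaging applies to splittings of $(X, D)$ for any $H$-invariant effective divisor $D$.

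\textbf{Identification on the quotient.} The main work is to identify these invariant splittings of $X$ with $F$-splittings of $(Y,\Delta)$. Restricting to $X^\circ$, the ramification formula together with the description $\mathcal{A}_u = \CO_Y(\lfloor \D(s(u))\rfloor)\chi^u$ from Theorem~\ref{thm:p-div} yields an isomorphism
\[
(\pi_*\omega_{X^\circ}^{1-p})_0 \;\cong\; \omega_Y^{1-p}\bigl(-\lceil(p-1)\Delta\rceil\bigr).
\]
The key arithmetic input is that at a prime $P \subseteq Y$ with $n = \mu(P)$, a weight-$0$ section of $\pi^*\omega_Y^{1-p}$ vanishing to order $\geq (p-1)(n-1)$ on $\pi^{-1}(P)_{\mathrm{red}}$ descends to one on $Y$ vanishing to order $\geq \lceil(p-1)(n-1)/n\rceil$, which is exactly the coefficient of $P$ in $\lceil(p-1)\Delta\rceil$ (the hypothesis $p \nmid n$ keeps this clean). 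Applying Grothendieck duality on $Y$ then identifies these sections with $F$-splittings of $(Y,\Delta)$, and one verifies that the splitting condition transfers via a local computation at the generic point of $Y$ (where $\pi$ is a torsor). Finally, one must check that an $H$-invariant section on $X^\circ$ extends to $X$: by Lemma~\ref{lemma:boundary}, the weight-$0$ condition together with the local form of $\omega_X^{1-p}$ near a divisor $D \subseteq B = X\setminus X^\circ$ forces any such section to have non-negative order along $D$.

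\textbf{$F$-regularity and main obstacle.} For $F$-regularity, one applies Schwede's criterion (Theorem~\ref{thm:schwede}) and transfers between $X$ and $(Y,\Delta)$ using the same bijection, now for maps $\CO_X \to F_*^e\CO_X(\lceil(p^e-1)\Delta\rceil + D)$: an effective test divisor on $X$ can be averaged (and enlarged) to an $H$-invariant one, which corresponds to an effective divisor on $Y$ to be absorbed into the ceiling, and conversely; the inductive restriction step is compatible since $H$-invariant open subschemes of $X$ descend to open subschemes of $Y$ respecting $\Delta$. The main obstacle will be the arithmetic and equivariance bookkeeping in the second paragraph: carefully matching the Frobenius-twisted $H$-equivariant structure on $F_*\CO_X$ against the standard $H$-action on $\omega_X$ under Grothendieck duality, and verifying that the floors and ceilings of $\D$ and $\Delta$ combine as claimed on both sides.
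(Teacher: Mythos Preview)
Your overall architecture matches the paper's: reduce to $H$-invariant splittings (your first step is exactly Lemma~\ref{lemma:invariant}), identify invariant splittings on $X^\circ$ with splittings of $(Y,\Delta)$, extend from $X^\circ$ to $X$ (your argument is Lemma~\ref{lemma:extends}), and handle $F$-regularity via Theorem~\ref{thm:schwede}. The one substantive divergence is in the identification step.

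For the core isomorphism, the paper proves Lemma~\ref{lemma:piotr} by a bare-hands valuation computation: it writes down a putative splitting in weight $u$ as a map $F_*^e K \to K$ on $K = k(Y)$, and checks directly, prime by prime, that regularity on $X^\circ$ is equivalent to the divisorial condition $\lceil (p^e-1)\Delta + \D(s(u))\rceil + E$ on $Y$, via a chain of floor/ceiling manipulations. You instead propose to go through Grothendieck duality and the ramification/canonical-bundle formula to produce $(\pi_*\omega_{X^\circ}^{1-p})_0 \cong \omega_Y^{1-p}(-\lceil(p-1)\Delta\rceil)$, then check compatibility with trace. The paper explicitly discusses this alternative in the Remark following Lemma~\ref{lemma:piotr}: it is the conceptual reason the result was expected, and is the route taken in \cite{schwede:10b} for finite covers; the authors chose the direct computation precisely to avoid having to verify that the section-level identification respects the Frobenius trace map. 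Your own ``main obstacle'' paragraph names exactly this point, so you are aware of what you would owe. Either route works; the paper's buys a self-contained argument at the cost of opacity, yours buys conceptual clarity at the cost of a duality-compatibility check you have not written down.

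Two small points. First, your ``a section is a splitting if and only if its weight-$0$ component is'' is slightly too strong as stated (see the converse clause in Lemma~\ref{lemma:invariant}, which needs $\sigma_u=0$ for nonzero $u\in pM$); however the implication you actually use (splitting $\Rightarrow$ weight-$0$ part is a splitting) is correct and suffices. Second, for $F$-regularity the paper separates off the passage from $X^\circ$ to $X$ as Lemma~\ref{lemma:fregboundary}, showing directly that an invariant splitting of $X^\circ$ already splits $\CO_X\to F_*\CO_X(B)$; your sketch folds this into the general transfer, which is fine but that step deserves its own sentence.
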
 

We start by endowing the sheaves $F_* \CO_X$ and $\SHom(F_* \CO_X, \CO_X)$ with an $H$-equivariant structure. This is rather straight-forward, but can cause some confusion, as we work with the absolute Frobenius morphisms, which are \emph{not} morphisms of $k$-schemes. 
To remedy this, one usually introduces the relative ($k$-linear) Frobenius morphisms $F_{X/k}:X\to X'$ where $X'=X\otimes_{k, F_k} k$ is the ``Frobenius twist'' of $X$. On the other hand, in commutative algebra and in the literature on $F$-splittings and $F$-singularities, it is customary to work with the absolute Frobenius morphisms, and indeed it would be annoying to have to keep track of the various twists of everything in sight, especially since we will be interested in iterates of the Frobenius. 

Fortunately, in our situation the group $H=\spec k[M]$ is naturally defined over $\mathbb{F}_p$ (that is, we are given an $\mathbb{F}_p$-group scheme $H_0=\spec \FF_p[M]$ and an isomorphism $H\cong H_0\otimes_{\mathbb{F}_p} k$). We can now view the action of $H$ on $X$ over $k$ as an action of $H_0$ on $X$ considered as an $\mathbb{F}_p$-scheme. The Frobenius $F_{H_0}:H_0\to H_0$ is simply the multiplication by $p$ map on the group scheme, and induces the multiplication by $p$ map on $M$. From the point of view of $H_0$, an iterate of the absolute Frobenius $F^e_X:X\to X$ is $F^e_{H_0}$-equivariant. In particular, the push-forward $F^e_* \CO_X$ has a canonical $H_0$-equivariant structure, \emph{when we view $X$ as an $H_0$-scheme with $H_0$ acting via $F^e_{H_0}$}. In particular, as $\ker(F_H^e)$ acts trivially on $X$ in this action, the push-forward decomposes as $F_*^e \CO_X=\bigoplus_{u\in M/p^eM} (F_*^e \CO_X)_u$. 

If $X=\spec A$ is affine, with $A=\bigoplus_{u\in M} A_u$, then the twisted action corresponds to the grading $A=\bigoplus_{u\in M} A_{u/p^e}$, with the convention that $A_{u/p^e}=0$ if $u$ is not a multiple of $p^e$ (note the absence of $p$-torsion in $M$). The push-forward $F^e_* \CO_X$ corresponds to $A$ with the usual grading, and for $u\in M$ the $u$-graded piece of the graded module $\Hom(F^e_* \CO_X, \CO_X)$ consists of $\sigma:A\to A$
satisfying 
\[ \sigma(f^{p^e} g) = f\sigma(g) \quad \text{ and }\quad
 \sigma(A_{u'}) \subseteq  A_{(u'-u)/p^e}. \]

The isomorphism \eqref{eqn:sinv} of \S \ref{ss:setup} induces for all $u\in M$ homomorphisms of $k(Y)$-vector spaces:
\begin{equation}\label{eqn:rationaliso}
  \sigma\mapsto\bar\sigma:\Hom_{k(X)}(F_*^ek(X)^{\mathrm{s-inv}},k(X)^{\mathrm{s-inv}})_u \to  \Hom_{k(Y)}(F_*^ek(Y),k(Y)),
\end{equation}
defined by $\bar\sigma(f) = \sigma(f\cdot\chi^u)_0\in k(Y)$, the degree $0$ part of $\sigma(f\cdot \chi^u)$
with respect to the $M$ grading.

\begin{lemma} \label{lemma:rationaliso}
The homomorphisms \eqref{eqn:rationaliso} are isomorphisms.
\end{lemma}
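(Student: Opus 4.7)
\noindent\emph{Plan.} The key structural observation is that both Hom spaces are naturally $k(Y)$-vector spaces and that, via \eqref{eqn:sinv}, $k(X)^{\mathrm{s-inv}} = \bigoplus_{v\in M} k(Y)\chi^v$ is $M$-graded as a $k(Y)$-module. My plan is to show that a weight-$u$ map $\sigma$ is completely pinned down by its restriction to the single graded piece $k(Y)\chi^u \subset F_*^e k(X)^{\mathrm{s-inv}}$, and that this restriction is nothing but the map $\bar\sigma: F_*^e k(Y) \to k(Y)$.

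\smallskip
\noindent\emph{Injectivity.} First, I would check that any weight-$u$ element $\sigma$ must vanish on $k(Y)\chi^v$ whenever $v \notin u + p^e M$, since the condition $\sigma(A_{u'}) \subseteq A_{(u'-u)/p^e}$ together with the absence of $p$-torsion in $M$ forces this. Second, using the twisted multiplication $\chi^u\chi^{u'} = g_{u,u'}^{-1}\chi^{u+u'}$, an iterated computation shows that for every $w \in M$ there exists a scalar $c(u,w) \in k(Y)^*$ (an explicit product of values $g_{\cdot,\cdot}$) with
\[
	\chi^{u+p^ew} \;=\; c(u,w)\,\chi^u\,(\chi^w)^{p^e}.
\]
Hence by $k(X)^{\mathrm{s-inv}}$-linearity,
\[
	\sigma(f\chi^{u+p^ew}) \;=\; \chi^w\cdot \sigma\bigl(c(u,w)f\chi^u\bigr).
\]
Since $\sigma(c(u,w)f\chi^u)$ lives in weight $0$, hence in $k(Y)$, it equals $\bar\sigma(c(u,w)f)$ by definition. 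Therefore $\sigma$ is determined by $\bar\sigma$, giving injectivity.

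\smallskip
\noindent\emph{Surjectivity.} For the converse, given any $\tau \in \Hom_{k(Y)}(F_*^e k(Y), k(Y))$ I would define
\[
	\tilde\sigma\bigl(f\chi^{u+p^ew}\bigr) \;=\; \tau\bigl(c(u,w)f\bigr)\,\chi^w
\]
and $\tilde\sigma(f\chi^v) = 0$ for $v \notin u + p^eM$. The only nontrivial verification is $k(X)^{\mathrm{s-inv}}$-linearity, i.e.\ $\tilde\sigma(a^{p^e}\xi) = a\tilde\sigma(\xi)$ for $a = h\chi^{w'} \in k(X)^{\mathrm{s-inv}}$; expanding both sides using the cocycle $g$ and comparing the resulting scalars reduces this to the 2-cocycle identity for $g$, combined with the $F^e$-linearity $\tau(h^{p^e}\cdot -) = h\tau(-)$ over $k(Y)$. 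By construction $\overline{\tilde\sigma} = \tau$, completing the bijection.

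\smallskip
\noindent\emph{Expected obstacle.} The genuinely delicate part is the cocycle bookkeeping: keeping track of the factors $c(u,w)$ and $g_{\cdot,\cdot}$ so that the formula for $\tilde\sigma$ is well defined and module-linear. When $H = T$ is a torus we may take $g \equiv 1$, and then $k(X)^{\mathrm{s-inv}} = k(Y)[M]$ is an honest Laurent polynomial ring, the factors $c(u,w)$ all equal $1$, and the argument collapses to an essentially trivial graded-module computation. The content of the lemma is thus concentrated in verifying that the twisting by $g$ does not obstruct this simple picture.
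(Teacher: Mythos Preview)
Your proposal is correct and follows the same strategy as the paper's proof: exhibit the inverse $\bar\sigma \mapsto \sigma$ explicitly. The paper's one-line proof simply writes $\sigma(f\cdot\chi^{u'}) = \bar\sigma(f)\cdot\chi^{(u'-u)/p^e}$, omitting the cocycle factors you call $c(u,w)$. That formula is literally correct only when the twisted multiplication is trivial (e.g.\ $H=T$, where $g\equiv 1$); in general your version $\tilde\sigma(f\chi^{u+p^ew}) = \tau(c(u,w)f)\chi^w$ with $c(u,w)=\chi^{u+p^ew}/(\chi^u(\chi^w)^{p^e})$ is what is needed for $k(X)$-linearity, and this is exactly the factor the paper itself writes down later in Remark~4.4 (there called $\lambda_{u,w}$) when revisiting the lemma. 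So your ``expected obstacle'' is real and you have handled it; your injectivity/surjectivity argument is otherwise identical in spirit to the paper's, just more carefully executed.
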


\begin{proof}
The inverse map $\bar\sigma \mapsto \sigma$ is defined by 
\[ \sigma(f\cdot \chi^{u'}) = \sigma(f)\cdot \chi^{(u'-u)/p^e}, \]
if $u'-u \in p^e M$, and zero otherwise.
\end{proof}

The following lemma allows us to relate $F$-splittings of $X^\circ$ to $F$-splittings of $(Y, \Delta)$:

\begin{lemma}\label{lemma:piotr}
Let $E$ be any divisor on $Y$, with pullback $\widetilde{E}$ to $X^\circ$. Then the isomorphism \eqref{eqn:rationaliso} induces isomorphisms
$$\Hom_{\CO_{X^\circ}}(F_*^e\CO_{X^\circ}(\widetilde{E}),\CO_{X^\circ})_u\cong  \Hom_{\CO_Y}(F_*^e\CO_Y(\lceil (p-1)\Delta+\D(s(u)) \rceil+E),\CO_Y).$$

\end{lemma}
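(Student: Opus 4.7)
The plan is to reduce the statement to a prime-by-prime valuation matching problem, using Lemma \ref{lemma:rationaliso} to identify maps at the function field level and Lemma \ref{lem:dsplit} to characterize when they extend globally. By Lemma \ref{lemma:rationaliso}, maps $\sigma$ in the weight-$u$ piece on the left correspond bijectively to $k(Y)$-linear maps $\bar\sigma: F_*^e k(Y)\to k(Y)$. Any sheaf homomorphism $F_*^e\CO_{X^\circ}(\widetilde E)\to\CO_{X^\circ}$ of weight $u$ is determined by its action on semi-invariant rational sections, so one can view it as an element of $\Hom_{k(X)}(F_*^e k(X)^{\mathrm{s-inv}},k(X)^{\mathrm{s-inv}})_u$. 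The task thus reduces to showing that $\sigma$ extends to a map $F_*^e\CO_{X^\circ}(\widetilde E)\to\CO_{X^\circ}$ if and only if the corresponding $\bar\sigma$ extends to a map $F_*^e\CO_Y(\lceil (p^e-1)\Delta+\D(s(u))\rceil+E)\to\CO_Y$.

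Next, I apply Lemma \ref{lem:dsplit} on both sides. Using that $\pi: X^\circ\to Y$ is a geometric quotient with finite stabilizers, prime divisors of $X^\circ$ are in bijection with prime divisors of $Y$: for each prime $P\subseteq Y$, its preimage is an irreducible divisor $\widetilde P$ with $\pi^*P=\mu(P)\,\widetilde P$. The key geometric input, extracted from \eqref{eqn:pdiv} and the definitions in \eqref{eqn:d}, is the valuation formula
\[ \nu_{\widetilde P}(f\cdot \chi^u) = \mu(P)\bigl(\nu_P(f) + \alpha_P(s(u))\bigr), \qquad f\in k(Y),\ u\in M. \]
Since $\widetilde E_{\widetilde P}=\mu(P)\,E_P$, this lets one rewrite the Lemma \ref{lem:dsplit} criterion on $X^\circ$ at $\widetilde P$ purely in terms of $\nu_P$, $\bar\sigma$, and $\alpha_P(s(u))$. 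Using the integrality of $\nu_P$ and the fact that $\mu(P)\geq 1$, the condition at $\widetilde P$ becomes
\[ \nu_P(f)\geq -E_P+\Bigl\lceil \tfrac{p^e}{\mu(P)}-\alpha_P(s(u))\Bigr\rceil \ \Longrightarrow\ \nu_P(\bar\sigma(f))\geq 1, \]
which, again by Lemma \ref{lem:dsplit}, is the criterion for $\bar\sigma$ to extend over a divisor whose $P$-coefficient equals $E_P+p^e-\lceil p^e/\mu(P)-\alpha_P(s(u))\rceil$.

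Matching this with the target divisor $\lceil(p^e-1)\Delta+\D(s(u))\rceil+E$ reduces the proof to the numerical identity
\[ p^e - \Bigl\lceil \tfrac{p^e}{\mu(P)}-\alpha_P(s(u))\Bigr\rceil \;=\; \Bigl\lceil (p^e-1)\tfrac{\mu(P)-1}{\mu(P)} + \alpha_P(s(u))\Bigr\rceil.  \]
This identity is where the no-$p$-torsion hypothesis enters: it forces $\gcd(p,\mu(P))=1$, so writing $p^e=q\,\mu(P)+r$ with $1\leq r<\mu(P)$ and $\alpha_P(s(u))=m+a/\mu(P)$ with $0\leq a<\mu(P)$ (which is possible because $\mu(P)$ annihilates $\bar\alpha_P$ by Proposition \ref{prop:stab}), both ceilings evaluate to the same integer after a short case distinction on whether $a<r$ or $a\geq r$.

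The main obstacle is the valuation formula and the numerical identity: the formula requires carefully reading off ramification indices and the shift by $\D(s(u))$ from the presentation \eqref{eqn:pdiv}, and the identity requires the coprimality $\gcd(p,\mu(P))=1$, which in turn requires the no-$p$-torsion hypothesis on $H$ and Proposition \ref{prop:stab} to identify $\mu(P)$ with the order of $\bar\alpha_P$. Everything else is formal manipulation with Lemmas \ref{lemma:rationaliso} and \ref{lem:dsplit}, the $M$-grading on $F_*^e\CO_X$, and the identification of prime divisors upstairs and downstairs.
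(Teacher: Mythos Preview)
Your overall strategy agrees with the paper's: identify maps at the function-field level via Lemma~\ref{lemma:rationaliso}, then use valuation conditions (Lemma~\ref{lem:dsplit}) on both $X^\circ$ and $Y$, and match them via a numerical identity. However, there is a genuine gap. When you apply Lemma~\ref{lem:dsplit} on $X^\circ$ at $\widetilde P$, the test functions $g\in k(X)$ range over \emph{all} weights $u'\in u+p^e M$, not just $u'=u$; for $g=f\chi^{u+p^e w}$ one obtains a family of implications indexed by $w\in M$ (equivalently by $\alpha_P(w)\in \tfrac{1}{\mu(P)}\ZZ$), with both the hypothesis threshold and the conclusion threshold depending on $w$. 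Your displayed condition is exactly the $w=0$ member of this family, but you do not show that the $w=0$ condition implies all the others. This reduction is the heart of the argument: in the paper it is carried out by substituting $f\mapsto f g^{\lambda}$ (with $g$ a local equation for $P$), so that the pair $(\alpha_P(w),\lambda)$ collapses to a single parameter $b/\mu$ ranging over $\tfrac{1}{\mu}\ZZ$, after which one can explicitly minimize. The phrase ``using the integrality of $\nu_P$ and the fact that $\mu(P)\geq 1$'' does not supply this step.

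Two smaller points. First, you do not address the passage from a general diagonalizable $H$ to a torus; the paper handles this at the end by comparing $X^\circ$ with $X'=(T\times X^\circ)/H$ via Lemma~\ref{lemma:xprime}, and your bijection between prime divisors upstairs and downstairs (as well as the valuation formula) really uses that the acting group is connected. Second, your claim that the ceiling identity requires $\gcd(p,\mu(P))=1$ is unnecessary: since $\alpha_P(s(u))\in\tfrac{1}{\mu}\ZZ$, the identity
\[
p^e-\Bigl\lceil \tfrac{p^e}{\mu}-\alpha\Bigr\rceil \;=\; \Bigl\lceil (p^e-1)\tfrac{\mu-1}{\mu}+\alpha\Bigr\rceil
\]
follows from $\lceil x\rceil=-\lfloor-x\rfloor$ and $\lceil n/\mu\rceil=\lfloor(n+\mu-1)/\mu\rfloor$ without any coprimality assumption; consistently, Remark~\ref{rem:with-p-torsion} notes that this lemma survives even when $H$ has $p$-torsion.
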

\begin{proof}
Assume first that $H=T$, that is, $H$ is a torus. Without loss of generality, we may assume that $Y=\spec A_0$ is affine, $X^\circ=\spec A$ is affine with $A=\bigoplus_{u\in M}A_u$, and $\D(u)=\alpha(u)P$ for some $\alpha:M\to \QQ$ and a prime principal divisor $P=V(g)$. Furthermore, we may assume that $E=\beta \cdot P$ for some $\beta\in\ZZ$.
Consider an $A$-linear map $\sigma:F_* A(\widetilde E)\to A$ of degree $u$, that is, $\sigma(g^{-\beta} A_w)\subset A_{(w-u)/p^e}$, where we put $A_w=0$ if $w \notin M$.
Such a map is determined by its restriction to $g^{-\beta}A_{u}$. Indeed, for $f\in g^{-\beta}A_{p^ew+u}$, we have $f=(f'/h)\chi^{p^ew}$ for $f'\in g^{-\beta}A_{u}$, $h\in A_0$ and
$$
\sigma(f)=\sigma\left(\frac{f'}{h}\chi^{p^ew}\right)=\frac{1}{h}\chi^w\sigma\left(f'h^{p^e-1}\right)
$$
with $\sigma$ vanishing on graded pieces not of this form.
Note that this map 
$$
\Hom_{A}(F_*^e A(\widetilde{E}),A)_u \to  \Hom_{A_0}(F_*^e g^{-\beta}A_u,A_0)
$$
is induced by the isomorphism \eqref{eqn:rationaliso}. Here, we are viewing $g^{-\beta}A_u$ as a submodule of $K$, where $K$ is the field of fractions of $A_0$.

Now, an $A_0$-linear map $\tau:g^{-\beta}A_{u}\to A_0$ extends to an $A$-linear map if and only if
\[ \tau\left( g^{-\lfloor  \alpha(p^ew+u)\rfloor-\beta} \cdot A_{0}\right) \subseteq  g^{-\lfloor  \alpha(w) \rfloor} \cdot A_0, \]
for all $w\in M$. Here we extend  $\tau$ to a map $F_*K\to K$ by localization. 
But this is equivalent to
\begin{equation}\label{cond2} 
 \nu(f) \geq -\lfloor \alpha(p^ew+u)\rfloor -\beta
\quad \Rightarrow \quad \nu(\tau(f)) \geq -\lfloor \alpha(w)\rfloor
\quad (\text{for all }w\in M),
\end{equation}
where $\nu$ is the valuation corresponding to $P$.

Consider now \eqref{cond2} for all $w\in M$ and for $f'=fg^\lambda$ as $\lambda\in \ZZ$ varies. This translates to the condition
\[ \nu(\tau(f)) \geq -\min \{ \lfloor  \alpha(w) +\lambda \rfloor \,|\, w\in M,\lambda\in\ZZ\  \nu(f) \geq - \lfloor p^e(\alpha(w) +\lambda)+\alpha(u)\rfloor-\beta ) \}. \]
But as $w$ and $\lambda$ vary, the quantity $\alpha(u)+\lambda$ (appearing here twice) traces all numbers of the form $b/\mu$ with $b\in \ZZ$, $\mu=\mu(\bar\alpha)$. We can thus rewrite the above inequality as follows:
\begin{equation}\label{eqn:ineq} \nu(\tau(f)) \geq -\min \left\{ \left\lfloor \frac{b}{\mu} \right\rfloor \,|\, b\in\ZZ, \, \nu(f) \geq - \left\lfloor \frac{p^eb+\mu (\alpha(u)+\beta)}{\mu}\right\rfloor \right\}. \end{equation}
Furthermore, the right hand side of \eqref{eqn:ineq} is
$$
-\left \lfloor
\frac{1}{\mu}
\left\lceil
\frac{
-\mu(\nu(f)+\alpha(u)+\beta)}{p^e}\right\rceil
 \right \rfloor=
\left \lceil
\frac{1}{\mu}
\left\lfloor
\frac{
\mu(\nu(f)+\alpha(u)+\beta)}{p^e}\right\rfloor
 \right \rceil
$$
so \eqref{eqn:ineq} is equivalent to requiring 
\begin{equation*}  \nu(\tau(f)) \geq \left\lceil \frac{1}{\mu} \left\lfloor \frac{\mu(\nu(f)+\alpha(u)+\beta)}{p^e} \right\rfloor \right\rceil.  
\end{equation*}

Now consider an $f$ with $0\leq \nu(f)+\alpha(u)+\beta<p^e$; we can always reduce to this case by multiplying $f$ by a monomial in $g^{p^e}$. In such a situation, the right hand side of \eqref{eqn:ineq} is at most $1$, and it equals $0$ if and only if $\nu(f)+\alpha(u)+\beta < p^e/\mu$.
 We conclude that the system of inequalities \eqref{eqn:ineq} can be reduced to 
\[ \nu(\tau(f)) \geq 1 \quad \text{if} \quad \nu(f) \geq \left\lceil \frac{p^e}{\mu} -\alpha(u)\right\rceil-\beta.  \]

On the other hand, a $K$-linear map $\tau:F_* K\to K$ restricts to an element of $\Hom_{\CO_Y}(F_*\CO_Y(\lceil (p^e-1)\Delta+\D(u) \rceil+E),\CO_Y)$ if and only if
 \[ \nu(f) \geq -\left\lceil \alpha(u)+(p^e-1)\frac{\mu-1}{\mu} \right \rceil-\beta+ p^e \quad \Rightarrow \quad \nu(\tau(f)) \geq 1 \quad \text{for all } f\in K   \]
by Lemma \ref{lem:dsplit}.
But, since $\alpha(u) \in \frac{1}{\mu}\ZZ$, we have 
$$
-\left\lceil \alpha(u)+(p^e-1)\frac{\mu-1}{\mu} \right \rceil+p^e=\left\lceil \frac{p^e}{\mu} -\alpha(u)\right\rceil
$$
and the claim follows.

To treat the general case, we first apply the above argument to $X'=(T\times X^\circ)/H$. Note that the isomorphisms \eqref{eqn:rationaliso} for $X$ and $X'$ induce identifications for $v\in \CH(T)$
\begin{align*}
 \Hom_{k(X)}(F_*^ek(X)^{\mathrm{s-inv}},k(X)^{\mathrm{s-inv}})_{\phi^*(v)}
&\cong \Hom_{k(X')}(F_*^ek(X')^{\mathrm{s-inv}},k(X')^{\mathrm{s-inv}})_v   \\
\sigma&\mapsto\sigma'
\end{align*}
which can be explicitly described as $\sigma'(f\cdot \chi^{v'}) = \sigma(f\cdot \chi^{\phi^*(v')})_{\phi((v'-v)/p^e)}$ if $v-v'\in p^e\CH(T)$, $0$ otherwise. It is clear from this description that $$\sigma\in \Hom_{\CO_{X^\circ}}(F_*^e\CO_{X^\circ}(\widetilde{E}),\CO_{X^\circ})$$ if and only if $\sigma'\in \Hom_{\CO_{X'}}(F_*^e\CO_{X'}(\widetilde{E}'),\CO_{X'})$, where $\tilde E'$ is the pull-back of $E$ to $X'$. 
\end{proof}

\begin{rem} 
Our proof of Lemma~\ref{lemma:piotr}, while direct, is perhaps not too illuminating. Let us explain why we expected Lemma~\ref{lemma:piotr} and Theorem~\ref{thm:main} to be true in the first place. In the case when $H$ is a torus, there is a relation between  $K_X$ and $K_Y$, along with a formula relating sections of their integral multiples which implicitly involves the divisor $\Delta$ \cite[\S 8.1 and 8.3]{tsurvey}. 
The relation between sections of $(1-p)K$ and $F$-splittings (Remark~\ref{rem:duality}) then suggests our main theorem. To turn this expectation into a proof, one would need to check that the identifications of \cite{tsurvey} are compatible with the Frobenius trace maps. This is the approach taken in \cite{schwede:10b} for the situation of finite covers.
%The relation between sections of $(1-p)K$ and $F$-splittings (Remark~\ref{rem:duality}) then suggests that the ``naive'' formulation of Theorem~\ref{thm:main}, that $X$ is $F$-split if and only if $Y$ is, should fail, and that one should take the divisor $\Delta$ into account. To turn this expectation into a proof of our main theorem, one needs to check that the identifications of \cite{tsurvey} are compatible with the Frobenius trace maps. This is the approach taken in \cite{schwede:10b} for the situation of finite covers.
\end{rem}

The next goal is to relate $F$-splittings on $X$ and $X^\circ$.

\begin{lemma}\label{lemma:extends}
We have 
$$
\Hom_{\CO_X}\left(F_* \CO_X,\CO_X\right)=\bigoplus_{u\in M\cap (p-1)P_X}\Hom_{\CO_{X^\circ}}\left(F_* \CO_{X^\circ},\CO_{X^\circ}\right)_u
$$
\end{lemma}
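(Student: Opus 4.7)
The strategy is to decompose both sides into $M$-weight spaces and analyze extension across the boundary $B = X \setminus X^\circ$ prime divisor by prime divisor. Since $H$ has no $p$-torsion, it is linearly reductive, so both $\Hom_{\CO_X}(F_*\CO_X,\CO_X)$ and $\Hom_{\CO_{X^\circ}}(F_*\CO_{X^\circ},\CO_{X^\circ})$ decompose as direct sums of their $u$-graded pieces for $u\in M$, using the twisted $H$-equivariant structure on $F_*\CO$ discussed before Lemma~\ref{lemma:rationaliso}. The natural restriction map is injective (since $X^\circ \subseteq X$ is dense and the sheaves are torsion-free) and respects the grading. Hence it suffices to decide, for each $u \in M$, whether a weight-$u$ element $\sigma$ of the right-hand side extends to $X$.

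By normality of $X$, extension of $\sigma$ across $X$ is equivalent to extension across each prime divisor $D \subseteq B$; localizing at the DVR $\CO_{X,\eta_D}$ and invoking Lemma~\ref{lem:dsplit} (with zero divisor data), this becomes the valuative implication $\nu_D(f)\geq p \Rightarrow \nu_D(\sigma(f))\geq 1$ for every $f \in k(X)$. I would reduce this test to semi-invariant $f$ by invoking linear reductivity: either apply the Reynolds projector to split $f$ into weight components, or pick an $H_\redu^0$-semi-invariant uniformizer at $\eta_D$ (which exists because the stabilizer of $\eta_D$ is one-dimensional, by Lemma~\ref{lemma:boundary}), and decompose the completion $\widehat{\CO}_{X,\eta_D}$ into $H$-isotypic summands over $\widehat{\CO}_{D,\eta_D}$. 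For a semi-invariant $f$ of weight $w$, Lemma~\ref{lemma:boundary} gives $\nu_D(f) = -\rho_D(w)$, while the weight-$u$ identity $\sigma(A_{u'})\subseteq A_{(u'-u)/p}$ forces $\sigma(f)$ to be semi-invariant of weight $(w-u)/p$ (and zero unless $w \equiv u \bmod pM$), so $\nu_D(\sigma(f)) = -\rho_D((w-u)/p)$.

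Writing $w = u + pm$ with $m \in M$ and using that $\rho_D$ takes integer values and is primitive, the valuative implication, required for all such $m$, simplifies after a short arithmetic analysis involving floors of $\rho_D(u)/p$ to a single inequality of the form $\rho_D(u) \leq p-1$ at each $D$. Intersecting over all prime divisors $D \subseteq B$ yields exactly the condition $u \in (p-1)P_X$, and the claimed equality follows.

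The main obstacle is the reduction to semi-invariant $f$: this step relies essentially on the absence of $p$-torsion in $H$ (so that $H$ is linearly reductive), and it is the mechanism by which the statement can fail without this hypothesis, as foreshadowed in Remark~\ref{rem:with-p-torsion}. Once the reduction is in hand, the remaining arithmetic is routine, provided one tracks carefully the residue class of $u$ modulo $p$ and the floor/ceiling functions arising from integrality of $\rho_D(m)$.
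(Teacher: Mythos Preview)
Your proposal follows essentially the same route as the paper's proof: decompose into weight pieces, reduce extension across $B$ to a valuative test at each prime divisor $D\subseteq B$, and use Lemma~\ref{lemma:boundary} to compute $\nu_D$ on semi-invariants, arriving at the single inequality $\rho_D(u)\leq p-1$ for each $D$. You are more explicit than the paper about justifying the reduction to semi-invariant test functions (the paper simply works with semi-invariants without comment), and your invocation of Lemma~\ref{lem:dsplit} makes the valuative criterion cleaner, but these are expository rather than substantive differences.

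One correction to your closing paragraph: the reduction to semi-invariants does \emph{not} hinge on the absence of $p$-torsion. Diagonalizable group schemes are linearly reductive in every characteristic---their representations are precisely $M$-graded vector spaces, hence always completely reducible---so the isotypic decomposition is available regardless. Consistently, Remark~\ref{rem:with-p-torsion} records that Lemma~\ref{lemma:extends} continues to hold even when $H$ has $p$-torsion; the genuine obstruction in that setting appears in Lemma~\ref{lemma:invariant} (reconstructing a splitting $\sigma$ of $X$ from a splitting $\bar\sigma$ of $(Y,\Delta)$), not in the extension statement you are proving here.
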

\begin{proof}
Consider any non-zero eigensection $\sigma_u\in \Hom_{\CO_{X^\circ}}\left(F_* \CO_{X^\circ},\CO_{X^\circ}\right)_u$. The claim is that this section extends to $X$ if and only if $u\in (p-1)P_X$. Now, a semi-invariant $f\in F_* \CO_{X^\circ}$ of weight $w$ is regular on a general point of a prime divisor $D$ from above exactly if $-\rho_D(w)\geq 0$. On the one hand, $\sigma_u(f)$ has weight $u+w$, so is regular if and only if $\sigma_u(f)$ is zero, or $-\frac{1}{p}\rho_D(w+u)\geq 0$. But $\sigma_u(f)=0$ if $u+w\notin pM$.
Hence $\sigma_u$ extends to $X$ if $u\in (p-1)P_X$. 

On the other hand, since $\sigma_u\neq0$, there locally exists a semi-invariant function $f$ of some weight $w$ such that $\sigma_u(f)\neq 0$. This implies that for any weight $w'\in w+pM$ there locally is a semi-invariant function $f'$ of weight $w'$ with $\sigma_u(f')\neq 0$. We can choose $w'$ such that $0\leq -\rho_D(w')<p$, in which case we must have $-\frac{1}{p}\rho_D(w'+u)\geq 0$, that is, $\rho_D(u)\leq -\rho_D(w')<p$.
\end{proof}

\begin{lemma}\label{lemma:invariant}
Consider a section 
$$\sigma\in \Hom_{\CO_X}\left(F_* \CO_X,\CO_X\right)$$
with decomposition $\sigma=\bigoplus_{u\in M}\sigma_u$ into eigensections. If $\sigma$ is an $F$-splitting, then so is $\sigma_0$. Conversely, if $\sigma_0$ is an $F$-splitting and $\sigma_u=0$ for all $u\in pM$, $u\neq 0$, then so is $\sigma$. Finally, $\bar \sigma_0$ is a splitting if and only if $\sigma_0$ is.
\end{lemma}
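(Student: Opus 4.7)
The plan rests on one simple observation that I will record first: an $\CO_X$-linear map $\sigma\in\Hom_{\CO_X}(F_*\CO_X,\CO_X)$ is an $F$-splitting if and only if $\sigma(1)=1$. Indeed, $\CO_X$-linearity (with the twisted action $h\ast m = h^p m$ on $F_*\CO_X$) forces $\sigma(h^p)=h\,\sigma(1)$ for every local section $h$, so the condition $\sigma\circ F^*=\mathrm{id}$ collapses to this single scalar equation. The same remark applies to $\bar\sigma_0$ viewed as a $\CO_Y$-linear map into $\CO_Y$. I will then deduce all three assertions by tracking the weight-$0$ component of $\sigma(1)$.

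Recall from the discussion preceding \eqref{eqn:rationaliso} that $\sigma_u$ sends $\mathcal{A}_{u'}$ into $\mathcal{A}_{(u'-u)/p}$, where the target is taken to be $0$ whenever $u'-u\notin pM$ and the quotient is unambiguous because $H$ has no $p$-torsion. In particular $\sigma_u(1)\in\mathcal{A}_{-u/p}$ when $u\in pM$, and vanishes otherwise. For (1), assuming $\sigma(1)=1$, matching the weight-$0$ component in $\sigma(1)=\sum_u\sigma_u(1)$ forces $\sigma_0(1)=1$, so $\sigma_0$ is itself a splitting by the initial observation. For (2), the hypothesis $\sigma_u=0$ for $u\in pM\setminus\{0\}$ eliminates every nonzero-weight contribution, leaving $\sigma(1)=\sigma_0(1)=1$.

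For (3), I will read off $\bar\sigma_0$ from the isomorphism of Lemma~\ref{lemma:rationaliso}: by construction $\bar\sigma_0(f)=\sigma_0(f\cdot\chi^0)_0=\sigma_0(f)_0$ for $f\in k(Y)$, and since $\sigma_0$ preserves weight $0$ this equals $\sigma_0(f)$. Setting $f=1$ yields $\bar\sigma_0(1)=\sigma_0(1)$, so by the initial observation the splitting conditions on the two sides are literally the same equation.

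No step is genuinely difficult; the only subtlety to keep track of is the well-definedness of the shifted weight $(u'-u)/p$ in $M$, which is precisely where the standing assumption that $H$ has no $p$-torsion enters the argument. The rest is bookkeeping with the weight grading.
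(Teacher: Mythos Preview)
Your proof is correct and follows essentially the same approach as the paper: reduce the splitting condition to $\sigma(1)=1$, then track the weight decomposition of $\sigma(1)$ using that $\sigma_u(1)\in\mathcal{A}_{-u/p}$, and for the final claim observe $\bar\sigma_0(1)=\sigma_0(1)$. Your write-up is in fact slightly cleaner than the paper's, which phrases the second step as ``$\sigma_u(1)$ has weight $u$'' (a minor imprecision) rather than your more accurate $\sigma_u(1)\in\mathcal{A}_{-u/p}$.
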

\begin{proof}
Such a section $\sigma$ is an $F$-splitting if and only if $\sigma(1)=1$. Since $1$ is an eigenfunction of weight $0$ in both $\CO_X$ and $F_* \CO_X$, $\sigma(1)=1$ implies that $\sigma_0(1)=1$ as well, hence $\sigma_0$ is an $F$-splitting. On the other hand, since $\sigma_u(1)$ has weight $u$ in $\CO_X$, and the weight of any semi-invariant function in $\CO_X$ is a multiple of $p$, we get that $\sigma(1)=\sum_{u\in pM} \sigma_u(1)$ and the second claim follows.

For the final claim, note that $\sigma_0(1)=\bar \sigma_0(1)\in k(Y)\subset k(X)$.
\end{proof}

\begin{lemma}\label{lemma:fregboundary}
If $X^\circ$ is $F$-regular, then so is $X$.
\end{lemma}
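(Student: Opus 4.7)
The plan is to verify the two-condition criterion of Theorem \ref{thm:schwede} applied to $(X,0)$. Let $D_1,\ldots,D_r$ be the (finitely many) prime divisors of $X$ contained in the boundary $B=X\setminus X^\circ$, and set $C=D_1+\cdots+D_r$. We verify conditions~(1) and~(2) of Theorem \ref{thm:schwede} for this $C$; in the degenerate case $r=0$ we take $V=X$ in the argument for (2) below, which directly exhibits $X$ as $F$-regular and bypasses Schwede's criterion.

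For condition~(2), set $V:=X\setminus C$, so that $V\supseteq X^\circ$ while $V\setminus X^\circ\subseteq B\setminus\bigcup_i D_i$ has codimension $\geq 2$ in $X$. For any effective divisor $D>0$ on $V$, the restriction $D|_{X^\circ}$ is again nonzero (a codimension-one support cannot lie in $V\setminus X^\circ$), so by $F$-regularity of $X^\circ$ there exist $e>0$ and a splitting $\sigma^\circ\in\Hom(F_*^e\CO_{X^\circ}(D|_{X^\circ}),\CO_{X^\circ})$. Via the duality in Remark \ref{rem:duality} this is a section of a rank-one reflexive sheaf on $X^\circ$, and by normality it extends uniquely across the codimension $\geq 2$ locus $V\setminus X^\circ$ to a section on $V$. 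The extension is still a splitting because the equation $\sigma(1)=1$ already holds on the dense open $X^\circ$.

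For condition~(1), we construct a splitting $\sigma\colon F_*\CO_X(C)\to\CO_X$ by lifting an $F$-splitting of $X^\circ$. Pick any $F$-splitting $\tau^\circ$ of $X^\circ$ (which exists since $X^\circ$ is $F$-split by $F$-regularity) and let $\tau^\circ_0$ denote its weight-$0$ component; Lemma \ref{lemma:invariant} ensures $\tau^\circ_0$ is still a splitting. The remaining step is a mild adaptation of Lemma \ref{lemma:extends}: by the same local calculation, now tracking the extra pole allowance $c_{D_i}$ of $C$ at each $D_i$, a weight-$u$ section of $\Hom(F_*\CO_{X^\circ},\CO_{X^\circ})$ extends to a section of $\Hom(F_*\CO_X(C),\CO_X)$ precisely when $\rho_{D_i}(u)+c_{D_i}\leq p-1$ for every $i$. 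For $u=0$ and $C$ reduced, this reduces to $1\leq p-1$, which holds since $p\geq 2$; hence $\tau^\circ_0$ extends to the desired $\sigma$, and density of $X^\circ$ once more forces $\sigma(1)=1$. The main technical hurdle is precisely this adaptation of Lemma \ref{lemma:extends}, but it is accomplished by essentially the same calculation as in the original proof, with the inequality shifted by $c_{D_i}$.
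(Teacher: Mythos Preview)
Your proof is correct and follows essentially the same route as the paper's. The only organizational difference is that the paper first removes a codimension-$\geq 2$ locus so that $B$ becomes a (Cartier) divisor and $X\setminus B=X^\circ$; this makes condition~(2) of Theorem~\ref{thm:schwede} immediate, whereas you handle the codimension-$\geq 2$ discrepancy inside the verification of condition~(2) by extending splittings from $X^\circ$ to $V=X\setminus C$. The heart of both arguments---extending the $H$-invariant splitting of $X^\circ$ to a map $F_*\CO_X(C)\to\CO_X$ by the weight-valuation computation at each boundary divisor $D_i$---is identical.
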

\begin{proof}
Suppose that $X^\circ$ is $F$-regular. Since $X$ is normal, the property of being $F$-regular is independent of sets of codimension at least two, and we may assume that $X$ is non-singular and $B:=X\setminus X^\circ$ is a Cartier divisor. By Theorem \ref{thm:schwede}, it suffices to show that the map 
$ \CO_X\to F_* \CO_X(B)$
splits.

Since $X^\circ$ is $F$-regular, it is $F$-split. Let $\sigma$ be any splitting, which may assume to be $H$-invariant (Lemma \ref{lemma:invariant}). Hence, $\sigma$ extends to a splitting $F_*\CO_X\to \CO_X$ (Lemma \ref{lemma:extends}). Working locally on an affine invariant chart, consider any $f \in F_*\CO_X(B)$ homogeneous of weight $u$. We must show that $\sigma(f)\in \CO_X$. Now, $f \in F_*\CO_X(B)$ implies that $\rho_D(u)\leq 1$ for any component $D$ of $B$. But then 
$$
\left\lfloor \frac{1}{p}u(\rho_D) \right\rfloor \leq 0
$$
so $\sigma(f)$ must be regular on $X$, since $\sigma(f)$ has weight $u/p$, and equals $0$ if $u/p\notin M$. Hence, $\sigma$ gives a splitting of $\CO_X\to F_* \CO_X(B)$.
\end{proof}

\begin{proof}[Proof of Theorem \ref{thm:main}]
We first deal with the statement concerning $F$-splitting.
By Lemma \ref{lemma:invariant}, if $X$ has an $F$-splitting, it has an invariant $F$-splitting. By Lemma \ref{lemma:extends}, $X$ has an invariant $F$-splitting if and only if $X^\circ$ has an invariant $F$-splitting. Finally, $X^\circ$ has an invariant $F$-splitting if and only if $(Y,\Delta)$ has an $F$-splitting by Lemma \ref{lemma:piotr} applied in the case $u=0$, $E=0$.

We now deal with $F$-regularity.
By Lemma \ref{lemma:fregboundary}, we may assume that $X=X^\circ$. Firstly, assume that $X$ is $F$-regular, and let $D$ be an effective divisor on $Y$. Then there is a splitting of $\CO_X\to F_*^e \CO(\widetilde D)$ which we may assume to be $H$-invariant (cf. Lemma \ref{lemma:invariant}) which leads to a splitting of $\CO_Y\to F_*^e\CO_Y(\lceil (p^e-1)\Delta\rceil+D)$ by Lemma \ref{lemma:piotr}. Hence, $(Y,\Delta)$ is $F$-regular.

Conversely, assume that $(Y,\Delta)$ is $F$-regular. Since $X$ and $Y$ are normal, we may remove a set of codimension at least two to arrive at the situation that $Y=U\cup C$ for some effective divisor $C$ and some non-singular affine $U$ over which $X$ is a torsor. Now, since $(Y,\Delta)$ is $F$-regular, the map $\CO_Y\to F_*^e\CO_Y(\lceil (p^e-1)\Delta\rceil+C)$ splits for some $e$, so by Lemma \ref{lemma:piotr} the map $\CO_X\to F_*^e \CO( \widetilde{C})$ splits as well, where $\widetilde{C}$ is the preimage of $C$ in $X$. Furthermore,  $X\setminus \widetilde{C}$  is affine, and non-singular since it is an $H$-torsor over $U$ and $H$ is smooth \cite[\S 4]{milne:80a}.  It follows that $X\setminus\widetilde{C}$ is $F$-regular \cite[Remark 3.3]{schwede:10a}. Hence, by Theorem \ref{thm:schwede}, $X$ is $F$-regular.
\end{proof}
\begin{rem}\label{rem:bijection}
Our proof of Theorem \ref{thm:main} actually shows that 
$H$-invariant $F$-splittings of $X$ are in bijection with $F$-splittings of $(Y,\Delta)$. Furthermore, combining Lemmas \ref{lemma:extends} and \ref{lemma:piotr} gives a graded isomorphism
$$
\Hom_{\CO_X}\left(F_* \CO_X,\CO_X\right)\cong \bigoplus_{u\in M\cap (p-1)P_X} \Hom_{\CO_Y}(F_*\CO_Y(\lceil (p-1)\Delta+\D(u) \rceil),\CO_Y).$$
Lemma \ref{lemma:invariant} providing a sufficient criterion for a section $\sigma$ of the right hand side to correspond to a splitting.
 \end{rem}

\begin{rem} \label{rem:with-p-torsion}
Many of the statements above continue to hold if we allow $H$ to have $p$-torsion. Note that in this generality, if $X=\spec A$, $A=\bigoplus_{u\in M} A_u$, the twisted action (using the $e$-th Frobenius on $H$) on $X$ corresponds to the grading 
\[ A = \bigoplus_{u\in M} A'_u \quad \text{where} \quad A'_u = \bigoplus_{w\,:\,p^ew = u} A_u, \]
and $\Hom(F^e_* \CO_X, \CO_X)_u$ consists of those $\sigma:A\to A$ which satisfy $\sigma(f^{p^e} g) = f\sigma(g)$ and $\sigma(A_{u'}) \subseteq A'_{u'-u}$, i.e., if $f\in A_u$ then $\sigma(f)_{w} = 0$ unless $p^e w = u'-u$. In this case, the map \eqref{eqn:rationaliso} is defined as $\bar\sigma(f) = \sigma(f\cdot \chi^u)_0$, the degree $0$ part of $\sigma(f\cdot \chi^u)$ with respect to the original grading on $A$ (note that $A'_0$ itself is graded by $M[p^e]$). Lemma~\ref{lemma:rationaliso} is still true, with the inverse map $\bar\sigma\mapsto\sigma$ given by the more complicated formula
\begin{equation} \label{eq:complicated} \sigma(f\cdot \chi^u) = \sum_{w\,:\, p^e w = u'-u } \bar\sigma(\lambda_{u,w} f)\cdot \chi^{w}, 
\end{equation}
where 
\[ \lambda_{u, w} = \frac{\chi^{u+p^ew}}{\chi^u(\chi^w)^{p^e} } = g_{u, p^ew} g_{(p^e-1)w,w}g_{(p^e-2)w,w}\ldots g_{2w,w}g_{w,w}\in k(Y). \]

Moreover, Lemmas~\ref{lemma:piotr} and \ref{lemma:extends} continue to hold, as does the first statement of Lemma \ref{lemma:invariant}. Furthermore, any invariant $F$-splitting $\sigma$ of $X$ induces an $F$-splitting $\bar\sigma$ of $(Y,\Delta)$. Hence, $X$ $F$-split (or $F$-regular) implies the same for $(Y,\Delta)$. The problem with the other direction in Theorem~\ref{thm:main} is that $\sigma:F_*\CO_X\to\CO_X$ (of weight $u=0$) does not have to be a splitting if $\bar\sigma$ is, as Example~\ref{example:with-p-torsion} below shows. In fact, $\sigma$ is a splitting if and only if $\bar\sigma$ is a splitting satisfying $\bar\sigma((\chi^u)^{p^e})=0$ for every $u\in M[p^e]$. In more intrinsic terms, this condition is equivalent to $\bar\sigma(f)=0$ for every $f\in k(Y)$  which is not a $p$-th power but which becomes a $p$-th power in $k(X)$. We do not know if Theorem \ref{thm:main} still holds if $H$ has $p$-torsion.
\end{rem}

\begin{ex} \label{example:with-p-torsion}
Let $H = \mu_p$, $X=\mathbb{A}^1_k$  with coordinate $x$ and the standard $\mu_p$-action. Then $Y=\mathbb{A}^1_k$ with coordinate $y$, and $\pi^* y = x^p$. In this case, for $u=0$ and $f=1$, the formula \eqref{eq:complicated} simplifies to
\[ \sigma(1) = \sum_{i=0}^{p-1} \bar\sigma(y^i)x^i.  \]
In particular, $\sigma$ is a splitting if and only if $\bar\sigma(y^i) = 0$ for $0<i<p$, $\bar\sigma(1)=1$, that is, if $\bar\sigma$ is $\mathbb{G}_m$-invariant for the standard action of $\mathbb{G}_m$ on $Y$. 
\end{ex}

\section{Compatible Splittings}\label{sec:compat}
We again consider an $H$-variety $X$, and use notation established in \S \ref{ss:setup}, assuming again that $H$ has no $p$-torsion.  We now establish two results concerning compatible splittings. Recall that $H_\redu^0$ is the reduced connected component of the identity in $H$.

\begin{prop}\label{prop:comp1}
Consider an $H_\redu^0$-invariant splitting $\sigma\in \Hom_{\CO_X}(F_*\CO_X,\CO_X)$. Then $\sigma$ is compatible with $B:=X\setminus X^\circ$, that is, $\sigma(F_* \I_B)= \I_B$. In particular, any $H$-invariant splitting is compatible with $B$.
\end{prop}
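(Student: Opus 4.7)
The plan is to decompose $\sigma = \sum_u \sigma_u$ into $H$-weight eigensections and control the vanishing of each $\sigma_u(f)$ along every prime divisor $D \subset B$, exploiting that $H_\redu^0$-invariance forces the weights $u$ to be invisible to $\rho_D$.

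First I would reduce the asserted equality $\sigma(F_* \I_B) = \I_B$ to the inclusion $\sigma(F_* \I_B) \subseteq \I_B$: for any local $g \in \I_B$ we have $F^*(g) = g^p \in F_* \I_B$ and the splitting identity gives $g = \sigma(g^p) \in \sigma(F_* \I_B)$, so the reverse inclusion is automatic. Hence only the compatibility inclusion needs to be established.

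For the main inclusion, write $H \cong H_\redu^0 \times G$ with $G$ finite of order prime to $p$, so that $M = \CH(H_\redu^0) \oplus \CH(G)$ with $\CH(G)$ finite torsion. The $H_\redu^0$-invariance of $\sigma$ forces every weight $u$ appearing in the eigendecomposition to lie in $\CH(G)$. By Lemma~\ref{lemma:boundary}, the co-character $\rho_D$ lies in $\Hom(M,\ZZ)$ and factors through $\CH(H_\redu^0)$; in particular $\rho_D(u) = 0$ for every weight $u$ of $\sigma$. Now let $f \in F_*\I_B$ be homogeneous of original weight $w$. Lemma~\ref{lemma:boundary} yields $\nu_D(f) = -\rho_D(w) \geq 1$ for each prime $D \subset B$. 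By the definition of weight-$u$ eigensection recalled in \S\ref{sec:tfrob}, $\sigma_u(f)$ lies in the weight-$(w-u)/p$ piece (and vanishes if $w-u \notin pM$), so a second application of Lemma~\ref{lemma:boundary} gives
\[
\nu_D(\sigma_u(f)) \;=\; -\rho_D\bigl((w-u)/p\bigr) \;=\; -\tfrac{1}{p}\rho_D(w) \;\geq\; \tfrac{1}{p}.
\]
Integrality forces $\nu_D(\sigma_u(f)) \geq 1$, hence $\sigma_u(f) \in \I_D$ for each prime $D \subset B$, and so $\sigma_u(f) \in \I_B$. Summing over $u$ gives $\sigma(f) \in \I_B$, as required. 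The ``in particular'' clause is immediate, since $H$-invariance implies $H_\redu^0$-invariance.

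I do not anticipate a serious obstacle: this is essentially the same vanishing-order computation as in the proof of Lemma~\ref{lemma:extends}, now applied weight-by-weight to test sections in $F_*\I_B$. The only bookkeeping to watch is the identification $\rho_D(w-u) = \rho_D(w)$ for the weights $u$ occurring in $\sigma$, which follows from the factorization of $\rho_D$ through $\CH(H_\redu^0)$.
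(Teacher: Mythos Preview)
Your valuation argument has a genuine gap: you only control $\sigma_u(f)$ along \emph{prime divisors} $D\subset B$, but $B=X\setminus X^\circ$ need not be pure of codimension one, so the implication ``$\sigma_u(f)\in\I_D$ for each prime divisor $D\subset B$, and so $\sigma_u(f)\in\I_B$'' fails in general. A simple counterexample to that implication is $X=\mathbb{A}^2$ with the diagonal $\mathbb{G}_m$-action of weight $(1,1)$: here $X^\circ=\mathbb{A}^2\setminus\{0\}$, $B=\{0\}$ is a single point, and there are no prime divisors contained in $B$ at all, so your argument proves nothing while $\I_B=(x,y)$ still needs to be checked. Lemma~\ref{lemma:boundary} is explicitly about prime divisors, so your use of $\nu_D$ and $\rho_D$ cannot see higher-codimension components of $B$.

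The paper avoids this by arguing purely algebraically. After reducing to the case $H=H_\redu^0$ a torus (noting that the almost-free loci for $H$ and $H_\redu^0$ coincide, since $H/H_\redu^0$ is finite), one works on an affine chart $X=\spec A$ with $A=\bigoplus_{u\in M}A_u$ and identifies $I_B$ with the graded ideal $\bigoplus_{u\in\omega\setminus\omega'}A_u$, where $\omega\subset M_\QQ$ is the weight cone and $\omega'$ its maximal linear face. Since an invariant $\sigma$ sends $A_u$ into $A_{u/p}$ and the condition $u\in\omega\setminus\omega'$ is scale-invariant, the compatibility is immediate. This characterization of $I_B$ is exactly what replaces your divisorial test and handles components of all codimensions at once. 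Your weight bookkeeping (in particular $\rho_D(u)=0$ for $u\in\CH(G)$) is correct and could be used to reduce the general statement to the torus case, but the core step must go through the graded description of $I_B$, not through valuations.
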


\begin{proof}
To begin, assume that $H=H^0_\redu$.
Without loss of generality, $X$ is affine with coordinate ring $A=\bigoplus_{u\in M} A_u$. 
Let $\omega$ be the cone in $M_\QQ$ generated by those $u\in M$ with $A_u\neq 0$, and $\omega'$ the face of elements invertible in the monoid $\omega$. Then the ideal $I_B$ of $B$ is given by $\bigoplus_{u\in M\cap(\omega\setminus \omega')} A_u$. Since $\sigma$ is $H$-invariant, it maps homogeneous elements of degree $u$ to degree $u/p$, so $\sigma(F_*(I_B))=I_B$.

Now to conclude the proof note that for general $H$, any $H$-invariant splitting is also $H_\redu^0$-invariant.
\end{proof}

\begin{prop}\label{prop:comp2}
Now let $S$ be any closed subscheme of $Y$ and $$\sigma\in \Hom_{\CO_X}(F_*\CO_X,\CO_X)$$ an $F$-splitting of $X$.
\begin{enumerate}
\item If the splitting $\sigma$ is compatible with $\overline {\pi^{-1}(S)}\subset X$, then its $H$-invariant part $\sigma_0$ is also compatible with $\overline{ \pi^{-1}(S)}\subset X$.
\item Suppose the $H$-invariant part $\sigma_0$ is compatible with $\overline{ \pi^{-1}(S)}\subset X$. Then $\bar\sigma_0\in\Hom_{\CO_Y}(F_*\CO_Y,\CO_Y)$ is compatible with $S$. 
\item Conversely, suppose that no component of $S$ is contained in the support of $\Delta$ and $\bar\sigma_0$ is compatible with $S\subset Y$. Then $\sigma_0$ is compatible with $\overline{ \pi^{-1}(S)}$.
\end{enumerate}
\end{prop}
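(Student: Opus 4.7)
For part~(1), note that $\overline{\pi^{-1}(S)}\subset X$ is $H$-invariant, so its ideal sheaf $\I := \I_{\overline{\pi^{-1}(S)}}$ is $M$-graded. For any homogeneous $f\in\I$ of weight $w$, the summands $\sigma_u(f)$ appearing in $\sigma(f)=\sum_{u\in M}\sigma_u(f)$ have pairwise distinct weights $(w-u)/p$, so $M$-gradedness of $\I$ forces each $\sigma_u(f)\in\I$; in particular $\sigma_0(f)\in\I$. Arbitrary sections of $\I$ are handled by decomposing into weight components.

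For part~(2), let $f$ be a local section of $\I_S$. Its pullback $\pi^*f$ is an $H$-invariant section of $\I_{\pi^{-1}(S)}\subset\I_{\overline{\pi^{-1}(S)}}$, so by hypothesis $\sigma_0(\pi^*f)\in\I_{\overline{\pi^{-1}(S)}}$. Under the isomorphism \eqref{eqn:rationaliso} the weight-zero invariant function $\sigma_0(\pi^*f)$ is identified with $\bar\sigma_0(f)\in\CO_Y$, and since $\sigma_0(\pi^*f)$ vanishes along $\pi^{-1}(S)\subset X^\circ$, the descended function $\bar\sigma_0(f)$ vanishes along $S$. Normality of $Y$ then yields $\bar\sigma_0(f)\in\I_S$.

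Part~(3) is the substantive content. The plan is to combine the explicit formula for $\sigma_0$ on semi-invariants with the torsor structure of $\pi$ outside $\supp\Delta$. From the proof of Lemma~\ref{lemma:piotr} we extract $\sigma_0(h\cdot\chi^w) = \bar\sigma_0(h)\cdot\chi^{w/p}$ whenever $w\in pM$ and $h$ is a local section of $\CO_Y(\lfloor\D(s(w))\rfloor)$, while $\sigma_0$ vanishes on $(F_*\CO_{X^\circ})_w$ for $w\notin pM$. By Lemma~\ref{lemma:free} applied to $X'=(T\times X^\circ)/H$, the quotient map $\pi:X^\circ\to Y$ becomes an $H$-torsor Zariski-locally over $Y\setminus\supp\Delta$, so on such a trivialization the ideal $\I_{\pi^{-1}(S)}$ decomposes as $\bigoplus_w\I_S\cdot\chi^w$ in the $M$-grading. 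The formula above, combined with the assumed compatibility $\bar\sigma_0(\I_S)\subseteq\I_S$, gives compatibility of $\sigma_0$ with $\pi^{-1}(S)$ over this open locus. The hypothesis that no component of $S$ lies in $\supp\Delta$ ensures that every irreducible component of $\overline{\pi^{-1}(S)}$ meets the torsor locus in a dense open, and the $S_2$-property of the normal scheme $X$ then propagates the containment $\sigma_0(f)\in\I_{\overline{\pi^{-1}(S)}}$ from those generic points to all of $X$.

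The main obstacle is the last step: one must leverage the assumption on $\supp\Delta$ to guarantee that the torsor locus is large enough (dense in each component of $\overline{\pi^{-1}(S)}$) for the normality argument to close up. The hypothesis is precisely calibrated to trivialize the ramification along the codim-one components of $S$, collapsing the interaction between the integer parts $\lfloor\alpha_P(s(w))\rfloor$ in the graded description of $\CO_{X^\circ}$ and the valuations along $\pi^{-1}(P)$, so that the torsor-theoretic translation between $\sigma_0$ and $\bar\sigma_0$ extends globally.
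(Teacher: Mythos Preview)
Your proposal follows essentially the same route as the paper: for (1) you use the $M$-grading of the ideal of $\overline{\pi^{-1}(S)}$ to project onto the weight-zero piece, for (2) you restrict to invariants, and for (3) you reduce to the torsor locus over $Y\setminus\supp\Delta$ and compute directly there using the decomposition $\I_{\pi^{-1}(S)}=\bigoplus_w \I_S\cdot\chi^w$. The paper does exactly this, after first citing \cite[Lemma~1.1.7]{brion:05a} to reduce to the affine case $X=X^\circ$, and then invoking the same lemma again to shrink $Y$ so that $\D$ becomes trivial.

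The one point that needs correction is your closing step in part~(3). You appeal to the $S_2$-property of $X$ to propagate the containment $\sigma_0(F_*\I)\subseteq\I$ from the torsor locus to all of $X$, but $S_2$ is not the right tool: it governs extension of sections of reflexive sheaves across loci of codimension $\geq 2$, whereas the complement of the torsor locus (the preimage of $\supp\Delta$) has codimension~$1$ in $X^\circ$. What is actually needed is the elementary fact (this is \cite[Lemma~1.1.7]{brion:05a}) that compatibility of a map $F_*\CO_X\to\CO_X$ with a closed subscheme $Z$ can be tested on any open $U$ such that $\overline{U\cap Z}=Z$ scheme-theoretically, i.e.\ such that no associated point of $Z$ lies outside $U$. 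The hypothesis that no component of $S$ lies in $\supp\Delta$ is exactly what makes this density condition hold for $Z=\overline{\pi^{-1}(S)}$ and $U=\pi^{-1}(Y\setminus\supp\Delta)$. Once you replace the $S_2$ invocation with this lemma, your argument is complete and matches the paper's.
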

\begin{proof}
By \cite[Lemma 1.1.7]{brion:05a} we may assume that $X=X^\circ$ and that $X$ and $Y$ are affine with respective coordinate rings $A_0$ and $A=\bigoplus_{u\in M} A_u$. Let $I_S\subset A_0$ be the ideal of $S$; then the ideal of $\pi^{-1}(S)=\overline {\pi^{-1}(S)}$ is $A\cdot I_S=\bigoplus_{u\in M} A_u\cdot I_S$. 
Let $\sigma=\sum_{u\in M} \sigma_u$ be the isotypical decomposition of $\sigma$. 

First, assume that $\sigma$ is compatible with $\overline {\pi^{-1}(S)}$. Consider any $f\in F_* (A\cdot I_S)$, without loss of generality homogeneous of degree $w$. Then $\sigma(f)\in A\cdot I_S$, and so we have that $\sigma_u(f)\in A_{(w-u)/p} \cdot I_S$. In particular, $\sigma_0(f)\in A_{w/p} \cdot I_S\subset A\cdot I_S$, so $\sigma_0$ is compatible with $\overline {\pi^{-1}(S)}$.

Now if $\sigma_0$ is compatible with $\overline {\pi^{-1}(S)}$, then for any degree zero element $f\in F_* (A_0 \cdot I_S)=F_*(I_S)$, we have $\sigma_0(f)\in I_S$, so $\bar\sigma_0\in\Hom_{\CO_Y}(F_*\CO_Y,\CO_Y)$ is compatible with $S$. 

On the other hand, suppose that $\bar\sigma_0(I_S)= I_S$, and that $S$ is contained in the support of $\Delta$. Then again by \cite[Lemma 1.1.7]{brion:05a}, we may shrink $Y$ and only consider the case that  $\D$ is trivial, that is, $A=\bigoplus_{u\in M} A_0\cdot \chi^u$. But then for $f\in F_*(I_S\cdot A_0\cdot \chi^u)$, $\sigma_0(f)\in  I_S\cdot A_0\cdot \chi^{u/p}$ as desired, where $\chi^{u/p}=0$ if $u/p\notin M$.
\end{proof}

\section{Separations}\label{sec:sep}
We have been able to characterize $F$-regularity and the existence of an $F$-splitting for an $H$-variety $X$ in terms of the quotient pair $(Y,\Delta)$ in Theorem \ref{thm:main}. However, the quotient $Y$ need not in general be separated. We now describe how to replace the pair $(Y,\Delta)$ with a pair $(\Ysep,\Deltasep)$ such that $\Ysep$ is separated. 
Recall that an open subscheme $U\subset Y$ is \emph{big} if $\codim_Y (Y\setminus U)>1$.
\begin{defn}
A \emph{separation} of a $k$-scheme $Y$ is rational map $\s:Y \dashrightarrow \Ysep$, such that
\begin{enumerate}
\item $\Ysep$ is separated.
\item The map $\s$ is defined on a big open subset $U \subset Y$ which maps locally isomorphically to a big open subset of $\Ysep$.
\end{enumerate}
\end{defn}

Recall that a \emph{prevariety} is an integral scheme of finite type over $k$. We will use the following proposition to replace our quotient $Y=X^\circ/T$ with a variety.

\begin{prop}\label{prop:sep}
  Every normal prevariety admits a separation.
\end{prop}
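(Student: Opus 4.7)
My plan is to construct $\Ysep$ by re-gluing a finite affine open cover of $Y$ along the closures of their overlap diagonals, which has the effect of maximally identifying points that share the same local ring inside $k(Y)$. Choose a finite cover $Y=\bigcup_{i=1}^n U_i$ by normal open affines. For each pair $i,j$ form
\[ W_{ij} \;=\; \overline{\Delta(U_i\cap U_j)} \;\subset\; U_i\times U_j, \]
which is a closed, hence separated, subscheme whose two projections $W_{ij}\to U_i$ and $W_{ij}\to U_j$ are birational and restrict to the identity over $U_i\cap U_j$. Note that for $i=j$, separation of $U_i$ gives $W_{ii}=\Delta(U_i)$, so $\sim$ below will not identify distinct points within a single $U_i$.

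I would then glue the $U_i$'s via the $W_{ij}$'s: define $\Ysep=\bigsqcup_i U_i/\sim$, where $y_i \in U_i$ and $y_j\in U_j$ are identified iff $(y_i,y_j)\in W_{ij}$. The crucial use of normality is in establishing the intrinsic description
\[ (y_i,y_j)\in W_{ij} \;\Longleftrightarrow\; \CO_{Y,y_i}=\CO_{Y,y_j}\text{ as subrings of } k(Y), \]
from which transitivity of $\sim$, and hence the cocycle condition on triple overlaps, is immediate. This will guarantee that $\Ysep$ is a scheme, not merely an algebraic space. Since each $W_{ij}$ is closed in $U_i\times U_j$, the resulting $\Ysep$ is separated, and each $U_i\hookrightarrow \Ysep$ is an open immersion.

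The open immersions $U_i\hookrightarrow \Ysep$ agree on overlaps by construction, since the identification on $U_i\cap U_j$ is captured by $\Delta(U_i\cap U_j)\subset W_{ij}$. They therefore patch to a morphism $\s:Y\to\Ysep$ defined on all of $Y$, which is locally an open immersion and hence a local isomorphism. Since every point of $\Ysep$ is the image of some point of some $U_i\subset Y$, we have $\s(Y)=\Ysep$, so both big open subsets in the definition of a separation may be taken to be $Y$ and $\Ysep$ themselves.

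The principal obstacle I anticipate is proving the intrinsic characterization of $W_{ij}$ stated above, in particular the non-trivial implication $\CO_{Y,y_i}=\CO_{Y,y_j}\Rightarrow (y_i,y_j)\in W_{ij}$. Normality is indispensable here: it ensures that a local ring of $Y$ is recovered from its localizations at height-one primes, i.e.~from the DVRs of $k(Y)$ it dominates, so that matching of local rings at two points can be detected valuatively, which is exactly the kind of condition captured by the Zariski closure of the diagonal. Without the normality hypothesis, $\sim$ may fail to be transitive, and the construction yields only an algebraic space rather than a scheme.
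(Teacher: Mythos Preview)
Your approach is genuinely different from the paper's: you try to \emph{identify} points so as to obtain an everywhere-defined morphism $\s:Y\to\Ysep$, whereas the paper \emph{removes} loci from $Y$ and takes $\s$ to be only the rational inverse of an open immersion $\Ysep\hookrightarrow Y$. Unfortunately the identification route has a real gap, and it lies in the direction you did not flag as difficult: the forward implication $(y_i,y_j)\in W_{ij}\Rightarrow \CO_{Y,y_i}=\CO_{Y,y_j}$ is false even for normal $Y$. Take $U_1=\A^2$ and $U_2=\Bl_0\A^2$, glued along $U_1\cap U_2=\A^2\setminus\{0\}$; this $Y$ is a normal prevariety. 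Here $W_{12}\subset \A^2\times\Bl_0\A^2$ is precisely the graph of the blow-down $\Bl_0\A^2\to\A^2$, so for every closed point $e$ of the exceptional curve $E$ one has $(0,e)\in W_{12}$, yet $\CO_{U_1,0}\neq\CO_{U_2,e}$ (one of $y/x$, $x/y$ lies in $\CO_{U_2,e}$ but neither lies in $\CO_{U_1,0}$). Consequently $\sim$ is not transitive: $e_1\sim 0\sim e_2$ for distinct $e_1,e_2\in E$, while $e_1\not\sim e_2$ inside the separated chart $U_2$. The quotient $\bigsqcup_i U_i/{\sim}$ is then not a scheme into which each $U_i$ embeds as an open subscheme.

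The underlying obstruction is that two affine charts of a normal prevariety may differ by a genuine birational modification rather than by an isomorphism of open subsets; normality does not rule this out. In such situations no separated scheme can receive local isomorphisms from \emph{all} of $Y$, and one is forced to discard a codimension-$\geq 2$ locus on one side --- which is exactly why the definition only requires $\s$ to be defined on a big open. In the example above a valid separation is $\Ysep=\Bl_0\A^2$, with $\s$ undefined at the single point $0\in U_1$; there is no morphism $Y\to\Bl_0\A^2$ extending the identity on $U_2$.
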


\begin{rem}
  In \cite{hausen:10a}, separated quotients of $T$-varieties are produced by considering the inverse limit of GIT-quotients. In this setting, the image of the quotient map into the GIT-limit gives a separation of $X^\circ/T$ and the distinguished component of the limit which contains the image coincides with the Chow-quotient introduced in \cite{altmann:06a}.
\end{rem}

To prove the proposition, we need several facts about centers of valuations.

\begin{defn}
 Consider a valuation $\nu$ of $k(Y)$. A center of $\nu$ is an irreducible closed subset $C \subset Y$ such that $\CO_{C, Y}\subseteq \CO_\nu$ and $\CO_{C,Y} \hookrightarrow \CO_\nu$ is a local ring homomorphism.
\end{defn}

\begin{lem}
  A prevariety $Y$ is separated if and only if every valuation of $k(Y)$ has at most one center.
\end{lem}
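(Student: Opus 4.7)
The plan is to deduce this as a concrete repackaging of the valuative criterion of separatedness, translating between centers of valuations and morphisms from spectra of valuation rings.

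The key dictionary to set up first: if $\nu$ is a valuation of $k(Y)$ with valuation ring $\CO_\nu \subseteq k(Y)$, then a closed irreducible subset $C \subseteq Y$ (equivalently, its generic point $\eta_C$) is a center of $\nu$ if and only if the local inclusion $\CO_{\eta_C,Y} \hookrightarrow \CO_\nu$ defines a morphism $\spec \CO_\nu \to Y$ whose image of the generic point is the generic point of $Y$ and whose image of the closed point is $\eta_C$. Thus centers of $\nu$ in $Y$ correspond bijectively to morphisms $\spec \CO_\nu \to Y$ compatible with the inclusion of $\spec k(Y)$.

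For the forward implication, I would suppose $Y$ is separated and that $\nu$ has two centers $\eta_1, \eta_2$. The dictionary produces two morphisms $f_1, f_2 : \spec \CO_\nu \to Y$ which agree on the generic point. Since $Y$ is separated over $k$, the equalizer of $(f_1, f_2)$ is a closed subscheme of $\spec \CO_\nu$ containing the generic point, hence equals all of $\spec \CO_\nu$. So $f_1 = f_2$ and in particular $\eta_1 = \eta_2$.

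For the reverse implication, I would argue by contrapositive. Suppose $Y$ is not separated, so that the diagonal $\Delta \colon Y \hookrightarrow Y \times_k Y$ is a locally closed immersion which is not closed. Set $Z = \overline{\Delta(Y)}$, an integral closed subscheme of $Y \times_k Y$ in which $\Delta(Y)$ is open and dense; in particular $k(Z) = k(Y)$. Choose any point $z \in Z \setminus \Delta(Y)$; because $z$ lies outside the diagonal, the two projections $\pi_1, \pi_2 \colon Z \to Y$ satisfy $\pi_1(z) \neq \pi_2(z)$. The one nontrivial input is the standard fact (Zorn's lemma on dominating local subrings of $k(Y)$) that the local ring $\CO_{z,Z}$ is dominated by some valuation ring $\CO_\nu \subseteq k(Y)$. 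Composing with $\pi_1$ and $\pi_2$ exhibits $\pi_1(z)$ and $\pi_2(z)$ as two distinct centers of $\nu$ in $Y$, contradicting the hypothesis.

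The only place that is not purely formal is the existence of a valuation of $k(Y)$ dominating $\CO_{z, Z}$, but this is classical, so the main obstacle is really just unwinding the definitions and being careful that $k(Z) = k(Y)$ so that a valuation of $k(Z)$ is genuinely a valuation of the function field of $Y$.
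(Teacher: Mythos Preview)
Your proof is correct. The paper's own proof is simply a citation to Hartshorne's valuative criterion of separatedness \cite[Theorem~II.4.3]{hartshorne:77a}, and what you have written is precisely the unpacking of that criterion in the language of centers: you set up the bijection between centers of $\nu$ and morphisms $\spec \CO_\nu \to Y$ under $\spec k(Y)$, then run the standard two directions.

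One small point worth making explicit for the reader: the assertion that $z \notin \Delta(Y)$ forces $\pi_1(z)\neq\pi_2(z)$ is correct but not entirely tautological, since in general a point of $Y\times_k Y$ off the diagonal can have equal projections. The reason it holds here is that both projections $\pi_i|_Z:Z\to Y$ induce the identity on function fields (as $\Delta(Y)$ is dense in $Z$), so the two local maps $\CO_{\pi_i(z),Y}\to\CO_{z,Z}\subset k(Y)$ are both the canonical inclusion; hence if $\pi_1(z)=\pi_2(z)$ the two morphisms $\spec\CO_{z,Z}\to Y$ coincide, forcing the map $\spec\CO_{z,Z}\to Y\times_k Y$ to factor through $\Delta$, contradicting $z\notin\Delta(Y)$. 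You allude to this with ``because $z$ lies outside the diagonal,'' which is fine, but spelling it out would remove any doubt.
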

\begin{proof}
See \cite[Theorem 4.3]{hartshorne:77a} 
\end{proof}

\begin{lem}
  \label{lem:image-center}
  Consider dominant morphism $\phi:Y \rightarrow Y'$  from a prevariety $Y$ to a prevariety $Y'$ and a valuation $\nu$ of $k(Y)$ with center $C \subset Y$. Then $C':=\overline{\phi(C)}$ is a center of $\nu|_{k(Y')}$.
\end{lem}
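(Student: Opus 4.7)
My plan is to unpack the definition of ``center'' in terms of local rings at generic points and then transport the two required properties (containment in the valuation ring, locality of the inclusion) across the map $\phi$.

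First, let $\eta$ be the generic point of $C$, so that $\CO_{C,Y}=\CO_{Y,\eta}$. Because $\phi$ is continuous and $C=\overline{\{\eta\}}$, the image set $\phi(C)$ has closure $\overline{\{\phi(\eta)\}}$, so $\eta':=\phi(\eta)$ is the generic point of $C'$ and $\CO_{C',Y'}=\CO_{Y',\eta'}$. The morphism $\phi$ then induces a local ring homomorphism
\[
	\phi^\sharp_{\eta}\colon\CO_{Y',\eta'}\longrightarrow \CO_{Y,\eta}.
\]
Since $\phi$ is dominant, $\phi^\sharp_\eta$ is injective when viewed as a map between subrings of the function fields, and it is compatible with the inclusion $k(Y')\hookrightarrow k(Y)$ induced by $\phi$.

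Next I would compose with the hypothesis $\CO_{C,Y}\subseteq \CO_\nu$. This gives an inclusion $\CO_{Y',\eta'}\hookrightarrow \CO_\nu$ of subrings of $k(Y)$; but since $\CO_{Y',\eta'}\subseteq k(Y')$, the image actually lies in $\CO_\nu\cap k(Y')=\CO_{\nu|_{k(Y')}}$. This establishes the first half of the center condition, namely $\CO_{C',Y'}\subseteq \CO_{\nu|_{k(Y')}}$.

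It remains to verify locality of this inclusion, i.e.\ $\mathfrak{m}_\nu\cap\CO_{C',Y'}=\mathfrak{m}_{C',Y'}$. This is the only place where both hypotheses are used together: the inclusion $\CO_{C,Y}\hookrightarrow\CO_\nu$ is local by assumption, so $\mathfrak{m}_\nu\cap\CO_{C,Y}=\mathfrak{m}_{C,Y}$, and $\phi^\sharp_\eta$ is local as a map of local rings coming from a morphism of schemes, so $\mathfrak{m}_{C,Y}\cap\CO_{C',Y'}=\mathfrak{m}_{C',Y'}$. Intersecting $\CO_{C',Y'}\subseteq\CO_{C,Y}\subseteq\CO_\nu$ with $\mathfrak{m}_\nu$ and applying the two equalities in succession gives the desired locality.

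I don't anticipate a real obstacle here; the only mildly subtle point is being careful that the maps $\CO_{Y',\eta'}\to\CO_{Y,\eta}\hookrightarrow\CO_\nu$ all live compatibly inside the ambient inclusion $k(Y')\hookrightarrow k(Y)$, which is what lets us identify the image with a subring of $\CO_{\nu|_{k(Y')}}$.
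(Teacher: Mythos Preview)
Your proof is correct and follows the same approach as the paper: both argue that $\phi$ induces a local ring homomorphism $\CO_{C',Y'}\hookrightarrow\CO_{C,Y}$, and then compose with the local inclusion $\CO_{C,Y}\hookrightarrow\CO_\nu$ to obtain a local inclusion $\CO_{C',Y'}\hookrightarrow\CO_{\nu|_{k(Y')}}$. Your write-up simply unpacks the details (identifying generic points, checking $\CO_\nu\cap k(Y')=\CO_{\nu|_{k(Y')}}$, and verifying locality via the two intersection equalities) that the paper leaves implicit.
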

\begin{proof}
Note that we have a local ring homomorphism $\CO_{Y',C'} \hookrightarrow \CO_{Y,C}$ induced by $\phi$ and a local ring homomorphism $\CO_{Y,C} \hookrightarrow \CO_\nu$ by the definition of a center. Hence, the composition is a local ring homomorphism as well. 
\end{proof}

\begin{defn}
A \emph{multiple center} of a prevariety $Y$ is a closed subset $C\subset Y$ which is the center of some valuation $\nu$, such that $\nu$ has more than one center.
  We define the \emph{non-separated locus} of a prevariety $Y$ to be the union of all multiple centers.
\end{defn}

\begin{lem}
  The locus of non-separateness of a prevariety $Y$ is a Zariski closed subset.
\end{lem}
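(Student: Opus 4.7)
The plan is to characterize the non-separated locus explicitly in terms of a finite affine cover of $Y$, so as to recognize it as a finite union of closed subsets. Concretely, I would take a finite open affine cover $Y = U_1 \cup \cdots \cup U_n$. Each $U_i$ is separated, and so is each intersection $U_{ij} := U_i \cap U_j$ (open in the separated $U_i$), so the diagonal embedding $\Delta_{ij}\colon U_{ij}\to U_i\times_k U_j$ is a locally closed immersion. Let $Z_{ij}\subset U_i\times_k U_j$ be the closure of $\Delta_{ij}(U_{ij})$ and put $W_{ij} = Z_{ij}\setminus \Delta_{ij}(U_{ij})$; since $\Delta_{ij}(U_{ij}) = Z_{ij}\cap (U_{ij}\times U_{ij})$ is open in $Z_{ij}$, the subset $W_{ij}$ is closed in $U_i\times_k U_j$. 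Writing $p_1$ for the first projection, the central claim will be the equality
\[
\text{the non-separated locus of } Y \;=\; \bigcup_{i,j}\overline{p_1(W_{ij})}^{\,Y},
\]
whose right-hand side, being a finite union of closed subsets, is closed; this identity will prove the lemma.

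For the inclusion $\subseteq$, I would take a multiple center $C$ arising from a valuation $\nu$ with companion center $C'\ne C$, and choose $U_i\ni \eta_C$ and $U_j\ni \eta_{C'}$. The two morphisms $\spec \CO_\nu\to Y$ coming from the domination relations factor through $U_i$ and $U_j$ respectively, and combine to a morphism $\spec \CO_\nu\to U_i\times_k U_j$ sending the generic point to the generic point of $\Delta_{ij}(U_{ij})$ and the closed point to $(\eta_C,\eta_{C'})$. Hence $(\eta_C,\eta_{C'})\in Z_{ij}$, and since $\eta_C\ne \eta_{C'}$ as points of $Y$ the pair lies off $\Delta_{ij}(U_{ij})$, hence in $W_{ij}$; this gives $\eta_C\in p_1(W_{ij})$ and therefore $C = \overline{\{\eta_C\}}^{\,Y}\subset \overline{p_1(W_{ij})}^{\,Y}$.

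For the reverse inclusion, given $(y,y')\in W_{ij}$, the valuative criterion applied inside the irreducible $Z_{ij}$ would produce a valuation ring $R$ of $k(Y)$ and a morphism $\spec R\to Z_{ij}$ specializing the generic point to $(y,y')$. Composing with $p_1$, $p_2$ and invoking Lemma \ref{lem:image-center} then shows that $\overline{\{y\}}^{\,Y}$ and $\overline{\{y'\}}^{\,Y}$ are centers in $Y$ of a common valuation, distinct because $y\ne y'$ in $Y$, so $\overline{\{y\}}^{\,Y}$ is a multiple center. By Chevalley's theorem $p_1(W_{ij})$ is constructible in $U_i$, so its closure in $U_i$ has finitely many irreducible components whose generic points $\eta_k$ lie in $p_1(W_{ij})$; consequently $\overline{p_1(W_{ij})}^{\,Y} = \bigcup_k \overline{\{\eta_k\}}^{\,Y}$ is a finite union of multiple centers, contained in the non-separated locus.

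The hard part will be the distinctness argument: promoting $(y,y')\in W_{ij}$ to two \emph{different} centers of a common valuation in $Y$ requires knowing $y\ne y'$ as points of $Y$, which rests on the identification $\Delta_{ij}(U_{ij})= Z_{ij}\cap (U_{ij}\times U_{ij})$ together with the separateness of each $U_{ij}$. The remaining steps are careful bookkeeping of closures in $U_i$ versus in $Y$.
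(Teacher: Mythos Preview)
Your proof is correct and follows essentially the same route as the paper's: your $Z_{ij}$ is exactly the paper's $\widetilde U_{ij}=\spec(A_iA_j)$ (the closure of the diagonal in $U_i\times_k U_j$ has coordinate ring the image of $A_i\otimes_k A_j\to k(Y)$), your projections $p_1,p_2$ are the paper's $f_{ij},f_{ji}$, and your $W_{ij}$ coincides with the paper's indeterminacy locus $V_{ij}$ of $\phi_{ij}$. The only cosmetic difference is that you handle closures via Chevalley's theorem and generic points, whereas the paper works with the components of $V_{ij}$ directly; both arguments establish the same identity for the non-separated locus.
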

\begin{proof}
    We consider some open affine covering $\{U_i\}_{i\in I}$ of $Y$ and denote $U_i \cap U_j$ by $U_{ij}$. We set $A_i = \CO_Y(U_i)$ and denote the sub-algebra of the function field generated by $A_i$ and $A_j$ by $A_iA_j$ and its spectrum by $\widetilde{U}_{ij}$. We have a birational map $\phi_{ij}: \widetilde{U}_{ij} \dashrightarrow {U}_{ij}$ and a commutative diagram as follows
$$
\xymatrix{
& \widetilde{U}_{ij}\ar@{-->}^{\phi_{ij}}[dd] \ar^{f_{ij}}[rd] \ar_{f_{ji}}[ld] & \\
U_i & & U_j\\
& {U}_{ij}  \ar@{_{(}->}[lu] \ar@{^{(}->}[ru]&
}
$$

Now, we denote the indeterminacy locus of $\phi_{ij}$ by $V_{ij}$. We claim that 
\[\bigcup_{i,j \in I} \overline{f_{ij}(V_{ij})} \subset Y\]
equals the locus of non-separateness.

Assume we have a point $y$ in this finite union. This means there is a pair $(i,j)$ and component of the $C$ of $V_{ij}$ such that $y \in \overline{f_{ij}(C)}$. Now, we may choose a valuation $\nu$, which has center $C$. This implies that $f_{ij}(C)$ and $f_{ji}(C)$  are centers of $\nu$, as well. Since $C$ lies in the locus of indeterminacy, $f_{ij}(C)$ and $f_{ji}(C)$  do not intersect $U_{ij}$. Hence, $\overline{f_{ij}(C)} \neq \overline{f_{ji}(C)}$. Hence, $y$ lies in the locus of non-separateness.

Assume instead that we have a point $y$ in the non-separated locus. This means it belongs to some multiple center $V$ of some valuation $\nu$. Hence, we have another center $V'$ of the same valuation. They cannot both intersect the same affine chart, since affine varieties are separated. Hence, we have two charts $U_i$ and $U_j$, such that $U_i \cap V' = U_j \cap V = \emptyset$ but $U_i \cap V \neq \emptyset$ and $U_j \cap V' \neq \emptyset$. In particular $\nu$ has no center on the intersection $U_{ij}$. The fact that $\nu$ has a center on $U_i$ and $U_j$ is equivalent to the inclusions of the coordinate ring $A_i,A_j \subset \CO_\nu$. But then we have $A_iA_j \subset \CO_{\nu}$ as well. Hence, $\nu$ has a center $C$ on $\widetilde{U}_{ij}$ and we have $\overline{f_{ij}(C)} = V$ and $\overline{f_{ij}(C)} = V'$, by Lemma \ref{lem:image-center}.  Since $\nu$ has no center on $U_{ij}$, it follows that $C$ is contained in the indeterminacy locus $V_{ij}$ of $\phi_{ij}$. Hence, $V$ and $V'$ are contained in $\overline{f_{ij}(V_{ij})}$ and $\overline{f_{ji}(V_{ij})}$ respectively.
\end{proof}

\begin{proof}[Proof of Proposition \ref{prop:sep}]
Consider the non-separated locus inside the prevariety $Y$. From the components of codimension $1$, several components have the same local ring. For every one of the local rings occurring, choose one of these components and remove the rest. The remaining prevariety $Y'$ is ``separated in codimension one'', i.e. the non-separated locus $V$ of $Y'$ has codimension $>1$. If we remove this locus from $Y'$ we obtain a variety $\Ysep=Y'\setminus V$. Now, the rational map $\s$ is just the inverse of the inclusion $\Ysep \hookrightarrow Y$. Let $D$ be a prime divisor in $Y$. Then by construction, there is a prime divisor $D' \subset \Ysep$  with the same local ring as $D$. In other words, for every prime divisor in $Y$ there is an open subset intersecting $D$ which is mapped isomorphically to an open subset of $\Ysep$.
\end{proof}
\begin{rem}
If $Y$ is a smooth prevariety of dimension one, then it admits a unique separation $\s:Y\to \Ysep$, and $\s$ is a morphism. 
\end{rem}

Consider a separation $\s:Y\dashrightarrow \Ysep$. For any $\QQ$-divisor $D=\sum_{P\subset Y}a_P\cdot P$ on $Y$, we define 
$$
\smax D :=\sum_{P'\subset \Ysep}\max\{a_P\ |\ P\subset \s^{-1}(P')\}\cdot P'.
$$
 With this we have $\s_*\CO(-D) = \CO(-\smax D)$.

A separation of a pair $(Y,\Delta)$ consists of a separation $\s:Y \dashrightarrow  \Ysep$ along with the $\QQ$-divisor $\Deltasep:=\smax\Delta$ on $\Ysep$.
By Proposition \ref{prop:sep}, such a separation always exists, although it is not necessarily unique.

\begin{rem}
Note that $\Deltasep$ is the unique minimal divisor on $\Ysep$ such that $\s^*(\Deltasep) \geq \Delta$.
\end{rem}

We may use the following result, coupled with Theorem \ref{thm:main} and Proposition \ref{prop:comp2} to characterize (compatible) $F$-splittings and $F$-regularity of a $T$-variety in terms of properties of a separated quotient.
\begin{thm}\label{thm:sep}
  Consider a normal pair $(Y,\Delta)$ and a separation $\s:(Y,\Delta) \dashrightarrow (\Ysep,\Deltasep)$. 
\begin{enumerate}
\item\label{claim:one} The pair $(Y,\Delta)$ is $F$-split ($F$-regular) if and only if $(\Ysep,\Deltasep)$ is $F$-split ($F$-regular). 
\item\label{claim:two} Let $S\subset Y$ be a closed subscheme such that $\overline{U\cap S}=S$, where $U$ is the open subset of $Y$ on which $\s$ is regular. Then  $(Y,\Delta)$ is $F$-split is compatible with $S$ if and only if $(\Ysep,\Deltasep)$ is $F$-split compatible with $\overline{\s(S)}$.
\end{enumerate}
\end{thm}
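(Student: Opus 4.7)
The plan is to leverage the fact that, on a normal scheme, both $F$-splittings and the $\Hom$-groups controlling $F$-regularity (see Remark \ref{rem:duality}) are sections of reflexive sheaves, hence determined by restriction to any big open. By the definition of separation, we are handed a big open $U \subset Y$ and an isomorphism $\s|_U \colon U \xrightarrow{\sim} U' \subset \Ysep$ onto a big open $U'$; the argument reduces to a careful comparison across this identification.

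For part \ref{claim:one}, the guiding divisor inequality is $\Delta|_U \leq \s|_U^*(\Deltasep|_{U'})$, which holds because $\Deltasep = \smax \Delta$ takes the maximum coefficient over each class of prime divisors in $\s^{-1}(P')$. The $\Leftarrow$ direction is then immediate: a splitting of $(\Ysep, \Deltasep)$ restricts to $U'$, pulls back along $\s|_U$ to $U$, restricts further to a splitting of the smaller pair $(U, \Delta|_U)$, and extends uniquely to $(Y,\Delta)$ by the $S_2$-property; the $F$-regular case works identically by chasing the splittings guaranteed by Theorem \ref{thm:schwede} through the same steps. For the $\Rightarrow$ direction the key observation is that duplicate prime divisors $P_1,\dots,P_k \subset \s^{-1}(P')$ all determine the same DVR inside $k(Y) = k(\Ysep)$: so if a splitting $\sigma$ of $(Y,\Delta)$ extends along $\CO_Y \to \CO_Y(\lceil (p-1)a_{P_i}\rceil\, P_i)$ for every $i$, it automatically extends at this common local ring to pole order $\lceil (p-1)\max_i a_{P_i}\rceil$. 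This promotes $\sigma|_U$ from a $\Delta|_U$-splitting to a $\s|_U^*(\Deltasep|_{U'})$-splitting, which then transfers to $U'$ and extends uniquely to the desired splitting of $(\Ysep, \Deltasep)$. The $F$-regular analogue follows the same blueprint: given a test divisor $D' > 0$ on $\Ysep$, take $D$ on $Y$ with coefficient $b_{P'}$ at every prime of $\s^{-1}(P')$, apply $F$-regularity of $(Y,\Delta)$ to find $e > 0$ and a splitting of $\CO_Y \to F_*^e\CO_Y(\lceil (p^e-1)\Delta\rceil + D)$, and then run the same coefficient upgrade.

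For part \ref{claim:two}, the hypothesis $\overline{U\cap S} = S$ together with bigness of $U$ ensures $\I_S = j_* (\I_S|_U)$ for the open inclusion $j\colon U\hookrightarrow Y$, so compatibility with $S$ is equivalent to compatibility with $S \cap U$. Given a compatible splitting of $(Y,\Delta)$, restrict to $U$, transfer across $\s|_U$ to $U'$, and extend to $\Ysep$; since every component of $S$ meets $U$, its image is dense in $\overline{\s(S)}$, so the extended splitting is compatible with $\overline{\s(S)}$. The converse is the mirror argument, combined with the coefficient upgrade of part \ref{claim:one} to align divisors.

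The main obstacle is this coefficient-upgrade step in the $\Rightarrow$ direction: one must verify that the global splitting on $Y$, restricted to the common local ring of a class of duplicates, really does extend to the maximum coefficient $\max_i a_{P_i}$, and that the resulting extension matches the one produced by reflexively transporting through $\s|_U$. Everything else is a bookkeeping exercise with the $S_2$-property.
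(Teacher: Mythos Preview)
Your core observation is correct and matches the paper's: since duplicate prime divisors $P_1,\dots,P_k\subset\s^{-1}(P')$ all share the same DVR inside $k(Y)$, the constraint imposed at that DVR by a map $\sigma:F_*^e\CO_Y(\lceil(p^e-1)\Delta\rceil)\to\CO_Y$ is automatically the one coming from the largest coefficient $\max_i a_{P_i}$. That is precisely the mechanism driving the paper's proof.

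However, your setup contains a genuine misconception. By the definition of separation, $\s|_U:U\to\Ysep$ is only a \emph{local} isomorphism onto a big open $U'$; it is \emph{not} in general a global isomorphism. (Take $Y$ the line with doubled origin and $\Ysep=\A^1$: both origins map to $0$.) Indeed, if $\s|_U$ were an isomorphism then $U$ could contain at most one $P_i$ from each duplicate class, and your coefficient-upgrade step would be vacuous---so your argument is internally inconsistent. The ``transfers to $U'$'' step is ill-defined as written, since you cannot invert a non-injective map.

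The fix is what the paper does. After removing codimension-$\geq 2$ loci on both sides (harmless by normality), assume $\s:Y\to\Ysep$ is an everywhere-defined surjective local isomorphism. Then pushforward along $\s$ yields directly
\[
\Hom_{\CO_\Ysep}\!\bigl(F_*^e\CO_\Ysep(\lceil(p^e-1)\Deltasep\rceil+D),\CO_\Ysep\bigr)\;\cong\;\Hom_{\CO_Y}\!\bigl(F_*^e\CO_Y(\lceil(p^e-1)\s^*\Deltasep\rceil+\s^*D),\CO_Y\bigr),
\]
and your DVR observation shows the right-hand side equals $\Hom_{\CO_Y}\!\bigl(F_*^e\CO_Y(\lceil(p^e-1)\Delta\rceil+\s^*D),\CO_Y\bigr)$. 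This packages both directions of the $F$-split statement at once, handles $F$-regularity with the single remark that $\s^*(\s_*C)\geq C$ for any test divisor $C$ on $Y$, and (via \cite[Lemma~1.1.7]{brion:05a}) dispatches part~\ref{claim:two} immediately. Your direction-by-direction argument is salvageable once you replace ``isomorphism'' by ``local isomorphism'' and use pushforward rather than inverse, but it is more laborious than necessary.
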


\begin{proof}
Due to our normality assumption, we may without loss of generality assume that $\s$ is regular on all of $Y$ with image $\Ysep$. Now, since $\s$ is a local isomorphism between the pairs $(\Ysep,\Deltasep)$ and $(Y,\s^*\Deltasep)$, we have an isomorphism between
$$
\Hom_{\CO_\Ysep}(F_*^e\CO_\Ysep(\lceil (p^e-1)\Deltasep\rceil+D),\CO_\Ysep)
$$
and 
\begin{equation}\label{eqn:yhom}
\Hom_{\CO_Y}(F_*^e\CO_Y(\lceil (p^e-1)\s^*(\Deltasep)\rceil+\s^*(D)),\CO_Y)
\end{equation}
for any effective divisor $D$ on $Y$ which preserves the property of being a splitting.
Furthermore, \eqref{eqn:yhom} is equal to 
\begin{equation*}
\Hom_{\CO_Y}(F_*^e\CO_Y(\lceil (p^e-1)\Delta\rceil+\s^*(D)),\CO_Y)
\end{equation*}
by definition of $\Deltasep$.
This proves claim \ref{claim:one} with regards to $F$-splitting. For $F$-regularity, note that $\s^*(\s_*(C))\geq C$ for any divisor $C$ on $Y$, and the claim follows.
Claim \ref{claim:two} follows immediately from the above isomorphism and \cite[Lemma 1.1.7]{brion:05a}.
\end{proof}

We are now going to reformulate the description of splittings of $X^\circ$ (Lemma \ref{lemma:piotr}) in terms of $\Ysep$. Remember from \S\ref{ss:setup}  that $X^\circ$ has a description as \[X^\circ \cong \spec_Y \bigoplus_{u\in M} \CO\left(\lfloor \D(s(u))\rfloor\right)\cdot\chi^u\]
where $\D:\CH(T)\to\CaDiv_\QQ Y$.
We define $h:M\to \CaDiv_\QQ \Ysep$ by $h(u) = \smax((p-1)\Delta+\D(s(u)))$. 
If $H$ is a torus, i.e. $M$ is torsion-free and $s$ is the identity, we may view $h$ as a convex and piecewise linear function on $M_\QQ$.

\begin{lem}
  \label{lemma:homs-separated} For $u \in (p-1)P_X$ we have
  $$\Hom_{\CO_X}(F_*\CO_X,\CO_X)_u\cong  \Hom_{\CO_Y}(F_*\CO_\Ysep(\lceil h(u) \rceil),\CO_\Ysep).$$
\end{lem}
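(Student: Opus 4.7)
The plan is to chain together three reductions, two of which are immediate appeals to earlier lemmas and one of which is the substantive step. First I would use Lemma \ref{lemma:extends}: the assumption $u \in (p-1)P_X$ identifies
\[
\Hom_{\CO_X}(F_*\CO_X, \CO_X)_u \cong \Hom_{\CO_{X^\circ}}(F_*\CO_{X^\circ}, \CO_{X^\circ})_u.
\]
Next, Lemma \ref{lemma:piotr} applied with the trivial divisor $E = 0$ transports this to
\[
\Hom_{\CO_Y}\bigl(F_*\CO_Y(\lceil (p-1)\Delta + \D(s(u))\rceil), \CO_Y\bigr).
\]
The only real work is then to move this last group from the possibly non-separated $Y$ to $\Ysep$ via the separation $\s:Y\dashrightarrow \Ysep$.

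For that third step, the plan is to embed both $\Hom$ groups in the common ambient space $\Hom_K(F_*K, K)$ with $K = k(Y) = k(\Ysep)$ by restriction to the generic point (injective since both target sheaves are torsion-free). By Lemma \ref{lem:dsplit}, the image of the $Y$-side consists of those $\tau$ for which $\nu_P(f) \geq -a_P + p$ forces $\nu_P(\tau f) \geq 1$ for every prime divisor $P\subset Y$, where $a_P$ is the coefficient of $E := \lceil (p-1)\Delta + \D(s(u))\rceil$ on $P$, and analogously the $\Ysep$-side is cut out by such implications indexed by primes of $\Ysep$ using the coefficients of $\lceil h(u)\rceil$.

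The core observation is then the following: by the construction in Proposition \ref{prop:sep}, prime divisors of $\Ysep$ correspond bijectively to equivalence classes of prime divisors of $Y$ that determine the same valuation on $K$. So if $P_1,\ldots,P_m \subset Y$ all map to the single prime $P'\subset \Ysep$, the conjunction of the $m$ valuative implications for the $P_i$ is logically equivalent to the \emph{single} implication for $\nu_{P'}$ with coefficient $\max_i a_{P_i}$, since the weakest sufficient hypothesis among equivalent conclusions is the one with the smallest lower bound on $\nu_{P'}(f)$, i.e.\ $-\max_i a_{P_i} + p$. This exactly matches the definition of $\smax E$, so the defining conditions of the two $\Hom$ subspaces of $\Hom_K(F_*K,K)$ coincide provided $\smax E = \lceil h(u)\rceil$. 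The latter identity follows from the elementary fact that ceiling commutes with finite maxima:
\[
\smax \lceil (p-1)\Delta + \D(s(u))\rceil \;=\; \lceil \smax\bigl((p-1)\Delta + \D(s(u))\bigr)\rceil \;=\; \lceil h(u)\rceil.
\]

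The proof poses no deep obstacle given the machinery built up; the one point requiring care is the interpretation of $\s$ at the level of prime divisors, namely that $\s$ identifies primes of $Y$ sharing a common DVR on $K$, which is precisely the content of the construction of $\Ysep$. Once that dictionary is in place, Lemma \ref{lem:dsplit} and the monotonicity of the ceiling function do all the remaining work.
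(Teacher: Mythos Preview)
Your proposal is correct and follows essentially the same route as the paper. The paper's proof is terse: it leaves the first two reductions (via Lemmas~\ref{lemma:extends} and~\ref{lemma:piotr}) implicit and for the passage from $Y$ to $\Ysep$ simply invokes ``as in the proof of Theorem~\ref{thm:sep}'', whereas you spell out the valuative argument directly using Lemma~\ref{lem:dsplit} and the fact that the primes of $Y$ lying over a given prime of $\Ysep$ share a common valuation on $K$. Your explicit verification that $\smax$ commutes with $\lceil\cdot\rceil$ is exactly the content hidden behind the paper's appeal to the definition of $\Deltasep$ and $h$.
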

\begin{proof}
As in the proof of Theorem~\ref{thm:sep} we have 
$$
\Hom_{\CO_\Ysep}(F_*\CO_\Ysep(\smax(\lceil(p-1)\Delta\rceil+\D(s(u)))),\CO_\Ysep)
$$
being equal to 
\[
\Hom_{\CO_Y}(F_*\CO_Y(\lceil (p-1)\Delta\rceil+\D(s(u))),\CO_Y). \qedhere
\]
\end{proof}

\begin{ex}[Blowup of a flag variety (continued)]\label{ex:bl1}
 For $W^\circ = \widetilde W^\circ$ from Example~\ref{ex:main} we had as a non-separated quotient the projective line with doubled points $\{0,1,\infty\}$. The separation is just the ordinary $\PP^1$ and the morphism $W^\circ \rightarrow Y \to \Ysep=\PP^1$ is again given by 
  \[\pi:\PP^2 \times \PP^2 \dashrightarrow \PP^1;\quad (x_0:x_1:x_2,\; z_0:z_1:z_2) \mapsto (x_0z_0:x_1z_1).\]

 The piecewise linear function $h:P_W \rightarrow \Div_\QQ \PP^1$ defined by $h(u):=\smax \D(u)$ is given by
\[ h(a,b) =  \max\{-a,0\}[1] \;+\; \max\{-b,0\}[1] \;+\; \max\{a+b,0\}[\infty].\]
For $\widetilde W$ we obtain just the restriction $h|_{P_{\widetilde W}}$.

Since $(\Ysep,\Deltasep) = (\PP^1,0)$ for $W$ and $\widetilde W$ we deduce by Theorem~\ref{thm:main} and Theorem~\ref{thm:sep}
that both varieties are $F$-regular (and hence $F$-split) for every prime $p$.

We continue our discussion in Example \ref{ex:bl2}, showing that both varieties are \emph{diagonally split}.
\end{ex}

\section{Special Cases}\label{sec:special}
In this section, we consider some special cases and examples of $H$-varieties where criteria for $F$-splitting and $F$-regularity simplify.
\subsection{Cyclic Covers}\label{sec:cyclic}
Let $X$ be a normal $n$-fold cyclic cover of a normal variety $Y$ with reduced branch divisor $D$, and assume that $n$ is relatively prime to $p$. Let $\Delta$ be the boundary divisor as in \S \ref{ss:setup}. 
Then by Theorem \ref{thm:main}, we have that $X$ is $F$-split ($F$-regular) if and only if 
$(Y,\Delta)$ is $F$-split ($F$-regular).
Note that the support of $\Delta$ is exactly $D$, and $\Delta$ is of the form
\[\Delta=\sum_i \frac{n_i-1}{n_i}D_i,\]
where the $D_i$ are the irreducible components of $D$ and each $n_i$ divides $n$.
If the ramification index of every point $x\in X$ in the ramification locus is equal to $n$, then we simply have \[\Delta=\frac{n-1}{n}D.\]
Note that our result for cyclic covers is simply a special case of \cite{schwede:10b}, which gives criteria for $F$-splitting and $F$-regularity to preserved under arbitrary finite morphisms with tame ramification.

\begin{prop}\label{prop:dc}
Let $X$, $Y$, $\Delta$ and $D$ be as above with $\Delta=\frac{n-1}{n}D$ and $X$, $Y$ projective.
Suppose that $\CO_Y((n-1)D)\cong \omega_Y^{-n}$.
Then $X$ is $F$-split if and only if:
\begin{enumerate}
\item  We have $p\equiv 1 \pmod n$, that is, $p-1=\alpha n$ for some $\alpha\in \NN$; and\label{crit:one}
\item The isomorphism $\CO_Y((n-1)D)\cong \omega_Y^{-n}$ induces a non-zero map $$\phi:\CO_Y(\alpha(n-1)D)\to \omega_Y^{1-p},$$ and a multiple of $\phi(1)$ corresponds to an $F$-splitting of $Y$ under Grothendieck duality.\label{crit:two} 
\end{enumerate} 
Furthermore,  $X$ is never $F$-regular.
\end{prop}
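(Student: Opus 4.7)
The plan is to apply Theorem~\ref{thm:main} to reduce $F$-splitting and $F$-regularity of $X$ to the corresponding properties of the pair $(Y,\Delta)$, and then translate those into the existence of sections of explicit line bundles on $Y$ via Grothendieck duality (Remark~\ref{rem:duality}). Concretely, for each $e\geq 1$ and each effective divisor $C$ on $Y$ one has
$$\Hom_{\CO_Y}\!\left(F_*^e\CO_Y(\lceil(p^e-1)\Delta\rceil+C),\CO_Y\right)\;\cong\; H^0\!\left(Y,\,\omega_Y^{1-p^e}(-\lceil(p^e-1)\Delta\rceil-C)\right),$$
and the task will be to make this line bundle transparent.

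Writing $p^e-1=\alpha_e n+r_e$ with $0\leq r_e<n$, a short ceiling calculation gives $\lceil(p^e-1)\Delta\rceil=(\alpha_e(n-1)+r_e)\,D$. The hypothesis $\omega_Y^{-n}\cong\CO_Y((n-1)D)$ rewrites as the $\QQ$-linear equivalence $K_Y+\tfrac{n-1}{n}D\sim_{\QQ}0$ and as the line bundle identity $(\omega_Y(D))^n\cong\CO_Y(D)$. Substituting, I expect the clean formula
$$\omega_Y^{1-p^e}(-\lceil(p^e-1)\Delta\rceil-C)\;\cong\;(\omega_Y(D))^{-r_e}\otimes\CO_Y(-C).$$

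Taking $e=1$ and $C=0$ handles $F$-splitting. If $r\neq 0$, then the $n$-th tensor power of any non-zero section of $(\omega_Y(D))^{-r}$ would be a non-zero section of $\CO_Y(-rD)$, which is impossible since $rD$ is a non-zero effective divisor on the projective $Y$; hence $(Y,\Delta)$ admits no $F$-splitting. If $r=0$, the line bundle is trivial, so the Hom space is one-dimensional. I will check that, under the inclusion $H^0(\omega_Y^{1-p}(-\alpha(n-1)D))\hookrightarrow H^0(\omega_Y^{1-p})$, its distinguished generator is precisely $\phi(1)$, where $\phi:\CO_Y(\alpha(n-1)D)\to\omega_Y^{1-p}$ is the $\alpha$-th tensor power of the given isomorphism. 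Since the restriction map $\Hom(F_*\CO_Y(\alpha(n-1)D),\CO_Y)\to\Hom(F_*\CO_Y,\CO_Y)$ is injective, a non-zero element of the one-dimensional source is an $F$-splitting of $(Y,\Delta)$ precisely when some scalar multiple of $\phi(1)$ corresponds under Grothendieck duality to an $F$-splitting of $Y$; this is exactly condition~(2). For $F$-regularity, fix any prime divisor $C$ on $Y$: the same formula and the same $n$-th tensor power argument show $H^0(Y,(\omega_Y(D))^{-r_e}(-C))=0$ for every $e\geq 1$, since $r_eD+nC$ is non-zero and effective. Thus no splitting of $\CO_Y\to F_*^e\CO_Y(\lceil(p^e-1)\Delta\rceil+C)$ exists for any $e$, so $(Y,\Delta)$ is not $F$-regular.

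The main obstacle I anticipate is the $r=0$ case of the splitting argument: namely, tracing Grothendieck duality carefully enough to identify the generator of the one-dimensional Hom space with $\phi(1)\in H^0(\omega_Y^{1-p})$, and to see that being an $F$-splitting of $(Y,\Delta)$ is equivalent to the condition on $\phi(1)$ stated in~(2). The ceiling arithmetic in the second step is routine but must be done carefully.
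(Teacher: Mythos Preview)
Your proposal is correct and follows essentially the same approach as the paper: reduce to $(Y,\Delta)$ via Theorem~\ref{thm:main}, then analyze $H^0(Y,\omega_Y^{1-p^e}(-\lceil(p^e-1)\Delta\rceil-C))$ using the hypothesis $\omega_Y^{-n}\cong\CO_Y((n-1)D)$ and the projectivity of $Y$. Your explicit identification $\omega_Y^{1-p^e}(-\lceil(p^e-1)\Delta\rceil-C)\cong(\omega_Y(D))^{-r_e}\otimes\CO_Y(-C)$ is a slightly cleaner packaging of the paper's observation that $\mcL^n$ is a sub-bundle of $\CO_Y$ with equality iff $p\equiv 1\pmod n$, and your $n$-th tensor power argument for the vanishing when $r_e\neq 0$ or $C\neq 0$ makes the $F$-regularity claim more explicit than the paper, which simply asserts the vanishing.
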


\begin{proof}
By Theorem \ref{thm:main}, we are checking whether or not the pair $(Y,\frac{n-1}{n}D)$ is $F$-split (or $F$-regular). By Remark \ref{rem:duality}, maps $F_*\CO_Y(\lceil (p-1) \frac{n-1}{n}D \rceil)\to \CO_Y$ are given by sections of
\begin{equation*}
\mcL=\omega^{1-p}_Y\left(-\left\lceil (p-1) \frac{n-1}{n}D\right\rceil\right).
\end{equation*}
Now, $\mcL^n$ is a sub-bundle of 
\begin{equation}\label{eqn:ll}
\omega^{n(1-p)}\left(-(p-1)(n-1)D\right)\cong \CO_Y
\end{equation}
with equality if and only if $(p-1) \frac{n-1}{n}D$ is an integral divisor, that is $p\equiv 1 \pmod n$. 
Since $Y$ is projective, $H^0(Y,\mcL)$ is at most one-dimensional, and must vanish unless
 $p\equiv 1 \pmod n$. Hence,  condition \eqref{crit:one} must hold for $X$ to be $F$-split.

If $\mcL$ has no sections, then $X$ is not $F$-split; assume instead that the space of global sections is generated by some non-zero $f\in H^0(Y,\mcL)$. Then $f^n\in H^0(Y,\mcL^n)$ corresponds to the isomorphism  $\CO_Y(((p-1) (n-1)D)\cong \omega_Y^{(1-p)n}$ via \eqref{eqn:ll}, and $f$ induces a map $\phi$ as in condition \eqref{crit:two}. 
Hence, assuming condition \eqref{crit:one}, condition \eqref{crit:two} is necessary and sufficient for $F$-splitting. Furthermore,  $X$ is never $F$-regular, since again by duality,
$$
\Hom_{\CO_Y}\left(F_*^e \CO_Y\left(\left\lceil (p^e-1)\frac{n-1}{n}D\right\rceil+E\right),\CO_Y\right)=0
$$
for any non-trivial effective divisor $E$.
\end{proof}

\begin{ex}[Elliptic curves as double covers]\label{ex:elliptic}
Let $X$ be a smooth elliptic curve, and $p>2$. Then an affine model of $X$ can be given by 
$$
y^2=x(x-1)(x-\lambda), 
$$
for $\lambda\neq 0,1$ which realizes $X$ as a double cover of $\PP^1$ with branch divisor $D=\{0\}+\{1\}+\{\lambda\}+\{\infty\}$.
The curve $X$ is $F$-split if and only if it is ordinary \cite[1.3.9]{brion:05a}, and it is classically known that this is the case if and only if the coefficient of 
coefficient of $x^{(p-1)/2}$ in $(x-\lambda)^{(p-1)/2}(x-1)^{(p-1)/2}$ is non-zero \cite[Corollary 4.2]{hartshorne:77a}. 

We can easily recover this result using Proposition \ref{prop:dc}. Indeed, taking $1,x$ as a basis of $\CO(1)$ with $(1)_0=\{\infty\}$ and $(x)_0=\{0\}$, we have an isomorphism $\CO(D)\to \CO(4)$ sending $1$ to $x(x-1)(x-\lambda)$. The section $\phi(1)=(x(x-1)(x-\lambda))^{(p-1)/2}\in\CO(2(p-1))$ corresponds to a splitting of $\PP^1$ if and only if the coefficient of $x^{p-1}$ is non-zero, cf. Example \ref{ex:toricsplit}. But this is the same as requiring that the coefficient of $x^{(p-1)/2}$ in $(x-\lambda)^{(p-1)/2}(x-1)^{(p-1)/2}$ is non-zero.
\end{ex}

\begin{defn}\label{defn:ordinary}
Based on the above example, we say that the pair $(\PP^1,\frac{1}{2}(c_1+c_2+c_3+c_4))$ is \emph{ordinary} if and only if the coefficient of $x^{(p-1)/2}$ in $(x-\lambda)^{(p-1)/2}(x-1)^{(p-1)/2}$ is non-zero, where $\lambda$ is the cross-ratio $(c_1,c_2;c_3,c_4)$. By the above, the pair $(\PP^1,\frac{1}{2}(c_1+c_2+c_3+c_4))$ is $F$-split if and only if it is ordinary.
\end{defn}

\begin{ex}[Elliptic curves as triple covers]\label{ex:trielliptic}
In the situation of Proposition \ref{prop:dc}, we can also take $Y=\PP^1$, $n=3$, $D=\{0\}+\{1\}+\{\infty\}$. The curve $X$ is a triple cover of $\PP^1$, and is a smooth elliptic curve as long as $p>3$. By Proposition \ref{prop:dc}, $X$ is $F$-split if and only if $p\equiv 1 \pmod 3$. Indeed, 
we again have an isomorphism $\CO(D)\to \CO(3)$ sending $1$ to $(x(x-1))$
where we take a basis of $\CO(1)$ as in Example \ref{ex:elliptic}.
Then $\phi(1)\in\CO(2(p-1))$ is given by $x^{2(p-1)/3}(x-1)^{2(p-1)/3}$, and the coefficient of $x^{(p-1)}$ is clearly non-zero.
\end{ex}

\begin{ex}[K3 double covers]
Let $Y=\PP^2$, $D$ be a smooth sextic curve in $Y$, and $X$ a double cover of $Y$ ramified over $D$. Then $X$ is a smooth K3 surface.
If $f\in k[x,y,z]$ is a sextic polynomial such that $D=V(f)$ and $p>2$, then Proposition \ref{prop:dc} implies that $X$ if $F$-split if and only the coefficient of $(xyz)^{p-1}$ in $f^{p-1}$ is non-zero.

More generally, similar statements can be made for K3 surfaces arising as double covers of smooth toric surfaces $Y$. Indeed, let $P_Y$ be as in Example \ref{ex:toricsplit}. A smooth section $f$ of $\omega_Y^{-2}$ can be written as 
$$
f=\sum_{u\in 2P\cap M} a_u\chi^u
$$
and for $p>2$ the corresponding K3 double cover is $F$-split if and only if the constant term of $f^{p-1}$ is non-zero.
\end{ex}

\begin{ex}[A Fano threefold]
Consider a homogeneous quartic polynomial $f\in k[x,y,z,w]$, $\chara k>2$, and let $X$ be a double cover of $\PP^3$ with branch locus $V(f)$. Then $X$ is a smooth Fano threefold of degree $16$, which is $F$-split if and only if $(\PP^3,\frac{1}{2}V(f))$ is $F$-split by Theorem \ref{thm:main}. By Remark \ref{rem:duality} and Example \ref{ex:toricsplit}, $(\PP^3,V(f))$ is $F$-split if and only if the coefficient of 
$x^\alpha y^\beta z^\gamma w^\delta$
in $f^{(p-1)/2}$ is non-zero for some 
$\alpha,\beta,\gamma,\delta\in\NN$ with $\alpha,\beta,\gamma,\delta\leq p-1$.
For example, taking $f=x^4+y^4+z^4+w^4$, we see that $X$ is $F$-split if and only if $\chara k\geq 5$.

A similar analysis can be carried out for any cyclic cover of a toric variety.
\end{ex}

\subsection{Affine Quotients}\label{sec:affine}
\begin{defn}
We say that a pair $(Y,\Delta)$ is a \emph{toroidal} if the formal completion of $(Y,\Delta)$ at every closed point $y$ is isomorphic to the formal completion of a pair $(V_y,B_y)$, where $V_y$ is toric and $B_y$ is the toric boundary divisor. 
\end{defn}

\begin{thm}\label{thm:logsmooth}
Let $X$ be an $H$-variety where $H$ has no $p$-torsion, $(Y,\Delta)$ as in \S \ref{ss:setup}, and $(\Ysep,\Deltasep)$ any separation. Assume that $\Ysep$ is affine and $(\Ysep,\lceil \Deltasep \rceil)$ is toroidal. Then $X$ is $F$-regular.
\end{thm}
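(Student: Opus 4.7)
My plan is to combine Theorems~\ref{thm:main} and~\ref{thm:sep} to reduce the claim to showing that the pair $(\Ysep,\Deltasep)$ is $F$-regular, and then to apply Theorem~\ref{thm:schwede} with $C:=\lceil\Deltasep\rceil$ (possibly enlarged within its support to ensure $\Ysep\setminus C$ is affine). This splits the task into two parts: (a) exhibit a splitting of $\CO_{\Ysep}\to F_*^e\CO_{\Ysep}(\lceil(p^e-1)\Deltasep\rceil+C)$ for some $e>0$, and (b) verify that $(\Ysep\setminus C,0)$ is $F$-regular.

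For (a), I would invoke Grothendieck duality (Remark~\ref{rem:duality}) to identify the required splitting with a suitable section of the reflexive sheaf $\omega_{\Ysep}^{1-p^e}(-\lceil(p^e-1)\Deltasep\rceil-C)$, and carry out the key computation in the formal toric completion $(\hat V_y,\hat B_y)$ at each closed point $y\in\Ysep$. There $\omega_{\hat V_y}\cong\CO(-\hat B_y)$ and $\lceil\Deltasep\rceil$ matches the full toric boundary $\hat B_y$, so the sheaf in question restricts locally to $\CO((p^e-2)\hat B_y-\lceil(p^e-1)\Deltasep\rceil)$. The coefficients of $\Deltasep$ have the form $(\mu(P)-1)/\mu(P)<1$, and the hypothesis that $H$ has no $p$-torsion forces $\gcd(\mu(P),p)=1$; choosing $e$ so that $\mu(P)\mid p^e-1$ for every $P$ in the support of $\Deltasep$, the coefficient along each boundary component simplifies to $(p^e-1)/\mu(P)-1$, which is nonnegative for $e$ sufficiently large, so the constant section $\chi^0=1$ yields a splitting in the formal completion. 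Since $\Ysep$ is affine, splittings of maps between coherent sheaves are detected on stalks (via vanishing of the sheafified $\mathrm{Ext}^1$), and stalk-wise splittings descend from formal splittings by faithful flatness of completion; this assembles the formal-local splittings into a global splitting.

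For (b), $\Ysep\setminus C$ carries no toroidal boundary, so its formal completion at every closed point is that of a torus, and hence $\Ysep\setminus C$ is smooth; by the choice of $C$ it is also affine. A smooth affine variety is $F$-regular by the same stalk-wise detection principle combined with the fact that regular local rings are $F$-regular. The main obstacle will be the local-to-global passage in step (a), since the formal toric models vary from point to point and a global section of the anti-canonical twist is not directly constructible; the coherent-sheaf argument on the affine $\Ysep$ circumvents this by reducing the question to a stalk-wise condition, which is satisfied by the formal-toric computation.
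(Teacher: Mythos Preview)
Your overall strategy coincides with the paper's: reduce via Theorems~\ref{thm:main} and~\ref{thm:sep} to showing that $(\Ysep,\Deltasep)$ is $F$-regular, and then exploit the toroidal hypothesis together with Schwede's criterion (Theorem~\ref{thm:schwede}). Two points deserve comment.

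First, a minor one: the claim that ``$H$ has no $p$-torsion forces $\gcd(\mu(P),p)=1$'' is false. For instance $H=\GG_m$ has no $p$-torsion in its character group, yet $\mu_p\subset\GG_m$ can occur as a stabilizer, giving $\mu(P)=p$. Fortunately you do not need this: the coefficient $(p^e-2)-\lceil(p^e-1)a_P\rceil$ is nonnegative for $e\gg 0$ simply because $a_P<1$, with no divisibility required. This is exactly how the paper argues in Lemma~\ref{lemma:logsmooth}.

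The more substantive issue is the order of operations. You apply Schwede's criterion \emph{globally} on $\Ysep$ with $C=\lceil\Deltasep\rceil$, and then for step~(b) need $\Ysep\setminus C$ to be affine. Your proposed fix of ``enlarging $C$ within its support'' is vacuous, since $\Ysep\setminus C$ depends only on $\supp C$; and enlarging $\supp C$ would break step~(a). Whether $\Ysep\setminus\supp\lceil\Deltasep\rceil$ is affine is not immediate from the hypotheses --- the toroidal condition only tells you this \emph{formally} at each point. The paper avoids the problem by reversing the two reductions: it first uses affineness of $\Ysep$ to reduce $F$-regularity of the pair to $F$-regularity of each formal completion (arguing as in \cite[Proposition~1.1.6]{brion:05a}), landing in the genuinely toric case; only then does it invoke Theorem~\ref{thm:schwede} with $C=B$ the toric boundary, so that the complement is the open torus, which is manifestly smooth and affine. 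Your stalk-wise detection principle for~(a) is correct and is essentially this same local-to-global step --- the point is that it should be applied to the entire $F$-regularity statement, not just to the single splitting.
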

\begin{proof}
Combine Theorem \ref{thm:main}, Theorem \ref{thm:sep}, and Lemma \ref{lemma:logsmooth} below.
\end{proof}

\begin{lemma}\label{lemma:logsmooth}
Let $Y$ be a normal affine variety, $\Delta$ an effective $\QQ$-divisor, and assume that the coefficients $\Delta$ are all less than $1$. If $(Y,\lceil \Delta\rceil )$ is toroidal, then the pair $(Y,\Delta)$ is $F$-regular. 
\end{lemma}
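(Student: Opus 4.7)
The plan is to reduce the problem to the case of affine toric varieties via the formal-local structure of toroidal pairs, and then dispatch the toric case with a direct application of Theorem~\ref{thm:schwede}. The key ingredients are (a) the fact that global $F$-regularity of an affine pair is a local-at-closed-points property and is preserved under passage to formal completions (by faithful flatness and standard descent for strong $F$-regularity), and (b) an explicit splitting constructed from the standard toric $F$-splitting.

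First, since $Y$ is affine and normal, global $F$-regularity of $(Y,\Delta)$ is equivalent to strong $F$-regularity of the local pair $(\CO_{Y,y},\Delta_y)$ at every closed $y\in Y$, and the latter is in turn equivalent to strong $F$-regularity of the formal pair $(\widehat\CO_{Y,y},\widehat\Delta_y)$. By the toroidal hypothesis applied to $(Y,\lceil\Delta\rceil)$, for each closed $y\in Y$ there is an isomorphism of formal pairs
\[
(\widehat\CO_{Y,y},\widehat{\lceil\Delta\rceil}_y)\;\cong\;(\widehat\CO_{V_y,v_y},\widehat{B_y}),
\]
with $(V_y,B_y)$ an affine toric pair. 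Because $\Delta\le\lceil\Delta\rceil$ has the same support and all coefficients strictly less than $1$, this isomorphism identifies $\widehat\Delta_y$ with the completion of an effective $\QQ$-divisor $\Delta_{V_y}$ on $V_y$ supported on $B_y$ and with coefficients in $[0,1)$. Thus it suffices to prove the lemma in the toric case.

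For the toric case, suppose $V=\spec k[\sigma^\vee\cap M]$ is affine toric, $B=\sum_\rho D_\rho$ is the toric boundary, and $\Delta_V=\sum_\rho a_\rho D_\rho$ with $a_\rho<1$ for every ray $\rho$. I would apply Theorem~\ref{thm:schwede} with $C:=B$: the complement $V\setminus B$ is the acting torus, hence smooth and affine (so $F$-regular), and $\Delta_V$ restricts to $0$ on it, so condition (2) holds. For condition (1), I would use the standard toric $F$-splitting: under the identification $\omega_V^{1-p^e}\cong\CO_V((p^e-1)B)$, the constant monomial $\chi^0$ corresponds via Remark~\ref{rem:duality} (with $\Delta=0$) and Example~\ref{ex:toricsplit} to the usual Frobenius splitting of $V$. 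This splitting extends to a splitting of $\CO_V\to F_*^e\CO_V(\lceil(p^e-1)\Delta_V\rceil+B)$ precisely when $\lceil(p^e-1)\Delta_V\rceil+B\le (p^e-1)B$ componentwise, that is when $\lceil(p^e-1)a_\rho\rceil+1\le p^e-1$ for every $\rho$. Since $a_\rho<1$, the gap $(p^e-1)(1-a_\rho)$ grows without bound, so this inequality holds for all sufficiently large $e$.

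The main obstacle I anticipate is the clean formulation of the descent of (strong/global) $F$-regularity for \emph{pairs} from the formal completion at each closed point back to the affine pair $(Y,\Delta)$; the analogous statements for the variety alone are standard, but one must check that the $\QQ$-divisor is carried along correctly through the reductions. The toric-case calculation itself is elementary once the reduction has been made, amounting to the observation that the hypothesis $a_\rho<1$ provides just enough room to absorb the extra $+B$ coming from Theorem~\ref{thm:schwede}.
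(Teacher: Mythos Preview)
Your proposal is correct and follows essentially the same route as the paper: reduce via the formal-local toroidal structure to the affine toric case (the paper cites \cite[Proposition 1.1.6]{brion:05a} for this reduction), then apply Theorem~\ref{thm:schwede} with $C=B$, using the canonical toric splitting and the inequality $\lceil(p^e-1)\Delta\rceil+B\le(p^e-1)B$ for large $e$. The obstacle you flag about descent of $F$-regularity for pairs through completions is exactly the step the paper handles by appeal to the cited argument.
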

\begin{proof}
Since $Y$ is affine, we can argue as in the proof of \cite[Proposition 1.1.6]{brion:05a} to show that $(Y,\Delta)$ is $F$-regular if the pair is $F$-regular in a formal neighborhood of each closed point. 
Hence, we are are reduced to showing the following: let $Y$ be toric, $B$ the toric boundary divisor, and $\Delta$ an effective $\QQ$-divisor with $\Delta <B$ and whose coefficients are all smaller than $1$. Then $(Y,\Delta)$ is $F$-regular.

Due to the assumption on the coefficients of $\Delta$, there exists some $e\in \NN$ such that 
$$\lceil(p^e-1)\Delta \rceil+B \leq (p^e-1)B.$$
Now, the canonical toric splitting $\chi^u\to \chi^{u/{p^e}}$ (with $\chi^{u/{p^e}}=0$ if $u/{p^e}\notin M$) splits $\CO_Y\to F_*^e \CO_Y( (p^e-1)B)$, hence also $\CO_Y\to F_*^e \CO_Y( \lceil (p^e-1)\Delta \rceil +B)$, and by Theorem \ref{thm:schwede} we conclude that $(Y,\Delta)$ is $F$-regular.
\end{proof}

\subsection{\texorpdfstring{$\GG_m$-Actions}{Gm-actions}}
The $F$-splitting and $F$-regularity of normal affine $\GG_m$-varieties $X$ with $\CO_X^{\GG_m}=k$ was studied in \cite{watanabe:91a}.
By a classical result of Demazure \cite{demazure:88a}, such $X$ may be described as 
$$X=\spec_Y \bigoplus_{n\in\ZZ_{\geq 0}} \CO(\lfloor nD \rfloor)$$
where 
$$D=\sum_i \frac{p_i}{q_i} P_i$$
 is a $\QQ$-Cartier divisor on a projective variety $Y$. Assuming that $p_i,q_i$ are relatively prime, the \emph{fractional part} of $D$ is 
$$D'=\sum_i \frac{q_i-1}{q_i} P_i.$$
\begin{thm}[{\cite[Theorem 3.3]{watanabe:91a}}]\label{thm:watanabe}
Let $X$ be as above. Then $X$ is $F$-split ($F$-regular) if and only if $(Y,D')$ is $F$-split ($F$-regular).
\end{thm}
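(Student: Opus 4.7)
The plan is to apply the Main Theorem (Theorem \ref{thm:main}) directly, reducing everything to the identification of Watanabe's Demazure data with the Altmann--Hausen data from Proposition \ref{prop:p-div} and $\S$\ref{ss:setup}.

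First, I would verify the hypotheses of Theorem \ref{thm:main}. The group $H=\GG_m$ is connected, so condition \eqref{eqn:cover-condition} holds automatically for $X^\circ$ by Sumihiro; moreover $\GG_m$ has no $p$-torsion, and $X$ is normal by assumption. Since $Y$ is projective, it is already separated, so the separation machinery of $\S$\ref{sec:sep} (Proposition \ref{prop:sep}, Theorem \ref{thm:sep}) is not needed.

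Second, I would identify $X^\circ$, the quotient, and $\D$. The almost-free locus $X^\circ$ is precisely the complement of the unique $\GG_m$-fixed vertex of $X$ corresponding to the irrelevant ideal $\bigoplus_{n>0}\CO_Y(\lfloor nD\rfloor)$, and the structure morphism $\pi:X^\circ\to Y$ is the geometric quotient by $\GG_m$. Comparing the Demazure presentation with the Altmann--Hausen presentation of Proposition \ref{prop:p-div}, we read off that the associated homomorphism $\D:\CH(\GG_m)=\ZZ\to\CaDiv_\QQ Y$ is given by $\D(n)=nD$. Thus, in the notation of \eqref{eqn:d}, we have $\alpha_{P_i}(n)=np_i/q_i$ for each $P_i$ in the support of $D$, and $\alpha_P=0$ for every other prime divisor of $Y$.

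Third, I would compute the boundary $\Delta$ of $\S$\ref{ss:setup}. Since $H=\GG_m$ is a torus, we may take $\phi=\mathrm{id}$, so $\bar\alpha_P$ is simply $\alpha_P$ modulo $1$. By Proposition \ref{prop:stab}, the stabilizer order at a generic point above $P$ is $\mu(P)=\mu(\bar\alpha_P)$, the order of $\bar\alpha_P$ in $\Hom(\ZZ,\QQ/\ZZ)$. Since $\gcd(p_i,q_i)=1$, this order is $q_i$ at $P=P_i$, and $\mu(P)=1$ for all other prime divisors. Therefore
$$
\Delta \;=\; \sum_P \frac{\mu(P)-1}{\mu(P)}\,P \;=\; \sum_i \frac{q_i-1}{q_i}\,P_i \;=\; D'.
$$

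Finally, Theorem \ref{thm:main} immediately gives that $X$ is $F$-split (respectively $F$-regular) if and only if $(Y,\Delta)=(Y,D')$ is. I do not anticipate a genuine obstacle: the only piece of bookkeeping that requires care is matching Demazure's $D$ to the Altmann--Hausen $\D$ and then invoking Proposition \ref{prop:stab} to read off the ramification weights, both of which are immediate from the definitions.
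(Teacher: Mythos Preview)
Your proposal is correct and follows exactly the paper's own argument: the paper simply states that Theorem~\ref{thm:watanabe} is a special case of Theorem~\ref{thm:main}, since for $H=\GG_m$ one has $X^\circ/H=Y$ and $\Delta=D'$. You have merely (and correctly) spelled out the verification that $\Delta=D'$ via Proposition~\ref{prop:stab}, which the paper leaves implicit.
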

Theorem \ref{thm:watanabe} is a special case of our Theorem \ref{thm:main}. Indeed, for $H=\GG_m$ and $X$ as above, $X^\circ/H=Y$  and our $\Delta$ is exactly the $D'$ from above.

\begin{rem}
Suppose now that in the above setting, $Y$ is a complete intersection in $\PP^n$, cut out by hypersurfaces $Y_i$. Assume furthermore that the fractional part of $D$ is of the form
$$
D'=\frac{1-a}{a}(V\cap Y)
$$
for some reduced hypersurface $V\subset \PP^n$. Hara \cite[Theorem 4.2]{hara:95a} shows that 
\[ \spec_Y \bigoplus_{n\in\ZZ_{\geq 0}} \CO\left(\lfloor nD\rfloor\right)\]
is $F$-split if and only if 
\[ \spec_{\PP^n} \bigoplus_{n\in\ZZ_{\geq 0}} \CO\left(\lfloor nE\rfloor\right)\]
is $F$-split for some (or equivalently, for all) ample divisor(s) $E$ on $\PP^n$ with fractional part
$$
E'=\frac{1-a}{a}V+\frac{p-1}{p}\sum Y_i.
$$
Reinterpreted using our notation here, this shows that
$(Y,D')$ is $F$-split if and only if $(\PP^n,E')$ is $F$-split.
\end{rem}
\subsection{Complexity-One Actions}
Let $X$ be a $T$-variety of complexity one, that is, $X$ is a normal variety with an effective action by an algebraic torus $T$ satisfying 
$\dim X=\dim T+1$. Using notation as in \S \ref{ss:setup}, we have that $Y$ is a potentially non-separated smooth curve. Then there is a unique smooth quasiprojective curve $C$ which is a separation of $Y$. Let $\psi:X^\circ \to C$ be the composition of the quotient map $\pi$ with the separation $Y\to C$. For any point $c\in C$, let $\mu(c)$ be the maximal order of the stabilizer of a general point of $\psi^{-1}(c)$.

We can completely characterize $F$-split and $F$-regular complexity one $T$-varieties in terms of the curve $C$ and the stabilizers of the fibers of $\psi$:
\begin{thm}\label{thm:compone}
The complexity-one $T$-variety $X$ is $F$-split in exactly the following cases:
\begin{enumerate}
\item $C$ is affine.\label{case:uno}
\item $C$ is an ordinary elliptic curve, and $T$ acts freely on $X^\circ$.
\item $C=\PP^1$, $\mu(c)=1$ for all but at most three points $c_1,c_2,c_3$, and $\mu(c_1),\mu(c_2),\mu(c_3)$ is one of the triples in Table \ref{table:triples}.\label{case:tres}
\item $C=\PP^1$, $\mu(c)=1$ for all but four points $c_1,c_2,c_3,c_4$ which have  $\mu(c_i)=2$, $p\geq 3$, and the pair $(\PP^1,\sum \frac{1}{2}c_i)$ is ordinary (see Definition \ref{defn:ordinary}).\label{case:quattro}
\end{enumerate} 
Furthermore, $X$ is $F$-regular exactly in case \ref{case:uno}, or case \ref{case:tres} as described in Table \ref{table:triples}.
\end{thm}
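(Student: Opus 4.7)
The plan is to apply Theorem \ref{thm:main} and Theorem \ref{thm:sep} to reduce the problem to a classification of $F$-split and $F$-regular pairs of the form $(C, \Deltasep)$, where $C$ is the separated smooth curve and $\Deltasep = \smax \Delta = \sum_c \frac{\mu(c)-1}{\mu(c)}\cdot c$. I would then case-split on whether $C$ is affine or projective.

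If $C$ is affine, the pair $(C, \lceil \Deltasep \rceil)$ is automatically toroidal --- each closed point has a formal neighborhood isomorphic to $(\A^1,\{0\})$ or $(\A^1, \emptyset)$ --- and all coefficients of $\Deltasep$ lie strictly below $1$, so Theorem \ref{thm:logsmooth} (via Lemma \ref{lemma:logsmooth}) gives $F$-regularity immediately, yielding Case (1). Otherwise $C$ is projective, and I would apply Grothendieck duality (Remark \ref{rem:duality}) to identify $F$-splittings of $(C, \Deltasep)$ with sections of
$$\mcL_e = \omega_C^{1-p^e}\bigl(-\lceil (p^e-1)\Deltasep\rceil\bigr),$$
where $F$-regularity requires such sections to persist after twisting by $-D$ for arbitrarily large effective $D$. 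A degree computation rules out $g(C) \geq 2$ entirely. For $g(C) = 1$ it forces $\Deltasep = 0$ (i.e., $T$ acts freely on $X^\circ$), and then $F$-splitting is equivalent to the elliptic curve $C$ being ordinary (Case (2)); $F$-regularity is impossible here because $\deg(-K_C) = 0$ leaves no room for the required twist by $D$.

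The remaining main case is $C = \PP^1$. Writing $\Deltasep = \sum_{i=1}^n \frac{\mu_i - 1}{\mu_i} c_i$ with $\mu_i \geq 2$, the condition $\deg \mcL_e \geq 0$ as $e \to \infty$ forces $\sum_i (1 - 1/\mu_i) \leq 2$, which leaves $n \leq 3$ or $n=4$ with all $\mu_i = 2$; the latter gives Case (4) and reduces verbatim to the ordinariness condition of Definition \ref{defn:ordinary}. For $n \leq 3$ I would place the marked points at $0, 1, \infty$ and translate the splitting condition into the non-vanishing of the coefficient of $x^{p-1}$ in an explicit polynomial section (in the spirit of Examples \ref{ex:elliptic} and \ref{ex:trielliptic}), and then run through the admissible triples $(\mu_1,\mu_2,\mu_3)$ and residues of $p$ modulo $\mathrm{lcm}(\mu_i)$ to compile Table \ref{table:triples}. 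For $F$-regularity, Theorem \ref{thm:schwede} requires that a splitting persist under twisting by an arbitrary effective divisor $D$; this is possible only when $\deg(-K_{\PP^1} - \Deltasep) = \sum 1/\mu_i - 1 > 0$, which excludes both Case (4) and the Calabi-Yau boundary triples $(2,3,6), (2,4,4), (3,3,3)$, leaving precisely the strictly log-Fano triples.

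The main obstacle I expect is this last explicit computation: for each log-Fano triple with $\sum 1/\mu_i > 1$ one must identify the residue classes of $p$ modulo $\mathrm{lcm}(\mu_i)$ for which the relevant polynomial coefficient is non-zero (giving $F$-splitting) and further check which of these splittings persist under an arbitrary twist (giving $F$-regularity). The rest of the argument is essentially a bookkeeping application of the general machinery of \S \ref{sec:tfrob} and \S \ref{sec:sep} together with the log-smoothness Lemma \ref{lemma:logsmooth}.
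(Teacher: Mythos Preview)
Your proposal is correct and follows essentially the same route as the paper: reduce via Theorems \ref{thm:main} and \ref{thm:sep} to the pair $(C,\Deltasep)$, dispatch the affine case by Theorem \ref{thm:logsmooth}, and in the projective case use the degree of $\omega_C^{1-p^e}(-\lceil(p^e-1)\Deltasep\rceil)$ to bound $g(C)$ and then the multiplicities.

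One point where the paper is slightly more efficient than what you anticipate: the ``main obstacle'' you flag---checking coefficient non-vanishing triple by triple and residue class by residue class---does not actually arise. Once the degree inequality $\sum_i \lceil (p-1)\alpha_i\rceil \leq 2(p-1)$ holds (which is exactly the necessary condition coming from $\deg\mcL_1\geq 0$), the paper writes down a single uniform section $(\chi^1-1)^{\lceil(p-1)\alpha_3\rceil}\cdot\chi^{-\beta}$ with a suitable shift $\beta$ whose $\chi^0$-coefficient is a binomial coefficient and hence nonzero; so for the three-point case the degree bound is simultaneously necessary and sufficient, and no further congruence analysis is needed beyond reading off when that bound holds. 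Similarly for $F$-regularity the paper iterates this splitting to $\sigma^e$ and uses that $\sum_i\lceil(p^e-1)\alpha_i\rceil < 2(p^e-1)$ for large $e$ precisely when $\sum 1/\mu_i>1$, then invokes Theorem \ref{thm:schwede}. This is the same idea you sketch, just packaged so that no tabulation is required.
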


\begin{table}
\begin{tabular}{ll@{\qquad}l}
\toprule
triple & $F$-split & $F$-regular\\
\midrule
$(1,*,*)$&$p\geq 2$&$p\geq 2$\\
$(2,2,*)$&$p\geq 3$&$p\geq 3$\\
$(2,3,3)$&$p\geq 5$&$p\geq 5$\\
$(2,3,4)$&$p\geq 5$&$p\geq 5$\\
$(2,3,5)$&$p\geq 7$&$p\geq 7$\\
$(2,3,6)$&$p\equiv 1 \pmod 3$&No\\
$(2,4,4)$&$p\equiv 1 \pmod 4$&No\\
$(3,3,3)$&$p \equiv 1 \pmod 3$&No\\
\bottomrule
\\
\end{tabular}\caption{Stabilizer orders for $F$-split and $F$-regular complexity-one $T$-varieties}\label{table:triples}
\end{table}

\begin{rem}
In the case that $\dim X=2$, $X$ is affine, and $Y$ is projective, we recover \cite[Theorem 4.2]{watanabe:91a}.
\end{rem}

\begin{proof}[Proof of Theorem \ref{thm:compone}]
Consider the separation $(C,\Deltasep)$ of $(Y,\Delta)$. Then by Theorem \ref{thm:sep}, $X$ is $F$-split or $F$-regular if and only if $(C,\Deltasep)$ has the same property.
Suppose first that $C$ is affine. The separation $(C,\Deltasep)$ of $(Y,\Delta)$ is toroidal, so $X$ is $F$-split and $F$-regular by Theorem \ref{thm:logsmooth}. 

For the other cases, we may appeal to \cite[Theorem 4.2]{watanabe:91a} coupled with our Theorem \ref{thm:main}.  However, since the proof of loc.~cit.~is  rather terse, we include a proof here for completeness.
We now consider the case of $C$ projective.
Using the duality of Remark \ref{rem:duality}, we see that a necessary condition for $(C,\Deltasep)$ to be $F$-split (or $F$-regular) is that 
\begin{align}
\label{eqn:deg1}\deg \lceil (p-1)\Deltasep\rceil\leq (p-1)(2g-2)&\qquad\textrm{if $F$-split}\\
\label{eqn:deg2}\deg \lceil (p^e-1)\Deltasep\rceil< (p^e-1)(2g-2)&\qquad\textrm{for some $e$ if $F$-regular}.
\end{align}
Here, $g$ is the genus of the curve $C$. 

Since $\Deltasep$ is effective, we immediate conclude that
$g\leq 1$ if $X$ is $F$-split, with $\Deltasep=0$ in the case $g=1$. In the case $g=1$, we thus conclude that $X$ is $F$-split if and only if $\Deltasep=0$ and $C$ is $F$-split. The condition on $\Deltasep$ is equivalent to saying that $T$ acts freely on $X$, and an elliptic curve is $F$-split if and only if it is ordinary, see e.g. \cite[1.3.9]{brion:05a}. By the above degree requirement, we also see that if $X$ is $F$-regular, then we must have $C=\PP^1$. 

We now analyze the case $C=\PP^1$. 
Let $S$ be the finite subset of $\PP^1$ containing those points $y$ with $\mu(y)\neq 1$.
Note that we have
$$\Deltasep=\sum_{y\in S} \frac{\mu(y)-1}{\mu(y)}y.$$
Assuming that $X$ is $F$-split, the above degree bound leads to
$$
\sum_{y\in S} \frac{1}{\mu(y)} \geq \#S-2,
$$
with strict inequality if $X$ is $F$-regular.
A straightforward calculation shows that the only possible multiplicities $\mu(y)$ which can occur for $X$ $F$-split are the triples listed in Table \ref{table:triples} or $(2,2,2,2)$. Equation \eqref{eqn:deg1} shows that the stated conditions on $p$ are also necessary. Likewise, by \eqref{eqn:deg2} the only multiplicities $\mu(y)$ which can occur for $X$ $F$-regular are the triples listed in Table \ref{table:triples}.
Note that the case $(2,2,2,2)$ is covered in Example \ref{ex:elliptic}, and the case $(3,3,3)$ is covered in Example \ref{ex:trielliptic}.

It remains to show that for each triple, the condition on $p$ is also sufficient for $F$-splitting (or $F$-regularity). Fix the anticanonical divisor $-K_{\PP^1}=\{0\}+\{\infty\}$ as in Example \ref{ex:toricsplit}.
A section 
$$\sum_{i=1-p}^{p-1} a_i \chi^i\in H^0(\PP^1,\CO((1-p)K_{\PP^1}))$$
has a multiple which splits $\CO_{\PP^1}\to F_*\CO_{\PP^1}$ if and only if $a_0\neq 0$.
Now, we may assume that the points $c_1,c_2,c_3$ are respectively $0$, $\infty$, and $1$. 
Let $(\mu(c_1),\mu(c_2),\mu(c_3))$ be one of the triples from Table \ref{table:triples} with $p$ satisfying the requisite bound. Set $$\alpha_i=\frac{\mu(c_i)-1}{\mu(c_i)}.$$ Then $\Deltasep=\sum \alpha_i c_i$, and $\sum \lceil (p-1)\alpha_i \rceil\leq 2(p-1)$.
Hence, there exists $\beta\in \NN$ such that
$$
(\chi^1-1)^{\lceil(p-1)\alpha_3\rceil}\cdot \chi^{-\beta}
$$
is a section of $H^0(\PP^1,F_*\CO(\lceil (1-p)(K_{\PP^1}+\Deltasep)\rceil)$ and the coefficient $a_0$ of $\chi^0$ is non-zero. We conclude that a multiple of this section corresponds to a splitting $\sigma$ of $(\PP^1,\Deltasep)$, so $X$ is $F$-split.

If we are in the situation where we are claiming that $X$ is $F$-regular, then there exists $e\in \NN$ such that $\sum \lceil (p^e-1)\alpha_i \rceil < 2(p^e-1)$. Composing the splitting $\sigma$ from above with itself, we get that $\sigma^e$ splits $\CO_{\PP^1}\to F^e_*\CO_{\PP^1}(\lceil (p^e-1)\Deltasep\rceil)$; Let $\tau$ be the corresponding section of  $H^0(\PP^1,F_*^e  \CO_{\PP^1}(\lceil (1-p^e)(K_{\PP^1}+\Deltasep)\rceil))$. 
 By choice of $e$, we have
$$\lceil (1-p^e)(K_{\PP^1}+\Deltasep)\rceil>0,$$
so there exists an effective divisor $D>0$ with $\tau$ a section of $F_*^e  \CO_{\PP^1}(\lceil (1-p^e)(K_{\PP^1}+\Deltasep)\rceil-D)$.
Hence, $\sigma^e$ splits $\CO_{\PP^1}\to F_*^e\CO_{\PP^1}(\lceil (p^e-1)\Deltasep\rceil-D)$, so by Theorem \ref{thm:schwede}, $(\PP^1,\Deltasep)$ and $X$ are $F$-regular.
\end{proof}

\begin{rem}\label{rem:genus}
We can define the genus of a pair $(C,\Deltasep)$ by 
  \[g(C,\Deltasep)=\frac{\deg\Deltasep + 2g(C)}{2}.\]
By the above theorem, $F$-split implies that $g(C,\Deltasep) \leq 1$, and $F$-regular that $g(C,\Deltasep) < 1$. 
\end{rem}

\begin{ex}
By \cite[Proposition 6.3]{smith:00a}, any smooth Fano variety in characteristic zero is $F$-regular after reducing to characteristic $p$ for $p$ sufficiently large. We illustrate this with the list of complexity-one smooth Fano threefolds from \cite{suess:14a}. For the threefolds 2.24, 3.8, and 3.10, the stabilizer orders are given by the triple $(2,2,2)$.\footnote{Note that only special elements in these three deformation families admit a two-torus action.} Hence, these threefolds are $F$-split and $F$-regular exactly in characteristics $p$ with $p\geq 3$. All other threefolds on the list have stabilizer orders given by the triple $(1,*,*)$ and are $F$-split and $F$-regular in arbitrary characteristic.
\end{ex}

\subsection{Surjectively Graded Algebras}\label{sec:surj}
Let $A$ be an $F$-finite noetherian $\ZZ^n$-graded normal integral domain of characteristic $p$. Then $A$ is \emph{surjectively graded} \cite{hashimoto:03a} if for all $u,u'\in\ZZ^n$ with $A_u,A_{u'}\neq 0$, the multiplication map $A_u\otimes_{A_0}A_{u'}\to A_{u+u'}$ is surjective.
Then Hashimoto shows the following:
\begin{thm}[{cf. \cite[Theorem 5.1]{hashimoto:03a}}]
Assume that $\bigoplus_{n\in \ZZ} A_{nu}$ is $F$-regular for some $u$ in the interior of the weight cone of $A$ with $A_u\neq 0$. Then $A$ is $F$-regular as well. 
\end{thm}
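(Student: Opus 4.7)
The plan is to apply Theorem~\ref{thm:main} to an auxiliary torus action on $X = \spec A$. After replacing $u$ by a primitive multiple if necessary (passing to a Veronese over-ring, which transfers $F$-regularity under tame conditions), we may assume $u$ is primitive in $\ZZ^n$. Let $T = \spec k[\ZZ^n]$, and let $T' := \ker(u: T \to \GG_m) \subseteq T$ be the subtorus whose character group is $M' = \ZZ^n/\ZZ u$. Then $A^{T'} = B$, and the $T'$-variety $X$ satisfies condition~\eqref{eqn:cover-condition} because it is affine. Let $X^\circ$ denote the $T'$-almost-free locus, $\pi: X^\circ \to Y$ the geometric quotient, and $\iota: Y \to \spec B$ the natural morphism induced by $B \hookrightarrow A$.

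The first step, which carries essentially all the content of the argument, is to establish:
\begin{enumerate}[(i)]
  \item $T'$ acts \emph{freely} on $X^\circ$; by Proposition~\ref{prop:stab}, the boundary divisor $\Delta$ on $Y$ from Theorem~\ref{thm:p-div} is therefore trivial.
  \item $\iota$ is an open immersion with complement of codimension at least two in $\spec B$.
\end{enumerate}
Both (i) and (ii) should follow from the same combinatorial observation: for any prescribed point $x \in X^\circ$ (respectively, any prescribed $T'$-invariant affine chart of $X^\circ$), the set of characters $\chi \in M'$ admitting a semi-invariant $f \in A$ not vanishing at $x$ is a sub-semigroup of $M'$ by surjective grading, since a product of two non-vanishing semi-invariants is again non-vanishing. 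The interiority of $u$ in the weight cone of $A$ forces this semigroup to generate $M'$ as a group, because the weight monoid of $A$ projects onto a cone in $M'_{\RR}$ with non-empty interior. Any non-trivial stabilizer would force the semigroup to lie in a proper subgroup, a contradiction, yielding (i); an analogous argument at the generic point of a would-be codimension-one component of $\spec B \setminus Y$ yields (ii).

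Given (i) and (ii), the conclusion is immediate. Since $B$ is $F$-regular by hypothesis, so is its big open subvariety $Y \subseteq \spec B$ (cf.~\cite[Remark 3.3]{schwede:10a}). Together with $\Delta = 0$, this shows that the pair $(Y, \Delta)$ is $F$-regular. Applying Theorem~\ref{thm:main} to the $T'$-action on $X$ then gives $F$-regularity of $X = \spec A$, as desired.

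The main obstacle is the combinatorial step establishing (i) and (ii): translating surjective grading plus interiority of $u$ into freeness of the $T'$-action on $X^\circ$ and bigness of $\iota: Y \hookrightarrow \spec B$. This is the sole place where the surjective grading hypothesis is used in an essential way; the rest of the argument is a direct application of the machinery developed earlier in the paper.
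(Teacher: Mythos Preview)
The paper does not actually prove this theorem; it is cited as Hashimoto's result. The paragraph following the theorem statement explains how surjectively graded algebras fit into the paper's framework, but via a \emph{different} construction: it uses the full torus $T=\GG_m^n$ acting on $X=\spec A$, observes that surjective grading forces $\D(v)$ to be integral for all $v$ (hence $\Delta=0$ for the full-torus quotient), and concludes that $A$ is $F$-regular if and only if $X^\circ/T$ is. This is a related statement, but it is not a proof of Hashimoto's theorem, and the paper does not connect $X^\circ/T$ to the subring $B=\bigoplus_n A_{nu}$.

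Your approach via the subtorus $T'=\ker(u)$ is a natural attempt to bridge that gap, but claim~(ii) is false as stated. Take $A=k[x,y]$ with $\deg x=e_1$, $\deg y=e_2$; this is surjectively graded, $u=(1,1)$ lies in the interior of the weight cone, and $B=k[xy]$. The antidiagonal $T'$ acts by $t\cdot(a,b)=(ta,t^{-1}b)$, the almost-free locus is $\A^2\setminus\{0\}$, and the geometric quotient $Y$ is the affine line with the origin doubled. The map $\iota:Y\to\spec B\cong\A^1$ collapses the two origins and is therefore not an open immersion. So the step ``$B$ $F$-regular $\Rightarrow$ $Y$ $F$-regular via bigness of $\iota$'' breaks down.

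The natural repair is to invoke the separation machinery of \S\ref{sec:sep}: one would want to show that $\spec B$ (or a big open subset of it) is a separation $\Ysep$ of $Y$, and that $\Deltasep=0$. In the example above this works, but establishing it in general requires an argument you have not supplied, and it is not obvious that surjective grading alone forces $\spec B$ to be a separation of $Y$. Your justification of~(i) is also only heuristic: surjective grading controls the weight \emph{monoid} of $A$, not the monoid of weights with a semi-invariant nonvanishing at a given point, and the passage from ``generates $M'_\QQ$'' to ``generates $M'$'' (needed to rule out finite stabilizers) is not addressed.
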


Surjectively graded algebras fit nicely into our framework as well. Let $A$ be a surjectively graded finitely generated normal $k$-algebra, $X=\spec A$. 
Constructing $X^\circ$, $Y$, and $\Delta$ as in \S\ref{ss:setup}, we have that $\Delta=0$. Indeed, if $A$ is surjectively graded, then the sheaf
\[\mcA\cong \bigoplus_{v\in M} \CO_Y(\lfloor \D(v)\rfloor) \]
of $\CO_Y$-algebras is also (locally) surjectively graded. 
But it is straightforward to check that this implies that $\D(v)$ is integral for all $v\in M$, and hence $\Delta=0$.
We may thus conclude by our Theorem \ref{thm:main} that $X=\spec A$ is $F$-regular if any only if $X^\circ/T$ is $F$-regular. 

\subsection{Cox Rings and Related Constructions}\label{sec:cox}
Let $Y$ be a normal variety with finitely generated class group $\Cl(Y)$. The \emph{Cox sheaf} of $Y$ is the $\Cl(Y)$-graded sheaf
\[\R(Y)=\bigoplus_{[D]\in\Cl(Y)}\CO_Y(D).\]
This definition appears to depend on choice of representatives $D$ of classes $[D]\in\Cl(Y)$, but any two choices lead to isomorphic sheaves. Furthermore, choosing representatives for a generating set of $\Cl(Y)$ leads to an $\CO_Y$-algebra structure on $\CO_Y$, and any two choices lead to isomorphic $\CO_Y$-algebras \cite[\S 1.4]{coxbook}. 

The \emph{Cox ring} of $Y$ is the ring $R(Y)=H^0(Y,\R(Y))$. It is a natural generalization of the homogeneous coordinate ring of projective space. Note that in general it need not be finitely generated. However, it is always integral and normal \cite[\S 1.5]{coxbook}.
\begin{prop}[{\cite[cf. Proposition 4.6]{gongyo:12a}}]\label{prop:cox}
Suppose that $R(Y)$ is finitely generated, and assume that $\Cl(Y)$ has no $p$-torsion. Then 
\[X=\spec R(Y)\]
is $F$-split ($F$-regular) if and only if $Y$ is $F$-split ($F$-regular).
\end{prop}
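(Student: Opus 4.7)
The plan is to realize $X = \spec R(Y)$ as an $H$-variety in the framework of \S\ref{ss:setup}, with quotient pair $(Y, 0)$, and then apply Theorem \ref{thm:main} directly.

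First, I would set $H = \spec k[\Cl(Y)]$, a diagonalizable group scheme with character group $M = \Cl(Y)$. Because $\Cl(Y)$ has no $p$-torsion by hypothesis, the same is true of $H$ (the torsion part of $\Cl(Y)$ has order coprime to $p$, so it Cartier-dualizes to an \'etale group scheme with no $\mu_p$ subscheme), which is what is required in order to use Theorem \ref{thm:main}. The $\Cl(Y)$-grading of $R(Y)$ induces an $H$-action on $X$, and $X$ is a normal integral $k$-variety because the Cox ring is a normal integral domain \cite[\S 1.5]{coxbook}. Condition \eqref{eqn:cover-condition} is automatic since $X$ is affine.

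Next I would match the Cox construction with the setup of \S\ref{ss:setup}. By the standard theory of characteristic spaces \cite[\S 1.4, \S 1.6]{coxbook}, there is a big open subset $U \subseteq Y$ over which $\spec_U \R(Y)|_U$ is identified with a big open subset of $X^\circ$, in such a way that the structural map is the geometric quotient map $\pi : X^\circ \to U$. Since $X$ and $Y$ are normal and $F$-splitting and $F$-regularity depend only on a big open subset, we may replace $X^\circ$ by this open and $Y$ by $U$ throughout.

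It then remains to show that the boundary divisor $\Delta$ of \S\ref{ss:setup} is zero. Locally on $U$, the Cox sheaf decomposes as $\R(Y)|_U = \bigoplus_{u \in \Cl(Y)} \CO_U(D_u)\cdot\chi^u$ where the $D_u$ are \emph{integral} divisors representing the classes $u$. Comparing with the representation \eqref{eqn:pdiv} provided by Theorem \ref{thm:p-div}, we conclude that the homomorphism $\D$ takes integral values, i.e.\ $\bar\D = 0$ in the notation of \eqref{eqn:d}. Proposition \ref{prop:stab} then gives $\mu(P) = \mu(\bar\alpha_P) = 1$ for every prime divisor $P \subset Y$, so $\Delta = \sum_P \tfrac{\mu(P)-1}{\mu(P)} P = 0$. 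Applying Theorem \ref{thm:main} to our setup finishes the proof, since $(Y, \Delta) = (Y, 0)$.

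The main obstacle is the middle step: precisely matching the Cox-theoretic construction with the framework of \S\ref{ss:setup}, including identifying the almost-free locus $X^\circ$ with (a big open in) the characteristic space $\spec_Y \R(Y)$ and checking that the induced quotient recovers (a big open subset of) $Y$. Everything else is a routine consequence of Theorem \ref{thm:main} once these identifications, together with the vanishing of $\Delta$, have been established.
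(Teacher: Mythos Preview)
Your proposal is correct and follows essentially the same route as the paper: set $H=\spec k[\Cl(Y)]$, identify the characteristic space $\spec_{\CO_Y}\R(Y)$ with a big $H$-invariant open of $X$ having quotient $Y$ and trivial boundary $\Delta$, and apply Theorem~\ref{thm:main}. The paper's proof is terser---it simply asserts that $H$ acts on $U=\spec_{\CO_Y}\R(Y)$ with finite stabilizers, $U/H=Y$, and $\Delta=0$ ``by construction''---whereas you spell out the vanishing of $\Delta$ via the integrality of the divisors $D_u$ and Proposition~\ref{prop:stab}, which is a welcome elaboration rather than a different argument.
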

\begin{proof}
Let $H=\spec k[\Cl(Y)]$. 
In this situation, $U=\spec_{\CO_Y} \R(Y)$ is an $H$-invariant open subset of $X$ of codimension at least one \cite[\S 1.6]{coxbook}, hence $X$ is $F$-split ($F$-regular) exactly when $U$ is. But by construction, $H$ acts on $U$ with finite stabilizers, $U/H=Y$, and the boundary divisor $\Delta\subset Y$ is trivial. The claim now follows from our Theorem \ref{thm:main}. 
\end{proof}

A related situation occurs when considering a normal variety $Z$ embedded in some other variety $Y$ as above with $R(Y)$ finitely generated.
For simplicity, we shall assume that $Y$ is toric, in which case $R(Y)$ is a polynomial ring; generalizations are left to the reader.
Consider $Z\subset Y$ a normal variety. Let $U\subset \spec R(Y)$ be as in the proof of Proposition \ref{prop:cox}, and let $I\subset R(Y)$ be the $\Cl(Y)$-homogeneous ideal of $\overline{\pi^{-1}(Y)}\subset \sec R(Y)$, where $\pi:U\to Y$ is the quotient map.
\begin{prop}
Let $Z$, $Y$, and $I$ be as above.
\begin{enumerate}
\item If $V(I)$ is $F$-split ($F$-regular), then so is $Z$.
\item Suppose that $Y$ is smooth, $V(I)$ normal, and any component of $V(I)\setminus U$ of dimension $\dim V(I)-1$ has infinite $H$-stabilizer. Then $Z$ being $F$-split ($F$-regular) implies that $V(I)$ is as well.
\end{enumerate}
\end{prop}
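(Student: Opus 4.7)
My plan is to apply Theorem~\ref{thm:main} (and, for claim (2), Theorem~\ref{thm:sep}) to $V(I)$ viewed as an $H$-subvariety of $\spec R(Y)$. The key observation is that when $Y$ is toric, the $H$-action on $U$ has trivial stabilizers at the generic points of all the divisors that will arise, so the boundary divisors in the relevant quotients will either be trivial or, when nonzero, will not obstruct the argument. As a preliminary, note that $V(I)\cap U=\pi^{-1}(Z)$ because $\pi^{-1}(Z)$ is already closed in $U$, and this open subscheme of $V(I)$ is normal since $\pi:U\to Y$ is flat with smooth fibers (our $H$ is smooth as $\Cl(Y)$ has no $p$-torsion), and such morphisms pull back normality.

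For claim (1), I restrict an $F$-splitting (resp.\ the splittings witnessing $F$-regularity via Theorem~\ref{thm:schwede}) of $V(I)$ to the open subset $V(I)\cap U$, which preserves the respective property. Applying Theorem~\ref{thm:main} to the normal affine $H$-variety $V(I)\cap U$ then shows that the quotient pair $(Z,\Delta)$ is $F$-split (resp.\ $F$-regular), and since $\Delta\geq 0$ this implies that $Z$ itself is $F$-split (resp.\ $F$-regular).

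For claim (2), the smoothness of $Y$ ensures that $H$ acts freely on $U$, so any $H$-quotient of an $H$-invariant subvariety of $U$ has trivial boundary divisor. Letting $V(I)^\circ$ be the almost-free locus of $V(I)$, the hypothesis that divisorial components of $V(I)\setminus U$ have infinite stabilizer forces them to lie in $V(I)\setminus V(I)^\circ$, so $V(I)^\circ\setminus(V(I)\cap U)$ has codimension $\geq 2$ in $V(I)^\circ$. Thus $V(I)^\circ/H$ is a normal prevariety containing $Z$ as a big open subset with trivial boundary divisor; since $Z$ is separated, the non-separated locus of $V(I)^\circ/H$ has codimension $\geq 2$, so Proposition~\ref{prop:sep} yields a separation $\Ysep$ that still contains $Z$ as a big open with $\Deltasep=0$. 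Given that $Z$ is $F$-split (resp.\ $F$-regular), I extend the splitting from $Z$ to $\Ysep$ and conclude via Theorems~\ref{thm:sep} and~\ref{thm:main} that $V(I)$ has the same property.

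The main obstacle is the extension of $F$-splittings and of $F$-regularity-witnessing splittings from the big open $Z$ to the normal variety $\Ysep$. For $F$-splittings this follows because $\SHom(F_*\CO_{\Ysep},\CO_{\Ysep})$ is a reflexive sheaf on the normal variety $\Ysep$, so its global sections agree with those over $Z$, and the splitting condition $\sigma(1)=1$ is preserved under the extension. For $F$-regularity, any effective divisor $D>0$ on $\Ysep$ has no components in the codimension-$\geq 2$ complement of $Z$, so the $F$-regularity of $Z$ furnishes some $e$ and a splitting of $\CO_Z\to F^e_*\CO_Z(D|_Z)$, which extends uniquely to $\Ysep$ by the same reflexive-sheaf argument applied to $\SHom(F^e_*\CO_{\Ysep}(D),\CO_{\Ysep})$, after which Theorem~\ref{thm:schwede} concludes.
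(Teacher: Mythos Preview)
Your overall strategy matches the paper's for claim (1), but there is a gap in the detail you add: the assertion that $\pi:U\to Y$ is flat is false when $Y$ is a non-smooth toric variety. Indeed $\pi_*\CO_U=\R(Y)=\bigoplus_{[D]}\CO_Y(D)$, and for $D$ Weil but not Cartier the summand $\CO_Y(D)$ is reflexive of rank one but not locally free, hence not flat. Since claim (1) does not assume $Y$ smooth, your deduction that $V(I)\cap U$ is normal does not go through. (The paper's own one-line proof is no more careful on this point.) The fix is to bypass normality: the invariant-part argument underlying Lemma~\ref{lemma:invariant} works for any $M$-graded ring, so an $F$-splitting of $V(I)\cap U$ restricts to one of its degree-zero part $\CO_Z$ regardless; for $F$-regularity, $V(I)$ is then automatically normal and Theorem~\ref{thm:main} applies directly. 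A minor side remark: $V(I)\cap U$ need not be affine, only quasi-affine, though this does not affect the argument.

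For claim (2) your argument is correct but takes a longer route than the paper's. You pass to the quotient $V(I)^\circ/H$, argue that $Z$ is a big open subset with separated complement, build a separation $\Ysep$, and then extend splittings from $Z$ to $\Ysep$ via reflexivity before invoking Theorems~\ref{thm:sep} and~\ref{thm:main}. The paper instead stays on the $H$-variety side: it observes that the hypothesis on infinite stabilizers makes $V(I)\cap U$ a big open of $V(I)^\circ$, so (by normality of $V(I)$) the two have the same $F$-splitting and $F$-regularity properties; and then $V(I)^\circ$ and $V(I)$ agree by the machinery already established in \S\ref{sec:tfrob} (Lemmas~\ref{lemma:extends} and~\ref{lemma:fregboundary}). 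Combined with $Z\leftrightarrow V(I)\cap U$ via Theorem~\ref{thm:main}, this finishes. Your detour through separation works, but the paper's approach avoids it entirely by never descending to the (possibly non-separated) quotient $V(I)^\circ/H$.
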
 
\begin{proof}
The first claim is a straightforward application of our Theorem \ref{thm:main}. For the second claim, note that $Y$ smooth implies that $H$ acts freely on $U$. Then $Z$ is $F$-split ($F$-regular) if and only if $V(I)\cap U$ is by loc.~cit. Under the further assumptions, $V(I)$ is $F$-split ($F$-regular) if and only if $V(I)\cap U$ is.
\end{proof}

\begin{ex}[Elliptic curves in $\PP^1\times \PP^1$]
Any form $f\in k[x_0,x_1,y_0,y_1]$ of bidegree $(2,2)$ defines a (possibly singular) elliptic curve $E$ embedded in $\PP^1\times \PP^1$. Assume that $V(f)\subset \A^4$ is normal. Then the corresponding curve $E$ is $F$-split if and only if the coefficient of $(x_0x_1y_0y_1)^{p-1}$ in $f^{p-1}$ is non-zero. Indeed, by the above proposition, $E$ is $F$-split if and only if $k[x_0,x_1,y_0,y_1]/(f)$ is $F$-split, and Fedder's criterion \cite{fedder:83a} implies that the latter is $F$-split if and only if $f^{p-1}\notin\langle x_0^p,x_1^p,y_0^p,y_1^p\rangle$.

 \end{ex}

\section{Toric vector bundles}\label{sec:vb}
Toric vector bundles and their projectivizations provide a natural class of normal varieties with action by a lower-dimensional torus. We apply our general results here to discuss the $F$-splitting and $F$-regularity of certain toric vector bundles. Note that in relation to positivity properties of toric vector bundles, it was asked in \cite{hering:10a} exactly which toric vector bundles are $F$-split. Our Theorems \ref{thm:main} and \ref{thm:vb-quotient} give a complete answer in the special case of \emph{two-step} bundles defined below.

Given a vector bundle $\E$, we denote by $\PP(\E)$ the corresponding projective bundle, whose fibers are the spaces of lines in the fibers of $\E$.\footnote{By some authors, this bundle is denoted by $\PP(\E^*)$.}
To begin with, we have the following well known result.
\begin{prop}[{cf. \cite[\S 1.1]{brion:05a}}]
A vector bundle $\E$ is $F$-split ($F$-regular) if and only if its projectivization $\PP(\E)$ is $F$-split ($F$-regular).  
\end{prop}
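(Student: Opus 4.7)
The plan is to apply Theorem \ref{thm:main} directly, realizing $\PP(\E)$ as a $\GG_m$-quotient of an open subvariety of the total space of $\E$. Let $V(\E) := \spec_X \mathrm{Sym}(\E^\vee)$ denote the total space of $\E$; it is a normal variety because $X$ is normal and $V(\E)\to X$ is Zariski-locally a product with $\A^r$. The group $H = \GG_m$ acts on $V(\E)$ by scalar multiplication in the fibers. This action is effective, and since $H=\GG_m$ is connected, condition \eqref{eqn:cover-condition} is automatic by Sumihiro's theorem. The character group $M=\ZZ$ has no $p$-torsion, so Theorem \ref{thm:main} applies.

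The next step is to identify the pair $(Y,\Delta)$ attached to this action. The stabilizer of a point of $V(\E)$ is trivial off the zero section $0_X\subset V(\E)$ and equals all of $\GG_m$ on $0_X$, so the almost-free locus is $V(\E)^\circ = V(\E)\setminus 0_X$, and $\GG_m$ acts freely on it. Working locally on a trivialising affine open of $X$, one checks that the geometric quotient $V(\E)^\circ/\GG_m$ agrees with $\PP(\E)$, and these local identifications glue. Since the action on $V(\E)^\circ$ is free, every stabilizer multiplicity $\mu(P)$ equals $1$, and hence the boundary $\QQ$-divisor is $\Delta = 0$.

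Theorem \ref{thm:main} therefore gives that $V(\E)$ is $F$-split (resp. $F$-regular) if and only if $(\PP(\E),0) = \PP(\E)$ is $F$-split (resp. $F$-regular), which is the claim. The only subtlety worth flagging is that the complement $B=V(\E)\setminus V(\E)^\circ = 0_X$ can be a divisor when $\mathrm{rank}(\E)=1$; but Theorem \ref{thm:main} makes no codimension hypothesis on $B$, since $B$ only enters the finer statement of Remark \ref{rem:bijection} through the polytope $P_{V(\E)}$, not into the equivalence of $F$-splitting/$F$-regularity itself. So there is no real obstacle, and the proposition follows at once from Theorem \ref{thm:main}.
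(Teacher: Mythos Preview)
Your proof is correct and follows exactly the same route as the paper's: apply Theorem~\ref{thm:main} to the scalar $\GG_m$-action on the total space of $\E$, noting that the action is free away from the zero section so that the quotient pair is $(\PP(\E),0)$. Your write-up is simply more detailed (checking normality, condition~\eqref{eqn:cover-condition}, absence of $p$-torsion, and the rank-one edge case), but the argument is identical.
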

\begin{proof}
  There is a natural $\GG_m$-action on $\E$ given by the diagonal action on each fiber. The fixed point set is given by the zero-section and $\GG_m$ acts with trivial stabilizers elsewhere. Now, $(\PP(\E),0)$ is the corresponding quotient pair and we obtain the result by applying Theorem \ref{thm:main}.
\end{proof}

Now, let $X$ be a toric variety corresponding to a fan $\Sigma$ with embedded torus $T$, see \cite{fulton:93a} for details. Throughout this section, $M$ will be the character lattice of $T$, and $\Sigma^{(1)}$ the set of rays of $\Sigma$. A \emph{toric vector bundle} on $X$ is a vector bundle $\E$ on $X$ equipped with a $T$-equivariant structure. This equivariant structure turns both $\E$ and $\PP(\E)$ into $T$-varieties.

To a toric vector bundle $\E$ of rank $r$, Klyachko \cite{klyachko:89a} associated a $k$-vector space $E$ of dimension $r$ and a full decreasing filtration $E^\rho(\lambda)$ of $E$ for every ray $\rho \in \Sigma^{(1)}$
\[
\cdots \supset E^{\rho}(\lambda-1) \supset E^{\rho}(\lambda) \supset E^{\rho}(\lambda+1) \supset \cdots.
\]
fulfilling the following compatibility condition:
For each maximal cone $\sigma \in \Sigma$, there are lattice points $u_1, \ldots, u_r \in M$ and a decomposition into one-dimensional subspaces $E = L_1 \oplus \cdots \oplus L_r$ such that 
\[
E^{\rho}(\lambda) = \bigoplus_{\rho(u_i) \, \geq \, \lambda} L_i,
\]
 for each $\rho \preceq \sigma$ and all $\lambda \in \ZZ$. Here $\rho(u_i)$ denotes the value of a primitive generator of $\rho$ on $u_i$.
From this data one can reconstruct $\E$ as follows. The sections of $\E$ on the chart $U_\sigma$ of $X$ corresponding to $\sigma$ are given as a submodule of $k[\sigma\dual \cap M] \otimes E$ via
\[
H^0(U_\sigma,\E)_u = \bigcap_{\rho \preceq \sigma} E^{\rho}(\rho(u)).
\]

Note, that the description of toric vector bundles by filtration behaves well with standard constructions as tensor product and dualization. Indeed, the dual bundle corresponds to the filtrations $E^{*\rho}(\lambda) = E^\rho(-\lambda)^\perp$ of the dual vector space $E^*$.

\begin{defn}
We say that a toric vector bundle $\E$ is a \emph{two-step bundle} if every filtration $E^\rho(\lambda)$ has most two steps where the dimension jumps (i.e.~at most one proper subset of $E$ occurs).
\end{defn}
Clearly, any rank two toric vector bundle is a two-step bundle, since $E$ is two-dimensional in this case. 

\begin{ex}
 By \cite{klyachko:89a} the tangent and cotangent bundles are examples of two-step bundles, since their filtrations have the following form:

 \begin{equation*}                             
\mathcal{T}^{\rho}(\lambda) = \left \{ \begin{array}{ll} N \otimes k & \text{ for } \lambda < 0 \\
								  \langle \rho \rangle   & \text{ for } \lambda = 0, \\
								  0 & \text{ for } \lambda > 0,
				  \end{array} \right. 
\end{equation*}
 \begin{equation*}                             
\Omega^{\rho}(\lambda) = \left \{ \begin{array}{ll} M \otimes k & \text{ for } \lambda < 0 \\
								  \rho^\perp  & \text{ for } \lambda = 0, \\
								  0 & \text{ for } \lambda > 0.
				  \end{array} \right.
\end{equation*}
\end{ex}

For a given two-step bundle $\E$, let $E_1,\ldots,E_\ell$ be the proper subspaces of $E$ occurring in the filtrations $E^\rho(\lambda)$. For every $E_i$ and every ray $\rho \in \Sigma^{(1)}$ we define 
\[\mu_i(\rho) = \max \{\lambda \mid  E_i \subset E^\rho(\lambda)\} -\min \{ \lambda \mid E^\rho(\lambda) \neq E\}.\]
Now we set 
\[\mu_i = \max\{\mu_i(\rho) \mid \rho \in \Sigma^{(1)}\}.\]

Consider $Y = \Bl_{F_1,\ldots,F_\ell}\PP(E)$, the successive blowup of $\PP(E)$ in the strict transforms of the subspaces $F_i$. Let the corresponding (strict transforms) of the exceptional divisors be denoted by $D_i$; we define the exceptional divisor of the blowup in a hyperplane to be the hyperplane itself. Note that $Y$ and this configuration of divisors is independent of the ordering of the $E_i$ on a big open subset.

\begin{thm}
\label{thm:vb-quotient}
Let $(Y,\Delta)$ be the quotient pair for $\PP(\E)$, where $\E$ is a two-step toric vector bundle. A separation of $(Y,\Delta)$ is given by
\[\Ysep = \Bl_{E_1,\ldots,E_\ell} \PP(E),\qquad  \Deltasep= \sum_i \frac{\mu_i-1}{\mu_i} D_i. \]
\end{thm}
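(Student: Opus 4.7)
The plan is to work locally on the toric variety $X$ using Klyachko's classification of equivariant bundles. Over each maximal cone $\sigma$ of $\Sigma$, $\E$ splits as a sum $L_1 \oplus \cdots \oplus L_r$ of $T$-eigenlines with weights $u_1,\ldots,u_r \in M$, and this gives an explicit local description of $\PP(\E)|_{U_\sigma}$. Using this, I will identify the almost-free locus $\PP(\E)^\circ$, its quotient $Y = \PP(\E)^\circ/T$, and the stabilizer orders needed to describe $\Delta$ via Proposition~\ref{prop:stab}, and then read off a separation $(\Ysep, \Deltasep)$.

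The starting point is the observation that on the open torus $T\subset X$ the bundle $\E|_T$ is equivariantly trivial, so $\PP(\E)|_T \cong T \times \PP(E)$ with $T$ acting on itself by translation. Quotienting by $T$ gives $\PP(E)$ with trivial stabilizers everywhere, so on the big open subset $\PP(E) \setminus (E_1 \cup \cdots \cup E_\ell)$ the quotient $Y$ is separated, agrees with this open subset of $\PP(E)$, and has trivial boundary divisor. It remains to describe $Y$ over each proper subspace $E_i$. For a ray $\rho$ with $E_i$ appearing in the filtration $E^\rho(\lambda)$, I would choose a compatible local splitting and compute the $T$-action on the preimage of $E_i \subset \PP(E)$ directly in the chart $U_\sigma$; the generic stabilizer along this preimage is a cyclic group whose order can be read off from the weights $u_j$ across the relevant jump in the filtration and matches $\mu_i(\rho)$. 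The two-step hypothesis is used precisely here: at most one proper subspace is fixed by the one-parameter subgroup $\lambda_\rho$ in the fiber direction, so no higher-codimension strata appear and the only new exceptional loci in the quotient lie above the $E_i$.

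The main obstacle is understanding how rays contributing different $\mu_i(\rho)$'s for the same $E_i$ produce the non-separatedness of $Y$, and why the natural separation is the iterated blowup of $\PP(E)$ at the $E_i$. Locally along the strict transform of $E_i$, different rays with $E_i$ appearing in their filtrations give rise to distinct exceptional divisors above $E_i$ in the non-separated quotient, glued along their complements — the toric phenomenon of a prevariety with a doubled subvariety already seen in Example~\ref{ex:main}. Passing to a separation in the sense of Proposition~\ref{prop:sep} picks out a single exceptional divisor, and the blowup geometry furnishes a canonical choice; independence of the blowup order away from a codimension-two set, together with the convention that the exceptional divisor of a blowup along a hyperplane is the hyperplane itself, ensures that $\Bl_{E_1,\ldots,E_\ell}\PP(E)$ is well defined. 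Finally, the coefficient of $D_i$ in $\Deltasep$ is $(\mu_i-1)/\mu_i$ by Proposition~\ref{prop:stab}, with $\mu_i = \max_\rho \mu_i(\rho)$ emerging from the $\smax$-operation that computes $\Deltasep = \smax \Delta$ as the maximum of the coefficients of the various components of $Y$ lying over $D_i$.
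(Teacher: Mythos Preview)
Your proposal takes a genuinely different route from the paper. The paper's proof is a two-line citation: it simply invokes \cite[Proposition~3.5 and \S6.2]{gonzalez:12a} and \cite[Theorem~5.9]{hausen:10a}, which already compute the Chow/GIT quotient of the projectivization of a (two-step) toric vector bundle and identify it with the iterated blowup. You instead sketch a self-contained local computation via Klyachko's splittings. What the citation buys is brevity and a clean invocation of machinery that handles the global structure of the separation at once; what your approach buys is transparency about \emph{where} the blowup and the numbers $\mu_i$ come from, and independence from those references.

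That said, there is a real gap in your outline at the crucial step. You correctly identify that over the open torus the quotient is $\PP(E)$ with trivial $\Delta$, and that the stabilizer orders along the loci over $E_i$ are governed by the $\mu_i(\rho)$. But the passage from ``different rays produce doubled exceptional loci over $E_i$'' to ``the separation is the iterated blowup $\Bl_{E_1,\ldots,E_\ell}\PP(E)$'' is asserted rather than argued. Concretely: over a chart $U_\sigma$ the projectivization $\PP(\E)|_{U_\sigma}$ is itself a toric variety, and you need to actually compute its quotient by $T$ and see that it is an open subset of the blowup of $\PP(E)$ along those $E_i$ appearing in the filtrations for rays of $\sigma$ --- not merely $\PP(E)$ with a doubled $E_i$. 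This is where the two-step hypothesis does its real work (only one linear subspace per ray, so the local toric quotient is a single weighted blowup rather than something more complicated), and it is precisely the content of the passages in \cite{gonzalez:12a} that the paper cites. Once that local identification is made, your argument for $\Deltasep$ via $\smax$ and Proposition~\ref{prop:stab} is correct.
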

\begin{proof}
The claim follows directly from the arguments of \cite[Proposition 3.5 and \S 6.2]{gonzalez:12a} and \cite[Theorem 5.9]{hausen:10a}.
\end{proof}

For two-step toric vector bundles $\E$, we can thus apply Theorem \ref{thm:sep} together with Theorem \ref{thm:main} to determine when $\PP(\E)$ and $\E$ are $F$-split or $F$-regular.
In the following, we consider several special cases.

Note that the following Corollary was obtained by \cite{xin:14a} for the case of $F$-splitting using arguments different than ours:
\begin{cor}\label{cor:cotangent}
  The cotangent bundle of a smooth toric variety $X$ is always $F$-regular. In particular, it is $F$-split.
\end{cor}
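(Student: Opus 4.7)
The plan is to apply the machinery from Theorem~\ref{thm:vb-quotient} combined with Theorems~\ref{thm:main} and \ref{thm:sep}. The cotangent bundle $\Omega_X$ is a two-step toric vector bundle---its filtrations, displayed in the preceding example, have a single proper subspace $\rho^\perp \subset E := M\otimes k$ at each ray $\rho$---and $\Omega_X$ is $F$-regular if and only if $\PP(\Omega_X)$ is, by the proposition at the start of this section. So it suffices to prove $F$-regularity of $\PP(\Omega_X)$.

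First I would use Theorem~\ref{thm:vb-quotient} to compute a separation $(\Ysep, \Deltasep)$ of the quotient pair. Setting $n := \dim X$, the proper subspaces appearing in the filtrations are precisely the distinct hyperplanes $\rho^\perp \subset E$ as $\rho$ ranges over $\Sigma^{(1)}$. Since each projectivization $\PP(\rho^\perp) \subset \PP(E) \cong \PP^{n-1}$ is already a Cartier divisor, the successive blowups in Theorem~\ref{thm:vb-quotient} are trivial morphisms, so $\Ysep = \PP^{n-1}$. Next I would show $\Deltasep = 0$: the bundle $\Omega_X$ is equivariantly trivial over the open orbit of $X$, so $\PP(\Omega_X)$ restricts there to $T\times \PP^{n-1}$, on which $T$ acts freely via the first factor. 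Consequently, the $T$-stabilizer at a generic point of the preimage of each prime divisor $D_i \subset \Ysep$ is trivial, and Proposition~\ref{prop:stab} gives $\mu(D_i) = 1$, so the coefficient $(\mu(D_i)-1)/\mu(D_i)$ vanishes for every $i$.

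By Theorems~\ref{thm:main} and \ref{thm:sep}, $F$-regularity of $\PP(\Omega_X)$ is then equivalent to that of $(\Ysep,\Deltasep) = (\PP^{n-1}, 0)$, which holds because normal toric varieties are $F$-regular (as recalled in the introduction). The main obstacle is the bookkeeping needed to verify both that the blowups in Theorem~\ref{thm:vb-quotient} are trivial (using that the $E_i$ are hyperplanes) and that the multiplicities $\mu_i$ all equal $1$ (using the equivariant triviality of $\Omega_X$ over the open orbit); once this is in place, the conclusion is immediate from the framework of the paper.
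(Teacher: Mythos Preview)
Your overall strategy matches the paper: identify the separation of the quotient pair as $(\PP^{n-1},0)$ via Theorem~\ref{thm:vb-quotient}, then invoke Theorems~\ref{thm:main} and~\ref{thm:sep} together with the $F$-regularity of $\PP^{n-1}$. The paper's proof is exactly this one line.

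However, your justification of $\Deltasep=0$ has a genuine gap. You argue that $\Omega_X$ is equivariantly trivial over the open orbit $T\subset X$, so that $T$ acts freely on $\PP(\Omega_X)|_T\cong T\times\PP^{n-1}$, and deduce that the relevant stabilizers are trivial. But \emph{every} toric vector bundle is equivariantly trivial over the open orbit (this is built into Klyachko's description), so your argument, if valid, would force $\Deltasep=0$ for any two-step bundle---in contradiction with the example of $F^*\Omega_X$ immediately following the corollary, where $\Deltasep=\sum_\rho\frac{p-1}{p}\PP(\rho^\perp)$. The issue is that the non-separated quotient $Y$ is strictly larger than $\PP(\Omega_X)|_T/T\cong\PP^{n-1}$: it also contains charts arising from $\PP(\Omega_X)$ restricted to the torus-invariant divisors $D_\rho\subset X$, and $\Deltasep=\smax\Delta$ records the \emph{maximum} of $\mu(P)$ over all prime divisors $P\subset Y$ lying over a given $D_i\subset\Ysep$, not just the one coming from the open orbit. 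Your invocation of Proposition~\ref{prop:stab} only addresses the latter.

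The direct route---and the one the paper intends---is to read off $\mu_i$ from its combinatorial definition just before Theorem~\ref{thm:vb-quotient}: for $\Omega_X$ the filtration $E^\rho$ passes from $E$ to $\rho^\perp$ to $0$ in consecutive steps, so each $\mu_i$ equals $1$ and $\Deltasep=0$ follows.
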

\begin{proof}
  In this case, we have $(\Ysep,\Deltasep)=(\PP^{n-1},0)$, where $n=\dim X$.
\end{proof}

On the other hand, the Frobenius pullback of the cotangent bundle is not even $F$-split:
\begin{ex}
  We consider the vector bundle $\mathcal{E}=F^*\Omega_X$ on a smooth complete toric variety $X=X_\Sigma$ of dimension $n$. This bundle is given by the filtrations 
  \[
  E^{\rho}(\lambda) = 
  \begin{cases}
    M\otimes k     & \lambda < 0 \\
    \rho^\perp & 0 \leq \lambda \leq p\\
    0     & \lambda > p    
  \end{cases}
  \]
In particular it is a two-step bundle and we see that a separation of the corresponding quotient pair is given by $(\Ysep, \Deltasep)$, where $\Ysep=\PP(M\otimes k)$ and 
$$\Deltasep=\sum_{\rho\in\Sigma^{(1)}} \frac{p-1}{p}\PP(\rho^\perp).$$
 We obtain 
  \[\deg \lfloor (1-p)(K_{\Ysep} + \Deltasep)\rfloor = (p-1)(n - (\#\Sigma^{(1)})).\]
Since $\#\Sigma^{(1)}>n$ for $X$ complete, the right hand side is negative, and we conclude using Remark \ref{rem:duality} that $\E$ cannot be $F$-split.
\end{ex}

It is known that the cotangent bundle for flag varieties is also $F$-split, see \cite{kumar:99a}. We ask:
\begin{question}
Let $X$ be any smooth $F$-split (or $F$-regular) variety. Is $\Omega_X$ always $F$-split (or $F$-regular)?
\end{question}

The tangent bundle on a smooth toric variety is not always $F$-split (see Example \ref{ex:tbundle} below), but it is in the case of projective space.
\begin{cor}
  The tangent bundle of $\PP^{n}$ is always $F$-regular. In particular, it is $F$-split.
\end{cor}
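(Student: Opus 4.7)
The approach is to apply Theorem~\ref{thm:vb-quotient} to $\mathcal{T}_{\PP^n}$, which is a two-step toric vector bundle by the filtration formulas recalled just before the definition of two-step bundle. The proper subspaces appearing in its Klyachko filtrations are the $n+1$ lines $E_i = \langle\rho_i\rangle \subset E = N\otimes k$ spanned by the primitive generators of the rays of the fan of $\PP^n$. Since the $\rho_i$ satisfy the unique relation $\sum_{i=0}^n \rho_i = 0$ and any $n$ of them form a $\ZZ$-basis of $N$, the corresponding points $p_i = \PP(E_i) \in \PP(E) = \PP^{n-1}$ lie in general linear position.

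By Theorem~\ref{thm:vb-quotient}, a separation of the quotient pair for $\PP(\mathcal{T}_{\PP^n})$ is therefore
\[
(\Ysep,\Deltasep) \;=\; (\Bl_{p_0,\ldots,p_n}\PP^{n-1},\,0),
\]
with trivial boundary because every filtration of $\mathcal{T}_{\PP^n}$ has a single nontrivial jump, so all of the $\mu_i$ equal $1$. Combining the projectivization equivalence at the start of this section with Theorems~\ref{thm:main} and~\ref{thm:sep}, the bundle $\mathcal{T}_{\PP^n}$ is $F$-regular if and only if the variety $\Ysep$ is $F$-regular.

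The main obstacle is thus showing that the blowup of $\PP^{n-1}$ in $n+1$ points in general position is globally $F$-regular in every characteristic. A concrete strategy is to apply Theorem~\ref{thm:schwede}: let $C$ be the sum of the exceptional divisors together with the strict transforms of a suitable collection of coordinate hyperplanes chosen so that $\Ysep\setminus C$ is an affine toric variety (hence $F$-regular), and construct a splitting of $\CO_{\Ysep}\to F_*^e\CO_{\Ysep}(C)$ for some $e\gg 0$ by exhibiting, via Grothendieck duality (Remark~\ref{rem:duality}), a global section of $\omega_{\Ysep}^{1-p^e}(-C)$ whose ``constant term'' does not vanish. The technical subtlety is the explicit construction of this section and the verification that it truly defines a splitting; for $n=2$ this reduces to the case already treated in Example~\ref{ex:bl1}, while for general $n$ one can exploit the $S_{n+1}$-symmetry of the point configuration. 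Alternatively, one bypasses the difficulty entirely by identifying $\PP(\mathcal{T}_{\PP^n})$ with the partial flag variety $SL_{n+1}/P$ and invoking classical $F$-regularity of generalized flag varieties; but the route via Theorem~\ref{thm:vb-quotient} keeps the proof inside the framework of the paper.
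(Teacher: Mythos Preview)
Your reduction is exactly the one the paper uses: apply Theorem~\ref{thm:vb-quotient} to identify $(\Ysep,\Deltasep)$ with $(\Bl_{n+1\text{ pts}}\PP^{n-1},0)$, then invoke Theorems~\ref{thm:main} and~\ref{thm:sep}. The reference to Example~\ref{ex:bl1} for $n=2$ is also apt, since $\PP(\mathcal{T}_{\PP^2})$ is precisely the flag variety $W$ discussed there.

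However, the proposal is not a complete proof. You correctly isolate the remaining obstacle---the $F$-regularity of $\Bl_{n+1\text{ pts}}\PP^{n-1}$---but then only outline a strategy (``let $C$ be \ldots\ and construct a splitting \ldots'') and explicitly flag ``the explicit construction of this section'' as an unresolved technical subtlety. This is the entire content of the corollary once the reduction is done, so leaving it open is a genuine gap. The paper closes it immediately afterward with a dedicated lemma: it proves the stronger statement that $\Bl_{n+2\text{ pts}}\PP^n$ is $F$-regular by writing down the explicit section
\[
\tau = (1-\chi^{e_n})\prod_{i=1}^{n-1}(1-\chi^{-e_i}) \in H^0(X,\CO(-K_X)),
\]
on the toric blowup $X$ in the fixed points, checking that $\tau^{p-1}$ has constant term $1$, observing that $\tau$ vanishes to sufficiently high order at the remaining point so the splitting lifts to $\widetilde X$, and finally noting that $\tau$ is already a section of $\CO(-K_X-E)$ for a suitable exceptional divisor $E$ so that Theorem~\ref{thm:schwede} applies with $C=E$. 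Your sketch gestures at the right shape of argument but does not supply any of this.

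Your proposed alternative via $\PP(\mathcal{T}_{\PP^n}) \cong SL_{n+1}/P$ would also work and is shorter, but, as you note, it imports an outside result rather than staying within the paper's framework.
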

\begin{proof}
 In this case, we have $\Ysep$ is the blowup of $\PP^{n-1}$ in $n+1$ general points, and $\Deltasep=0$. The claim now follows from the above discussion and the following lemma.
\end{proof} 
\begin{lemma}
The blowup of $\PP^{n}$ in $n+2$ general points is $F$-regular.
\end{lemma}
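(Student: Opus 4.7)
The plan is to realize $X := \Bl_{n+2\text{ pts}}\PP^n$ as the separated torus quotient of the projectivized tangent bundle $\PP(T_{\PP^{n+1}})$, and then invoke the classical $F$-regularity of the resulting partial flag variety.

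The tangent bundle $T_{\PP^{n+1}}$ is a two-step toric vector bundle, with Klyachko filtration as displayed in the example of two-step bundles in the excerpt. Applying Theorem~\ref{thm:vb-quotient} to the natural $(n+1)$-torus action on $\PP(T_{\PP^{n+1}})$, the separated quotient pair is
\[
(\Ysep, \Deltasep) \;=\; \bigl( \Bl_{\langle\rho_1\rangle,\ldots,\langle\rho_{n+2}\rangle}\PP(N\otimes k),\; 0 \bigr),
\]
where $\rho_1,\ldots,\rho_{n+2}$ are the rays of the fan of $\PP^{n+1}$. Since any $n+1$ of the $\rho_i$ form a basis of $N_\QQ$, the induced $n+2$ points in $\PP^n \cong \PP(N\otimes k)$ are in general position; hence $\Ysep \cong X$.

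Next I identify $\PP(T_{\PP^{n+1}})$ with the two-step partial flag variety
\[
\mathrm{Fl}(1,2;\,n+2) \;=\; \{\, V_1\subset V_2 \subset k^{n+2} \,:\, \dim V_i = i\,\}.
\]
Indeed, by the Euler sequence $T_{[V_1]}\PP^{n+1}\cong k^{n+2}/V_1$, a point of $\PP(T_{\PP^{n+1}})$ is a line $V_1$ together with a line in $k^{n+2}/V_1$, which is the same datum as a plane $V_2 \supset V_1$. As a homogeneous space $G/P$ for $G=SL_{n+2}$, this flag variety is Fano and carries a $B$-canonical Frobenius splitting compatible with all Schubert subvarieties; this is well known to imply global $F$-regularity in every characteristic (see e.g.\ \cite{brion:05a} combined with Theorem~\ref{thm:schwede}).

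Finally, Theorem~\ref{thm:main} together with Theorem~\ref{thm:sep}, applied to the $(n+1)$-torus action on $\PP(T_{\PP^{n+1}})$, gives that $\PP(T_{\PP^{n+1}})$ is $F$-regular if and only if $(\Ysep,\Deltasep) = (X,0)$ is. Combining with the previous paragraph, $X$ is $F$-regular, as desired. The main step not internal to the paper is the classical $F$-regularity of $\mathrm{Fl}(1,2;n+2)$; once this is in hand, the paper's machinery (Theorems~\ref{thm:main}, \ref{thm:sep}, \ref{thm:vb-quotient}) immediately translates it into the statement of the lemma.
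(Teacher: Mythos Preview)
Your argument is correct and takes a genuinely different route from the paper's own proof. The paper argues directly: after placing the $n+2$ points at the $n+1$ torus-fixed points and the point $\mathbf{1}$, it writes down an explicit section $\tau=(1-\chi^{e_n})\prod_{i=1}^{n-1}(1-\chi^{-e_i})$ of $-K_X$ on the toric blowup $X$ at the first $n+1$ points, checks that $\tau^{p-1}$ gives a splitting with high enough vanishing at $\mathbf{1}$ to lift to the further blowup $\widetilde X$, observes that $\tau$ is in fact a section of $-K_X-E$ for one of the exceptional divisors $E$, and then invokes Theorem~\ref{thm:schwede} together with the fact that $\widetilde X\setminus E$ is open in a toric variety. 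Your approach instead runs the paper's own machinery (Theorems~\ref{thm:main}, \ref{thm:sep}, \ref{thm:vb-quotient}) backwards, identifying $\Bl_{n+2}\PP^n$ as the separated quotient of $\PP(T_{\PP^{n+1}})\cong \mathrm{Fl}(1,2;n+2)$ and importing the global $F$-regularity of partial flag varieties from the literature. This is elegant and shows that in the paper's logical structure the lemma and the subsequent corollary on $T_{\PP^n}$ are really equivalent; the price is the external input about $G/P$, whereas the paper's proof is entirely self-contained. One small remark: the deduction of $F$-regularity of $G/P$ from \cite{brion:05a} plus Theorem~\ref{thm:schwede} does go through (the $B$-canonical splitting is compatible with the Schubert divisors, whose complement is the big cell $\cong\A^N$), but you might also cite Lauritzen--Raben-Pedersen--Thomsen directly for clarity.
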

\begin{proof}
After applying a projective transformation, we can take the $n+2$ points to be $n+1$ toric fixed points of $\PP^n$, along with the point $1$.
The blowup $X$ of $\PP^n$ in the $n+1$ fixed points is toric, and choosing $-K_X$ to be the standard toric anticanonical divisor, a basis for $H^0(X,\CO(-K_X))$ is given by monomials $\chi^u$ for $u=(u_1,\ldots,u_n)\in\ZZ^n$ satisfying
\begin{align*}
&-1\leq u_i\leq n-1 &&i=1,\ldots,n;\\
&1-n\leq \sum u_i\leq 1. &&
\end{align*}
Consider the global section
$$
\tau=(1-\chi^{e_n})\prod_{i=1}^{n-1}(1-\chi^{-e_i})
$$
Here $e_i$ is the standard basis of $\ZZ^n$. The coefficient of $\chi^0$ in $\tau^{p-1}$ is $1$. Hence, under the Grothendieck duality used in Remark \ref{rem:duality}, $\tau^{p-1}$ corresponds to an $F$-splitting of $X$. Furthermore, this lifts to an $F$-splitting of the blowup $\widetilde{X}$ of $X$ in the point $1$, since $\tau$ vanishes to order $n$ at the point $1$, see \cite[Exercise 1.3.13]{brion:05a}.

But in fact, $\tau$ is a global section of $\CO(-K_X-E)$ for $E$ any one of the exceptional divisors of $X\to \PP^n$, excluding one. Hence, by  Grothendieck duality, we have a splitting of $\CO_{\widetilde{X}}\to F_* \CO_{\widetilde{X}}(E)$. But $\widetilde{X}\setminus E$ is an open subvariety of a toric variety, so $\widetilde{X}$ is $F$-regular by Theorem \ref{thm:schwede}.
\end{proof}

We can give the most precise answer as to when $\E$ is $F$-split or $F$-regular in the case of rank two toric vector bundles.
\begin{cor}\label{cor:rktwo}
Let $\E$ be a rank two toric vector bundle with associated vector space $E$ and proper lines $E_i$. Then $\E$ is $F$-split if and only if either there are at most three lines $E_i$ with values $\mu_i>1$, and the $\mu_i$ form a triple as in Table \ref{table:triples}; or $p\geq 3$, there are exactly four lines $E_1,E_2,E_3,E_4$ with $\mu_i>1$, for these four lines we have $\mu_i=2$, and the coefficient of $y^{(p-1)/2}$ in $(y-\lambda)^{(p-1)/2)}(y-1)^{(p-1)/2}$ is non-zero, where $\lambda$ is the cross-ratio of four colinear points $v_i\in E_i$, $i=1,2,3,4$.

Likewise, $\E$ is $F$-regular if and only if either there are at most three lines $E_i$ with values $\mu_i>1$, and the $\mu_i$ form a $F$-regular triple as in Table \ref{table:triples}.
\end{cor}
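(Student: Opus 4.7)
The plan is to reduce everything to Theorem~\ref{thm:compone} applied to $\PP(\E)$. First, observe that any rank two toric vector bundle is automatically a two-step bundle, because the vector space $E$ is two-dimensional and so any proper subspace appearing in a filtration is a line. Furthermore, by the first proposition of \S\ref{sec:vb}, $\E$ is $F$-split ($F$-regular) if and only if $\PP(\E)$ is, so it suffices to analyze $\PP(\E)$.

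Next, I would apply Theorem~\ref{thm:vb-quotient} to identify a separation $(\Ysep,\Deltasep)$ of the quotient pair of $\PP(\E)$. Since $\PP(E) = \PP^1$ and each $E_i$ is a line in $E$ (hence a hyperplane, so by the convention of the theorem we do not actually blow anything up), we have $\Ysep = \PP^1$ and the divisors $D_i$ are simply the points $\PP(E_i)\in \PP^1$. Thus
\[
	(\Ysep,\Deltasep) = \Bigl(\PP^1,\;\sum_i \frac{\mu_i-1}{\mu_i}\PP(E_i)\Bigr).
\]
Combining Theorems~\ref{thm:main} and \ref{thm:sep}, $\PP(\E)$ is $F$-split ($F$-regular) if and only if $(\PP^1,\Deltasep)$ is.

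Now $\PP(\E)$ is a complexity-one $T$-variety (its dimension is $\dim X + 1 = \dim T + 1$), and its separated quotient is $\PP^1$ with boundary $\Deltasep$, so I would invoke Theorem~\ref{thm:compone}. Of the four cases listed there, only cases~\ref{case:tres} and \ref{case:quattro} can occur because $\Ysep = \PP^1$ is neither affine nor elliptic. Case~\ref{case:tres} gives the first alternative in the corollary: at most three lines $E_i$ with $\mu_i>1$, whose multiplicities form one of the admissible triples in Table~\ref{table:triples}, and similarly case~\ref{case:tres} with the $F$-regular column yields the criterion for $F$-regularity. Case~\ref{case:quattro} gives the second alternative: exactly four lines with $\mu_i=2$, $p\geq 3$, and the pair $(\PP^1,\tfrac12\sum\PP(E_i))$ ordinary in the sense of Definition~\ref{defn:ordinary}.

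The only remaining step is to translate the ordinariness condition into the cross-ratio statement in the corollary. Since the four points $\PP(E_i)$ lie on $\PP(E)=\PP^1$, their cross-ratio $\lambda$ is, by standard projective geometry, equal to the cross-ratio of any four colinear representative vectors $v_i\in E_i$ in $E$; Definition~\ref{defn:ordinary} then gives the stated condition on the coefficient of $y^{(p-1)/2}$ in $(y-\lambda)^{(p-1)/2}(y-1)^{(p-1)/2}$. I expect this last identification to be essentially bookkeeping, so the main obstacle is simply matching up the conditions carefully --- all the hard work is already encapsulated in Theorems~\ref{thm:main}, \ref{thm:sep}, \ref{thm:compone}, and \ref{thm:vb-quotient}.
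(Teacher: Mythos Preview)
Your proposal is correct and follows exactly the paper's approach: the paper's proof is the single sentence ``This is a direct application of Theorem~\ref{thm:compone},'' and you have simply unpacked why that theorem applies, via Theorem~\ref{thm:vb-quotient} identifying the separated quotient pair as $(\PP^1,\sum_i \tfrac{\mu_i-1}{\mu_i}\PP(E_i))$ and then matching the cases.
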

\begin{proof}
This is a direct application of Theorem \ref{thm:compone}.
\end{proof}

N.~Lauritzen raised the question if there is an $F$-split vector bundle $\E$ such that the dual bundle $\E^*$ is not $F$-split \cite{openproblems}. Corollary~\ref{cor:rktwo} implies that for toric bundles of rank two this cannot happen, since in the cases where $\E$ is F-split the quotient pairs of $\E$ and $\E^*$ are isomorphic. In the following examples, we will see a number of two-step toric bundles of higher rank satisfying this property.
\begin{ex}\label{ex:lauritzen}
Let $X=X_\Sigma$ be a toric variety, $\Sigma^{(1)} = \{\rho_1, \ldots, \rho_\ell\}$. We consider the toric vector bundle $\E$ on $X$ given by
 \[
  E^{\rho_i}(\lambda) = 
  \begin{cases}
    E     & \lambda < 0 \\
    E_i & \lambda = 0\\
    0     & \lambda > 0    
  \end{cases}
  \]  
where the $E_i$ are  hyperplanes in $E$ in sufficiently general position, and $\dim E=n+1$.
If for example $X$ is regular, this collection of filtrations fulfills the necessary compatibility condition.

Now, a separation for the quotient pair of $\PP(\E)$ is given by $(\PP^n,0)$, and for $\PP(\E^*)$ by 
$(\Bl_{\ell} \PP^n, 0)$, where $\Bl_{\ell} \PP^n$ is the blowup of $\PP^n$ is $\ell$ general points.
Since $\PP^n$ is $F$-regular, $\PP(\E)$ is as well; in particular, it is $F$-split. On the other hand, on $\Ysep=\Bl_{\ell} \PP^n$ the sheaf $\CO((1-p)K_\Ysep)$ has no global section if $\ell \geq h^0(\PP^n,\CO(n+1))$, so in this case $\PP(\E^*)$ cannot be $F$-split.

We can modify this example to give a counterexample where $\PP(\E)$ is $F$-split but not $F$-regular, and $\PP(\E^*)$ is not $F$-split. Indeed, 
consider the bundle $\E$ as above, except that for $2(n+1)$ of the rays the filtrations $E^{\rho_i}(\lambda)$ have value $E_i$ for two steps in the filtration instead of just one. In this case, the boundary divisor $\Delta$ is $\sum_i \frac{1}{2}E_i$, the sum being over the indices for those $2(n+1)$ rays.
Since $\deg \Delta=n+1$, $(Y,\Delta)$ cannot be $F$-regular, but it will be $F$-split if the $E_i$ are sufficiently general. On the other hand, $\PP(\E^*)$ will still not be $F$-split.
\end{ex}

\begin{ex}\label{ex:tbundle}
In \cite[Example 4.2]{gonzalez:12a}, a smooth toric variety is constructed such that the quotient for the tangent bundle is given by $\Ysep=\Bl_{14}\PP^{n-1}$. In characteristic $p \neq 2,3$, $9$ of the $14$ points form the complete base locus of a pencil of cubics \cite{totaro:08a}. Hence, $\omega_\Ysep^{1-p}$ does not admit any global sections and $\Ysep$ and hence $\mathcal{T}_X$ is not $F$-split. On the other hand, $\Omega_X$ is always $F$-split by Corollary~\ref{cor:cotangent}.
\end{ex}

The situation for toric rank two bundles motivates the following modified version of Lauritzen's question.

\begin{question}
   Is there an $F$-split (non-toric) rank two vector bundle $\E$ such that the dual bundle $\E^*$ is not F-split? 
\end{question}

\section{Diagonal splittings}\label{sec:diag}
\begin{defn} 
A \emph{diagonal splitting} of a scheme $X$ is a splitting of $X\times X$  compatible with the diagonal \cite{ramanathan:87a}. 
By a \emph{diagonal splitting of a pair} $(X,\Delta)$ we mean a 
splitting of \[(X \times X,\;\Delta\!\times\!X + X\!\times\!\Delta )\] which is compatible with the diagonal. 
More generally, by a \emph{diagonal splitting of a triple} $(X;\Delta_+,\Delta_-)$  we mean a 
splitting of \[(X \times X,\;\Delta_+\!\times\!X + X\!\times\!\Delta_- )\] which is compatible with the diagonal. 
\end{defn}
\noindent Note that $X$ being diagonally split has strong consequences for the syzygies of $X$, see e.g. \cite[1.5]{brion:05a}

\begin{ex}
\label{ex:diag-splitting-curves}
If $C$ is a complete curve and $(C,\Delta)$ is diagonally split, then $g(C,\Delta)\leq\nicefrac{1}{2}$ has to hold (see Remark \ref{rem:genus} for a definition of $g(C,\Delta)$). Likewise, if $(C;\Delta_+, \Delta_-)$  is diagonally split, then we must have $g_+:=g(C,\Delta_+) \leq \nicefrac{1}{2}$ and $g_- := g(C,\Delta_-) \leq \nicefrac{1}{2}$.
\end{ex}
\begin{proof}
 The diagonal has bidegree $(1,1)$ in $C \times C$. Hence,
\[D:=-K_{C\times C}-(\Delta_+\!\times\!X + X\!\times\!\Delta_-)\]
 has bidegree  $(2-2g_+,\;2-2g_-)$. Hence, if $g_+$ or $g_-$ is larger than $\nicefrac{1}{2}$, then $D$ and all its positive multiples have empty linear systems. By Remark~\ref{rem:duality} this implies that there is no such splitting. For $(C,\Delta)$ we get the claim by considering $(C; \Delta,\Delta)$.
\end{proof}

Now, let $X$ be an $H$-variety as in \S \ref{ss:setup} and assume that $H$ has no $p$-torsion. The product $X\times X$ admits a natural $H\times H$-action. However, the diagonal is invariant only with respect to the diagonal subgroup $H \subset H\times H$. This embedding of groups corresponds to the surjection of character lattices 
\[M\times M \to M;\quad (u_1,u_2) \mapsto u_1+u_2.\]
Hence, semi-invariant functions of degree $(u,-u)$ with respect  to the $H\times H$-action are exactly the invariant functions with respect to the diagonal action. Now, by using Proposition~\ref{prop:comp2}, we see that we may assume that a diagonal splitting of $X$ is of the form
\begin{equation}\label{eqn:diag}
\sigma =  \sum_{w \in M} \sigma_{(w,-w)}
\end{equation}
where $\sigma_{(w,-w)} \in \Hom_{\CO_X}(F_*\CO_{X\times X},\CO_{X\times X})_{(w,-w)}$.

\begin{rem}
\label{rem:pushforward-to-quotient}
Note that by \S\ref{sec:tfrob}, given an element $\sigma_w \in \Hom_{\CO_X}(F_*\CO_X,\CO_X)_{w}$ we may interpret is as an element of $\Hom_{\CO_Y}(F_*k(Y),k(Y))$ which we as before we will denote by $\bar \sigma_w$ in the following. Remember that Lemma~\ref{lemma:piotr} ensures that \[\bar \sigma_w \in \Hom_{\CO_Y}(F_*\CO_Y(\lceil (p-1)\Delta + \D(w) \rceil),\CO_Y).\]

This extends to $X \times X$ with the full $H\!\times\!H$-action as follows. For \[\sigma_{(w,-w)}=\sum_{i} \sigma_i \otimes \sigma^i \in \Hom_{\CO_X}(F_*\CO_X,\CO_X)_{w} \otimes \Hom_{\CO_X}(F_*\CO_X,\CO_X)_{-w}\] 
we have $\bar \sigma_{(w,-w)} = \sum \bar \sigma_i \otimes \bar \sigma^i$. This defines an element of $\Hom_{\CO_V}(F_*\CO_V,\CO_V)$ of some open subset of $V \subset Y \times Y$ intersecting the diagonal.
\end{rem}

We now give a characterization of those invariant splittings $\sigma$ of $X\times X$ which are compatible with the diagonal. For simplicity, we will assume that $H$ is equal to a torus $T$.
For every class $[w] \in M/pM$ we define \[\bar \sigma_{[w]} = \sum_{u \in [w]} \bar{\sigma}_{(u,-u)}.\]
In the following we denote the ideal sheaves of the diagonals in $X \times X$ and $Y \times Y$ by $I_X$ and $I_Y$, respectively.
\begin{thm}
\label{thm:diag-criterion}
  A Frobenius splitting $\sigma$ of $X \times X$ is compatible with the diagonal if and only if for every $[w] \in M/pM$ we have $\bar{\sigma}_{[w]} \equiv \bar{\sigma}_{[0]}\; (\mathsf{mod}\; I_Y)$  and $\bar{\sigma}_{[w]}$ is compatible with the diagonal, that is, $\bar{\sigma}_{[w]}(F_* I_Y)\subset I_Y$.
\end{thm}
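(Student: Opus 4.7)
The plan is to reduce to the isotypical decomposition of $\sigma$, describe the diagonal ideal $I_X$ via the graded structure on $\CO_{X\times X}$, and then extract the two conditions of the theorem by testing $\sigma$ on suitable elements of $F_* I_X$.

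First I reduce to $T_{\mathrm{diag}}$-invariant splittings. The diagonal is stable under the diagonal torus $T_{\mathrm{diag}} \subset T \times T$, whose characters are precisely the pairs $(w, -w) \in M \times M$, so Proposition~\ref{prop:comp2}(1) applied with $T_{\mathrm{diag}}$ in place of $H$ allows me to assume $\sigma$ is $T_{\mathrm{diag}}$-invariant, hence already of the form $\sigma = \sum_{w} \sigma_{(w,-w)}$ of \eqref{eqn:diag}. Via the isomorphism \eqref{eqn:sinv} applied to the $(T\times T)$-action on $X\times X$, any semi-invariant of weight $(u_1, u_2)$ has the unique form $g\cdot \chi^{u_1}\otimes \chi^{u_2}$ with $g \in k(Y\times Y)$; since the cocycle $g_{u,u'}$ is trivial when $H=T$, the restriction to the diagonal is $g|_{\mathrm{diag}}\cdot \chi^{u_1+u_2}$. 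Hence a $T_{\mathrm{diag}}$-homogeneous element $f \in I_X$ of weight $u$ takes the shape $f = \sum_{u_1} g_{(u_1, u-u_1)}\, \chi^{u_1}\otimes \chi^{u-u_1}$ subject only to the single constraint $\sum_{u_1} g_{(u_1, u-u_1)} \in I_Y$.

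Next I compute $\sigma(f)|_{\mathrm{diag}}$. By the inverse formula of Lemma~\ref{lemma:rationaliso} applied to $X\times X$, the summand $\sigma_{(w,-w)}$ sends $g_{(u_1, u-u_1)}\chi^{u_1}\otimes\chi^{u-u_1}$ to $\bar\sigma_{(w,-w)}(g_{(u_1, u-u_1)})\,\chi^{(u_1-w)/p}\otimes \chi^{(u-u_1+w)/p}$ when $w\equiv u_1 \pmod p$ (which forces $u\in pM$), and to zero otherwise. Summing over $w$ and $u_1$, restricting to the diagonal (where the $\chi$-factors collapse to $\chi^{u/p}$), and grouping by the class $[u_1] \in M/pM$, I obtain
\[ \sigma(f)\big|_{\mathrm{diag}} = \chi^{u/p} \sum_{[u_1]\in M/pM} \bar\sigma_{[u_1]}\bigl(G_{[u_1]}\bigr)\big|_{\mathrm{diag}}, \qquad G_{[u_1]} := \sum_{u_1' \in [u_1]} g_{(u_1', u - u_1')}. \]
Since the $g_{(u_1', u-u_1')}$ are arbitrary subject only to $\sum_{[u_1]} G_{[u_1]} \in I_Y$, the compatibility of $\sigma$ with the diagonal is equivalent to the purely $Y$-side statement: for every finite collection $\{G_{[u_1]}\}_{[u_1]\in M/pM}$ of rational functions on $Y$ satisfying $\sum_{[u_1]} G_{[u_1]} \in I_Y$, one has $\sum_{[u_1]} \bar\sigma_{[u_1]}(G_{[u_1]}) \in I_Y$.

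Finally I extract the two conditions of the theorem. Specializing to $G_{[u_1]} = G\cdot \delta_{[u_1],[w]}$ with $G \in I_Y$ forces the compatibility of each $\bar\sigma_{[w]}$ with $I_Y$; specializing to $G_{[0]}=-G$, $G_{[w]}=G$, others zero, for arbitrary $G$ (so $\sum G_{[u_1]}=0 \in I_Y$) forces $\bar\sigma_{[w]}\equiv \bar\sigma_{[0]}\pmod{I_Y}$. Conversely, the rewriting
\[ \sum_{[u_1]} \bar\sigma_{[u_1]}(G_{[u_1]}) = \sum_{[u_1]} \bigl(\bar\sigma_{[u_1]} - \bar\sigma_{[0]}\bigr)(G_{[u_1]}) + \bar\sigma_{[0]}\bigl(\textstyle\sum_{[u_1]} G_{[u_1]}\bigr) \]
shows sufficiency: each term on the right lies in $I_Y$, the first by the congruence and the second by the compatibility of $\bar\sigma_{[0]}$. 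The hard part will be the bookkeeping in the computation of $\sigma(f)|_{\mathrm{diag}}$, since $I_X$ is only $T_{\mathrm{diag}}$-stable (not $(T\times T)$-stable), forcing one to use the finer $(T\times T)$-decomposition of $f$ while respecting the coarser $T_{\mathrm{diag}}$-constraint defining membership in $I_X$, and to track how the Frobenius pullback naturally groups first-factor weights into classes modulo $p$.
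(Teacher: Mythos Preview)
Your proposal is correct and follows essentially the same route as the paper. The only difference is organizational: the paper singles out two explicit families of $k$-vector space generators of $F_*I_X$, namely $f\cdot(\chi^0\otimes\chi^0-\chi^u\otimes\chi^{-u})$ with $f$ arbitrary and $f\cdot(\chi^w\otimes\chi^{-w})$ with $f\in I_Y$, and computes $\sigma$ on each type, whereas you write down the general $T_{\mathrm{diag}}$-homogeneous element of $I_X$ and then specialize---your two specializations ($G_{[w]}=G\in I_Y$ alone, and $G_{[0]}=-G$, $G_{[w]}=G$) are exactly the paper's two generator types. The computation of $\sigma(f)$ and its restriction to the diagonal is identical in both, and your sufficiency rewriting is the paper's argument read backwards.
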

\begin{proof}
 We consider generators of $F_* I_X$  as an $\CO_{X\!\times\!X}$-module. There are two types of generators we have to take into account. One coming from the diagonal of $T \times T$ the other one from the diagonal of $Y \times Y$:
\begin{align}
  f \cdot \chi^0 \otimes \chi^{0} - f \cdot \chi^u \otimes \chi^{-u}&, \qquad u \in M; f \in k(Y) \times k(Y)
\label{eq:ideal-1}
\\
  f \;\cdot\; (\chi^w \otimes \chi^u)&, \qquad u,w \in M \text{ and } f \in I_Y.\label{eq:ideal-2}
\end{align}
In fact, these elements generate $F_* I_X$ as a $k$-vector space.

Assume first that we have an element $g$ of the form (\ref{eq:ideal-2}). Then $\sigma(g)$ will vanish if $u \neq -w$. Assume that  $u = -w$. We obtain
  \begin{align*}
    \sigma(f \;\cdot\; (\chi^w \otimes \chi^{-w})) = &  \sum_{u \in [w]} \sigma_{(u,-u)}(f \;\cdot\; (\chi^w \otimes \chi^{-w}))\\
    = &  \sum_{u \in [w]} \bar{\sigma}_{(u,-u)}(f) \;\cdot\; \chi^{w-u} \otimes \chi^{u-w}\\
    = &  \left(\sum_{u \in [w]} \bar{\sigma}_{(u,-u)}(f)\right)\chi^0 \otimes \chi^0 \quad + \\
   & + \quad  \sum_{u \in [w]}   \bar{\sigma}_{(u,-u)}(f)\cdot(\chi^{w-u} \otimes \chi^{u-w} - \chi^0 \otimes \chi^0).
  \end{align*}
Note that the first summand of the right-hand-side is an element of $I_X$ if and only if  $\sum_{u \in [w]} \bar{\sigma}_{(u,-u)}(f)=\bar \sigma_{[w]}(f)$ is an element of $I_Y$. The second summand is always an element of $I_X$, since $(\chi^{w-u} \otimes \chi^{u-w} - \chi^0 \otimes \chi^0)$ lies $I_X$.

Assume instead we have an element $g$ of the form (\ref{eq:ideal-1}). Then we obtain
  \begin{align*}
     \sigma(g) &= \sigma(f \cdot \chi^0 \otimes \chi^{0} - f\cdot\chi^u \otimes \chi^{-u})\\
 &=  \sigma(f \cdot \chi^0 \otimes \chi^{0}) - \sigma(f\cdot\chi^u \otimes \chi^{-u})\\
&= \sum_{u \in [0]}\sigma_{(u,-u)}(f \cdot \chi^0 \otimes \chi^{0}) -  \sum_{u \in [w]}\sigma_{(u,-u)}(f \cdot \chi^w \otimes \chi^{-w})\\
&= \sum_{u \in [0]}\bar \sigma_{(u,-u)}(f) \cdot \chi^{-u} \otimes \chi^{u} -  \sum_{u \in [w]}\bar \sigma_{(u,-u)}(f) \cdot \chi^{w-u} \otimes \chi^{u-w}\\
&\equiv (\bar \sigma_{[0]}(f) - \bar \sigma_{[w]}(f)) \cdot  \chi^0 \otimes \chi^0.
  \end{align*}
Here, the congruence is modulo elements of the form $( \chi^0 \otimes \chi^0 - \chi^{u} \otimes \chi^{-u}) \in I_X$ as above.
Now, the right-hand-side lies in $I_X$ if and only if $(\bar \sigma_{[0]}(f) - \bar \sigma_{[w]}(f))$ is an element of $I_Y$.
\end{proof}

We obtain the following corollary,  which is a simple generalization of the corresponding result on toric varieties in \cite{payne:09a}.
\begin{cor}
\label{prop:section-weight-criterion}
  Consider a $T$-variety $X$, and suppose $\sigma$ is a splitting of $X\times X$ compatible with the diagonal. Then for every class $[w] \in M/p M$ there must be a representative $u \in [w]$ such that the homogeneous component of weight $(u,-u)$ in $\sigma$ is non-trivial. In particular $\Hom_{\CO_X}(F_*\CO_X, \CO_X)_u \neq 0$ and $\Hom_{\CO_X}(F_*\CO_X, \CO_X)_{-u} \neq 0$. 
\end{cor}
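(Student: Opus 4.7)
The plan is to combine Theorem~\ref{thm:diag-criterion} with an explicit evaluation of $\bar\sigma_{[0]}$ on the constant function $1$. First I would invoke Proposition~\ref{prop:comp2}(1), applied to $X\times X$ viewed as a $T$-variety under the diagonal subtorus $T\subset T\times T$, to reduce to the case that $\sigma$ is diagonal-$T$-invariant; this replacement preserves the $(u,-u)$-weight components whose non-triviality is at stake. Thus I may assume $\sigma=\sum_{u\in M}\sigma_{(u,-u)}$ from the outset.

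The key computation is that $\bar\sigma_{[0]}(1)=1$ in $k(Y\times Y)$. To see this, I would apply $\sigma$ to $\chi^{(pv,-pv)}=(\chi^{(v,-v)})^p$ for each $v\in M$; the $\CO$-linearity relation $\sigma(f^p g)=f\sigma(g)$ together with $\sigma(1)=1$ yields $\sigma(\chi^{(pv,-pv)})=\chi^{(v,-v)}$. Decomposing both sides according to the $(T\times T)$-grading shows that only the summand with $w=0$ can contribute to the weight-$(v,-v)$ piece on the left, so $\sigma_{(0,0)}(\chi^{(pv,-pv)})=\chi^{(v,-v)}$ for all $v\in M$ and $\sigma_{(w,-w)}(\chi^{(pv,-pv)})=0$ for every $w\in pM\setminus\{0\}$. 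Via the identification of Lemma~\ref{lemma:rationaliso} in the product setting, this translates to $\bar\sigma_{(0,0)}(1)=1$ and $\bar\sigma_{(pv,-pv)}(1)=0$ for all $v\neq 0$, and summing gives $\bar\sigma_{[0]}(1)=1$.

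Now suppose for contradiction that $\sigma_{(u,-u)}=0$ for every $u\in[w]$. Then Lemma~\ref{lemma:rationaliso} gives $\bar\sigma_{[w]}=0$, while the congruence $\bar\sigma_{[w]}\equiv\bar\sigma_{[0]}\pmod{I_Y}$ supplied by Theorem~\ref{thm:diag-criterion}, evaluated at $1$, forces $1\in I_Y$, which is absurd. For the final assertion, I would appeal to the presentation recalled in Remark~\ref{rem:pushforward-to-quotient}, which exhibits any element of $\Hom_{\CO_{X\times X}}(F_*\CO_{X\times X},\CO_{X\times X})_{(u,-u)}$ as a sum of tensor products in $\Hom_{\CO_X}(F_*\CO_X,\CO_X)_u\otimes_k\Hom_{\CO_X}(F_*\CO_X,\CO_X)_{-u}$; non-vanishing of such a sum forces non-vanishing of both factors in at least one term.

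The only step that is not a direct formal consequence of Theorem~\ref{thm:diag-criterion} is the weight-decomposition argument establishing $\bar\sigma_{[0]}(1)=1$. This is the main point to verify and it relies crucially on the standing assumption that $M$ has no $p$-torsion, which ensures that distinct classes in $pM$ produce linearly independent weight spaces in $\CO_{X\times X}$.
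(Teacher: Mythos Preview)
Your proof is correct and follows the same route as the paper's one-sentence argument, which reads in full: ``For $\sigma$ to be a splitting, $\bar\sigma_{[0]}$ must be non-trivial, and the result follows by Theorem~\ref{thm:diag-criterion}.'' You have simply made explicit the content of ``non-trivial'' by verifying $\bar\sigma_{[0]}(1)=1$ via the weight decomposition of $\sigma(\chi^{(pv,-pv)})=\chi^{(v,-v)}$, which is exactly the point needed to derive the contradiction $1\in I_Y$ from $\bar\sigma_{[w]}=0$ and the congruence of Theorem~\ref{thm:diag-criterion}.
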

\begin{proof} For $\sigma$ to be a splitting, $\bar \sigma_{[0]}$ must be non-trivial, and the result follows by Theorem \ref{thm:diag-criterion}.
\end{proof}
\begin{rem}[The toric case]
In the toric case, the criterion that for all $[w] \in M/p M$, there must be a representative $u \in [w]$ such that $\Hom_{\CO_X}(F^*\CO_X, \CO_X)_{\pm u} \neq 0$ is exactly the criterion that the polytope $\FF_X:=P_X\cap -P_X$ contains a representative of every class $[w] \in M/p M$, cf. Lemma \ref{lemma:extends}. Payne shows that this criterion is both necessary and sufficient \cite{payne:09a}. The sufficiency of this criterion is easily seen: for any lattice point $u\in \FF_X$, $\Hom_{\CO_X}(F^*\CO_X, \CO_X)_{\pm u} \cong k$ by Remark \ref{rem:bijection}. Since $X$ is complete, $[0]\cap 
(P_X\cap -P_X)=0$, so by Lemma \ref{lemma:invariant}, $\sum_u 1\cdot \chi^u\otimes \chi^{-u}$ corresponds to an invariant splitting $\sigma$ of $X\times X$, where the sum is taken over a choice of representative $u$ for each class of $M/pM$. Now, by Theorem \ref{thm:diag-criterion}, this splitting is compatible with the diagonal.

It was Payne's result which was one of our original motivations for studying $F$-splittings of higher complexity $T$-varieties. As Payne points out, the diagonal of $X\times X$ is not $T\times T$-invariant, but it is invariant with respect to the action of the diagonal torus. We were struck by the fact that Payne's polytope $\mathbb{F}_X=P_X\cap -P_X$ is exactly the polytope corresponding to the anticanonical divisor on the Chow quotient $Z$ of $X\times X$ by the diagonal torus $T$. In fact, our machinery (\S \ref{sec:tfrob} and Proposition \ref{prop:comp2}) can be used to show that a toric variety $X$ is $F$-split if and only if the above quotient $Z$ is split compatibly with some point in the interior of $Z$ (note that $Z$ is a toric variety with respect to the quotient torus $(T\times T)/T$). This is easily seen to be equivalent to Payne's criterion discussed above. We leave the details to the reader. 
\end{rem}

Our next goal is to give a simpler necessary condition for a complexity-one $T$-variety to be diagonally split. To begin with, suppose that $Y$ is any complete variety, and let $\D:M\to \Div_{\QQ}(Y)$ be as in \eqref{eqn:d}. We set
\[U=\spec_{Y}\bigoplus_{u \in M} \CO(\D(u)).\]
Then the quotient pair of $U$ is $(Y,\Delta)$.
\begin{lemma}
\label{lem:shifting}
Assume we are given a diagonal splitting  $\sigma$ of $U$ of the  form \eqref{eqn:diag}. Let $\Delta_+,\Delta_-$, be effective $\QQ$-divisors on $Y$. Suppose that for every $w\in pM$ with $\sigma_{(w,-w)}$ non-trivial there are functions $f^p_w \in K(Y) \subset F_*K(Y)$  satisfying $f_0^p = 1$ and with
\[\div(f^{\pm p}_{w}) +  \lceil (p-1)\Delta + \D(\pm w)\rceil \geq \lceil(p-1)\Delta_\pm\rceil.\] 
Then there
is a diagonal splitting of $(Y;\Delta_+,\Delta_-)$.
\end{lemma}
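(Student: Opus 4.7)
The plan is to define
\[
\tau := \sum_{w \in pM,\; \sigma_{(w,-w)} \neq 0} \bar\sigma_{(w,-w)} \circ \bigl((f_w^p \otimes f_{-w}^p)\cdot -\bigr)
\]
and verify that $\tau$ is a diagonal splitting of $(Y;\Delta_+,\Delta_-)$. Well-definedness of each summand as a map $F_*\CO_{Y\times Y}(\lceil(p-1)(\Delta_+\times Y + Y\times\Delta_-)\rceil)\to \CO_{Y\times Y}$ follows from Lemma~\ref{lemma:homs-separated} (or directly from Lemma~\ref{lemma:piotr} applied to $X^\circ\times X^\circ$), which identifies $\bar\sigma_{(w,-w)}$ with a map out of $F_*\CO_{Y\times Y}(\lceil(p-1)(\Delta\times Y + Y\times\Delta) + \D(w)\times Y + Y\times\D(-w)\rceil)$, together with the divisor estimate in the hypothesis, which says exactly that multiplication by $f_w^p\otimes f_{-w}^p$ sends the former sheaf into the latter.

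For the splitting property $\tau(1)=1$, I will use Frobenius-linearity of $\sigma$ and the identity $\sigma(1) = 1$. For $w \in pM$ with $w\neq 0$, the element $\chi^w\otimes \chi^{-w}$ is (up to a scalar in $k(Y)^*$) the $p$-th power of $\chi^{w/p}\otimes \chi^{-w/p}$, so $\sigma(\chi^w\otimes \chi^{-w})$ is a rational multiple of $\chi^{w/p}\otimes \chi^{-w/p}$ and is therefore purely of weight $(w/p,-w/p)\neq (0,0)$. Decomposing by weight, its weight-$(0,0)$ piece, which is $\sigma_{(w,-w)}(\chi^w\otimes\chi^{-w}) = \bar\sigma_{(w,-w)}(1)$, must vanish. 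Applying Frobenius-linearity once more yields $\bar\sigma_{(w,-w)}((f_w^p\otimes f_{-w}^p)\cdot 1) = (f_w\otimes f_{-w})\bar\sigma_{(w,-w)}(1) = 0$ for every such $w$, and only the $w=0$ term survives, producing $\tau(1) = (f_0\otimes f_0)\bar\sigma_{(0,0)}(1) = 1$, since $f_0=1$ and $\bar\sigma_{(0,0)}(1) = \sigma_{(0,0)}(1) = 1$.

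For compatibility of $\tau$ with the diagonal of $Y\times Y$, I will first show that each $\bar\sigma_{(w,-w)}$ (for $w \in pM$) individually takes the diagonal ideal sheaf $I_Y$ into itself. For $h\in I_Y$, the product $h\cdot(\chi^w\otimes \chi^{-w})$ lies in the diagonal ideal $I_U$ of $U\times U$, because $(\chi^w\otimes \chi^{-w})|_{\Delta_U}=1$ while $h$ is pulled back from $Y\times Y$ and hence vanishes on $\Delta_U$. Compatibility of $\sigma$ with $\Delta_U$ then gives $\sigma(h\cdot(\chi^w\otimes \chi^{-w}))\in I_U$, and extracting the weight-$(0,0)$ component yields
\[
\bar\sigma_{(w,-w)}(h) = \sigma_{(w,-w)}(h\cdot(\chi^w\otimes \chi^{-w})) \in (I_U)_{(0,0)} = I_Y,
\]
where the last equality holds because a regular function on $Y\times Y$ vanishes on $\Delta_U$ if and only if it vanishes on $\Delta_Y$ (using surjectivity of $\Delta_U\to\Delta_Y$). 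This is enough to check compatibility at the generic point of $\Delta_Y$, which in turn suffices globally. Combining this with the divisor estimate, which ensures that $(f_w^p\otimes f_{-w}^p)$ carries $I_Y$-twisted sections of the source into $I_Y$-twisted sections of the target of $\bar\sigma_{(w,-w)}$, each summand of $\tau$ maps $I_Y\cdot\CO_{Y\times Y}(\lceil(p-1)(\Delta_+\times Y + Y\times\Delta_-)\rceil)$ into $I_Y$, hence so does their sum. The main technical subtlety is the identification $(I_U)_{(0,0)} = I_Y$, which is what converts the compatibility of the single splitting $\sigma$ with $\Delta_U$ into the separate diagonal compatibility of each $\bar\sigma_{(w,-w)}$.
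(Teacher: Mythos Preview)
Your construction of $\tau$ and the verification that $\tau(1)=1$ are correct and match the paper's approach. However, your compatibility argument contains a genuine gap.

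You claim that each $\bar\sigma_{(w,-w)}$ individually sends $I_Y$ into $I_Y$, arguing that the weight-$(0,0)$ component of $\sigma(h\cdot(\chi^w\otimes\chi^{-w}))\in I_U$ lies in $(I_U)_{(0,0)}=I_Y$. But this last step fails: the ideal $I_U$ is \emph{not} homogeneous for the full $T\times T$-grading, only for the diagonal $T$-action (since $\Delta_U$ is only invariant under the diagonal torus). Concretely, $\sigma(h\cdot(\chi^w\otimes\chi^{-w}))=\sum_{u\in[w]}\bar\sigma_{(u,-u)}(h)\cdot(\chi^v\otimes\chi^{-v})$, and restricting to $\Delta_U$ collapses all these terms to the single weight-$0$ function $\sum_{u\in[w]}\bar\sigma_{(u,-u)}(h)|_{\Delta_Y}=\bar\sigma_{[w]}(h)|_{\Delta_Y}$. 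So what one actually obtains is $\bar\sigma_{[w]}(h)\in I_Y$ --- this is Theorem~\ref{thm:diag-criterion} --- and not $\bar\sigma_{(w,-w)}(h)\in I_Y$ for each $w$ separately.

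The fix is exactly the mechanism the paper uses. By Frobenius linearity, $\tau(h)=\sum_{w\in pM}(f_w\otimes f_w^{-1})\,\bar\sigma_{(w,-w)}(h)$, and since $(f_w\otimes f_w^{-1})|_{\Delta_Y}=f_w\cdot f_w^{-1}=1$, restricting to the diagonal gives
\[
\tau(h)|_{\Delta_Y}=\sum_{w\in pM}\bar\sigma_{(w,-w)}(h)|_{\Delta_Y}=\bar\sigma_{[0]}(h)|_{\Delta_Y}=0
\]
for $h\in I_Y$, by Theorem~\ref{thm:diag-criterion}. Equivalently, in the paper's formulation, one lifts $\tau$ to $\sigma'$ on $U\times U$ and notes that $(1-f_w^p\chi^w\otimes f_w^{-p}\chi^{-w})\in I_X$, so $\sigma$ and $\sigma'$ agree modulo $I_X$ and compatibility transfers. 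A minor point: your $f_{-w}^p$ should be $f_w^{-p}$ throughout; the hypothesis gives one function $f_w$ per $w$ together with conditions on both $f_w^p$ and its inverse $f_w^{-p}$, and it is precisely the product $f_w^p\otimes f_w^{-p}$ that restricts to $1$ on the diagonal.
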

\begin{proof}
Remember that using Remark~\ref{rem:pushforward-to-quotient} we obtain $\bar \sigma_{(w,-w)}$ as an element of 
\[\Hom_{\CO_Y}(F_*\CO_Y(\lceil(p-1)\Delta + \D(w)\rceil),\CO_Y) \otimes \Hom_{\CO_Y}(F_*\CO_Y(\lceil(p-1)\Delta + \D(-w)\rceil),\CO_Y).\]
By our hypothesis on the $f^p_{w}$, multiplying with $f^p_{w} \otimes f^{-p}_{w}$ gives an element of
\[\Hom_{\CO_Y}(F_*\CO_Y(\lceil(p-1)\Delta_+\rceil,\CO_Y) \otimes \Hom_{\CO_Y}(F_*\CO_Y(\lceil(p-1)\Delta_-\rceil,\CO_Y).\]

We set 
  \[\sigma'  =  \sum_{w} (f^p_{w}\chi^{w} \otimes f^{-p}_{w}\chi^{-w}) \cdot \sigma_{(w,-w)}.\]
By definition, this is a homogeneous element in $$\Hom_{\CO_{X\!\times\!X}}\!(F_* K({X \times X}),K({X \times X}))_{(0,0)}.$$ We obtain
\begin{align*}
 \bar \sigma' &= \sum_{w} \bar{(f^p_{w} \otimes f^{-p}_{w}) (\chi^w \otimes \chi^{-w})\sigma_{(w,-w)}}\\
                   & =  \sum_{w} (f^p_{w} \otimes f^{-p}_{w}) \bar\sigma_{(w,-w)}.
\end{align*}

Now, we claim that  $\bar \sigma'$ gives the desired splitting on $Y \times Y$. To see that it is indeed a splitting, note, that 
$\sigma(1) = \sigma_{(0,0)}(1) = 1$. In particular, all other homogeneous components of $\sigma(1)$ vanish. Hence, multiplying one of these components with some element of the form $f_{w}^p\chi^{w} \otimes f_{w}^{-p}\chi^{-w}$ does not contribute to the degree-$(0,0)$ part of $\sigma'(1)$. We thus obtain $\sigma'(1) = \sigma(1) = 1$.  The same holds for $\bar \sigma'(1)$ which is just the restriction of $\sigma'(1)$ to the invariant functions.

It remains to show that $\bar\sigma'$ is compatible with the diagonal. We have
\[(1 - f^p_{w}\chi^{w} \otimes f^{-p}_{w}\chi^{-w}) \in I_X,\]
so $\sigma(g)$ and $\sigma'(g)$ differ only by an element of $I_X$. On the other hand, for some element $g \in F_*I_X$ we obtain $\sigma(g) \in I_X$, since $\sigma$ is compatible with the diagonal. Hence, $\sigma'(F_*I_X) \subset I_X$ holds. Since $\sigma'$ is of degree $(0,0)$ we also have $\sigma'(F_*I_{(0,0)}) \subset I_{(0,0)}$. Then we are done, since $I_{(0,0)}$ gives the ideal sheaf for the diagonal of $Y \times Y$ and $\bar{\sigma}'$ is just the restriction of $\sigma'$ to the degree $(0,0)$ part.
\end{proof}

Let us denote by $\supp_1 \sigma$ the set of degrees $w \in M$ such that 
the homogeneous component of degree $(w,-w)$ of $\sigma$ is non-trivial.

\begin{lemma}
\label{lem:sublattice-diagonal-split}
  Consider the subset $\bar M \subset M$ of those $u \in M$ such that $\D(u)$ is principal. Suppose there is a diagonal splitting $\sigma$ of $U$ satisfying
  \[\supp_1 \sigma \cap pM \subset p \bar M.\]
  Then there exists a diagonal splitting of $(Y,\Delta)$.
\end{lemma}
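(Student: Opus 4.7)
The plan is to apply Lemma~\ref{lem:shifting} with $\Delta_+ = \Delta_- = \Delta$, which by definition would then produce a diagonal splitting of the pair $(Y,\Delta)$ directly. The task is to exhibit, for each weight $w \in \supp_1 \sigma \cap pM$, rational functions $f_w^{\pm p} \in K(Y)^*$ satisfying $f_0^p = 1$ together with the divisor inequalities required by that lemma.

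For such a $w$, the hypothesis $\supp_1 \sigma \cap pM \subset p\bar M$ lets us write $w = pu$ with $u \in \bar M$; here $u$ is uniquely determined by $w$ since $M$ is torsion-free under the standing assumption $H = T$. By definition of $\bar M$, I will pick $g_u \in K(Y)^*$ with $\div(g_u) = \D(u)$ (this is precisely where membership in $\bar M$ is used: for $u \notin \bar M$ no such $g_u$ need exist), taking in particular $g_0 = 1$ to handle the normalization. Then I will set $f_w^p := g_u^{-p}$, which gives $f_w^{-p} = g_u^p$ and $f_0^p = 1$.

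The verification reduces to a short bookkeeping. Since $\D(\pm w) = \pm p\D(u) = \div(g_u^{\pm p})$ is now an \emph{integral} divisor, rounding is additive: $\lceil(p-1)\Delta + \D(\pm w)\rceil = \lceil(p-1)\Delta\rceil + \D(\pm w)$. Combined with $\div(f_w^{\pm p}) = -\D(\pm w)$, the contributions cancel and the left-hand side of the inequality in Lemma~\ref{lem:shifting} equals $\lceil(p-1)\Delta\rceil$ on the nose, so the inequality is met (with equality). Lemma~\ref{lem:shifting} then yields the desired diagonal splitting of $(Y,\Delta)$.

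I do not anticipate a real obstacle: once the correspondence $w \leftrightarrow u \leftrightarrow g_u$ is set up using the hypothesis, the rest is essentially a direct rewriting. The only mild care needed is in choosing $g_0 = 1$ to guarantee the normalization $f_0^p = 1$ demanded by Lemma~\ref{lem:shifting}, and in noting that weights $w \in \supp_1 \sigma \setminus pM$ do not enter the construction at all since Lemma~\ref{lem:shifting} only requires data for $w \in pM$.
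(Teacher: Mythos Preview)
Your argument is correct and matches the paper's approach: the paper's proof is the one-line ``apply Lemma~\ref{lem:shifting} with $\div(f_w^p)=\D(w)$,'' and you have unpacked exactly this, taking $\Delta_+=\Delta_-=\Delta$ and choosing $f_w$ so that $\div(f_w^p)=-\D(w)$ (the sign needed for the inequalities to become equalities, which the paper's terse statement leaves implicit). Your observations that $\D(w)$ is integral so the ceiling splits, and that only the weights in $pM$ require data, are precisely the points that make the application go through.
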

\begin{proof}
  This is just Lemma~\ref{lem:shifting} applied to the case $\div(f_w^p) = \D(w)$.
\end{proof}

Let's now consider the case that the torus action on $X$ is of complexity one. This means that $C=\Ysep$ is a curve. If $X$ is diagonally split, it is $F$-split as well and by Theorem~\ref{thm:compone} we know that $g(C,\Delta) \leq 1$, i.e. the curve is either elliptic or $\PP^1$.
\begin{prop}
  If $g(C,\Delta) > \nicefrac{1}{2}$ then an invariant diagonal splitting has to have a non-trivial component in a non-zero degree $(w,-w) \in pM \times pM$. 
\end{prop}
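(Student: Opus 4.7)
The plan is to argue by contraposition: I will assume $\sigma = \sum_{w \in M} \sigma_{(w,-w)}$ is an invariant diagonal splitting of $X \times X$ with $\sigma_{(w,-w)} = 0$ for every $w \in pM\setminus\{0\}$, and deduce $g(C,\Delta) \le \nicefrac{1}{2}$. The first step is to observe that $\sigma_{(0,0)}$ must itself be a non-trivial splitting: since $\sigma(1) = 1$ and $\sigma_{(w,-w)}(1)$ has weight $(-w/p, w/p)$ (hence vanishes for $w \notin pM$), the hypothesis forces all contributions to $\sigma(1)$ to come from $\sigma_{(0,0)}$, giving $\sigma_{(0,0)}(1) = 1$ and $\supp_1 \sigma \cap pM = \{0\}$.

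Next, set $\bar M = \{u \in M : \D(u) \text{ is principal}\}$. Since $\D(0) = 0$, we have $0 \in \bar M$, so the inclusion $\supp_1 \sigma \cap pM \subseteq p\bar M$ required by Lemma~\ref{lem:sublattice-diagonal-split} holds trivially. Applied to $U = X^\circ$ (whose quotient pair is $(Y,\Delta)$), the lemma produces a diagonal splitting of $(Y,\Delta)$. I then transport this splitting across the separation $\s : Y \dashrightarrow C = \Ysep$: because $\s$ is a local isomorphism on a big open subset $V \subseteq Y$, the product $\s \times \s$ identifies a big open of $Y \times Y$ meeting the diagonal with a big open of $C \times C$ meeting the diagonal. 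The argument used in the proof of Theorem~\ref{thm:sep}, together with the $S_2$-property of the relevant reflexive sheaves (cf.\ Remark~\ref{rem:duality}), then extends the restricted splitting uniquely to a diagonal splitting of $(C,\Deltasep)$.

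Finally, Example~\ref{ex:diag-splitting-curves} applied to the diagonally split complete pair $(C,\Deltasep)$ forces $g(C,\Deltasep) \le \nicefrac{1}{2}$, contradicting $g(C,\Delta) > \nicefrac{1}{2}$. The main obstacle is the second step, namely verifying that the transport across the separation preserves compatibility with the diagonal after extending across loci of codimension $\ge 2$. The essential point is that the ideal sheaf of the diagonal of $C \times C$ is reflexive---in fact locally principal on the smooth locus---so that a diagonal splitting defined on a big open extends uniquely to all of $C \times C$ in a way that remains compatible with the diagonal, following the pattern used for the boundary ideal $I_B$ in Lemma~\ref{lemma:extends} and for $\overline{\pi^{-1}(S)}$ in Proposition~\ref{prop:comp2}.
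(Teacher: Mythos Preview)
Your overall strategy matches the paper's: contraposition, then Lemma~\ref{lem:sublattice-diagonal-split}, then Example~\ref{ex:diag-splitting-curves}. Your first paragraph, establishing that $\sigma_{(0,0)}$ is itself a splitting and that $\supp_1\sigma\cap pM\subseteq\{0\}\subseteq p\bar M$, is correct and in fact more explicit than the paper.

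The difference is in how you handle the non-separated quotient $Y$, and this is where there is a genuine gap. You apply Lemma~\ref{lem:sublattice-diagonal-split} on $X^\circ$ to obtain a diagonal splitting of $(Y,\Delta)$, then try to transport it to $(C,\Deltasep)$ using a ``big open plus $S_2$'' argument. But in complexity one, $Y$ and $C$ are curves and the separation $\s:Y\to C$ is an everywhere-defined surjective local isomorphism that is \emph{not} injective over the finitely many doubled points. The product $\s\times\s:Y\times Y\to C\times C$ therefore fails to be injective along $(\text{doubled points})\times Y\cup Y\times(\text{doubled points})$, a locus of codimension $1$. There is no big open of $Y\times Y$ on which $\s\times\s$ is an isomorphism onto a big open of $C\times C$, so the $S_2$-extension step does not apply as stated.

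The paper sidesteps this entirely: it first restricts $X$ to a $T$-invariant open subset $U\subset X$ whose quotient is already the separated curve $C$ (choosing, over each doubled point, the preimage with maximal stabilizer order so that the boundary divisor becomes $\Deltasep$). The restricted $\sigma$ is still a diagonal splitting of $U\times U$, still satisfies $\supp_1\sigma\cap pM\subseteq\{0\}$, and now Lemma~\ref{lem:sublattice-diagonal-split} yields a diagonal splitting of $(C,\Deltasep)$ directly, with no transport needed. Your argument can be repaired the same way, or alternatively by noting that $\s\times\s$ is itself a separation of $Y\times Y$ and invoking Theorem~\ref{thm:sep}(2) with $S$ the diagonal; either route is cleaner than the $S_2$ justification you sketched.
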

\begin{proof}
If an invariant splitting doesn't have a non-trivial component in a non-zero degree $(w,-w) \in pM \times pM$, then (by restricting to an open subset subset of $X$)  Lemma~\ref{lem:sublattice-diagonal-split} would provide us with a diagonal splitting of $(C,\Delta)$, which is impossible by Example~\ref{ex:diag-splitting-curves}.
\end{proof}

\begin{thm}
\label{thm:diag-nec-crit-cplx-one}
  Let $X$ be a complete diagonally split T-variety of complexity one. Then $C=\PP^1$ and we are in the cases $(1,*,*)$ or $(2,2,2)$ from Theorem~\ref{thm:compone}.
\end{thm}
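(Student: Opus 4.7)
Since $X$ is complete, the separation $C = \Ysep$ is a complete smooth curve, and since $X$ is diagonally split it is in particular $F$-split. Applying Theorem~\ref{thm:compone} leaves the options that $C$ is an ordinary elliptic curve with $\Delta = 0$ (and $T$ acting freely on $X^\circ$), or that $C = \PP^1$ with stabilizer configuration given by one of the triples in Table~\ref{table:triples} or the quadruple $(2,2,2,2)$. A short check computing $g(C,\Delta) = (\deg \Delta + 2g(C))/2$ shows that in every configuration other than $(1,*,*)$ and $(2,2,2)$ on $\PP^1$ we have $g(C,\Delta) > 1/2$, so the preceding Proposition guarantees that any invariant diagonal splitting $\sigma$ of $X$ must have a non-trivial homogeneous component $\sigma_{(w,-w)}$ with $w\in pM\setminus\{0\}$.

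Non-vanishing of this K\"unneth component forces both $\Hom_{\CO_X}(F_*\CO_X,\CO_X)_w$ and $\Hom_{\CO_X}(F_*\CO_X,\CO_X)_{-w}$ to be non-zero, which by Lemma~\ref{lemma:homs-separated} and Grothendieck duality (Remark~\ref{rem:duality}) is equivalent to the non-negativity of $\deg\bigl((1-p)K_C - \lceil (p-1)\Delta + \D(\pm w)\rceil\bigr)$. In the elliptic case, $K_C = 0$ and $\Delta = 0$, so both $-\D(w)$ and $\D(w)$ must be linearly equivalent to effective divisors; on an elliptic curve this forces $\D(w)$ to be principal of degree zero. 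But $X$ being complete forces $\bar M = \{0\}$: any principal $\D(w)$ for $w\neq 0$ would yield a non-constant semi-invariant section of $\CO_X$, contradicting $H^0(X,\CO_X) = k$. This rules out the elliptic case.

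For the excluded $\PP^1$ configurations the plan is to sum the two degree inequalities for $\pm w$ and use $\D(w) + \D(-w) = 0$ as $\QQ$-divisors, which yields
$$
 4(p-1) \;\geq\; \deg\lceil (p-1)\Delta + \D(w) \rceil + \deg\lceil (p-1)\Delta + \D(-w)\rceil \;\geq\; 2(p-1)\deg\Delta - \epsilon,
$$
where $\epsilon$ bounds the total ceiling discrepancy at points in the support. For the $g(C,\Delta) = 1$ triples $(2,3,6), (2,4,4), (3,3,3)$ and for $(2,2,2,2)$, equality $\deg \Delta = 2$ forces the ceilings to be tight, and the resulting integrality constraints on $(p-1)\Delta + \D(w)$ at each $c_i$ force $w$ into the integral sublattice $\ker\bigl(\bar\D\colon M\to\CaDiv_{\QQ/\ZZ} C\bigr)$; a separate argument via completeness of $X$ then rules out such $w\neq 0$. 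For the remaining $g<1$ triples $(2,2,n)_{n\geq 3}, (2,3,3), (2,3,4), (2,3,5)$, a more delicate bookkeeping of $\epsilon$ using the exact denominators $\mu(c_i)$ and the allowed range of $w \in pM$ in the polytope $(p-1)(P_X\cap -P_X)$ is required; I expect this case-by-case refinement of the ceiling error to be the main obstacle, but in each case the arithmetic of the multiplicities prevents the inequality from being satisfied by any $w \in pM\setminus\{0\}$.
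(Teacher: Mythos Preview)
Your plan for the genus-one cases is essentially right, but the key step ``$X$ complete forces $\bar M=\{0\}$'' is false as stated: take $X=\PP^1\times E$ with $\GG_m$ acting on the first factor, where $\D\equiv 0$ and $\bar M=M=\ZZ$. What is true, and what you actually need, is that for the \emph{specific} $w\in pM$ with $\sigma_{(w,-w)}\neq 0$ one has $\pm w\in (p-1)P_X$, hence $|\rho_D(w)|\le p-1$ for every boundary divisor $D$; since $\rho_D(w)\in p\ZZ$ this forces $\rho_D(w)=0$. Then, once you know $\D(w)$ is principal (which your ceiling-tightness argument does give for all $g(C,\Delta)=1$ configurations), $f^{-1}\chi^w$ extends to a nonconstant global regular function on the complete variety $X$, a contradiction. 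With this fix your genus-one argument is a genuinely different and somewhat cleaner alternative to the paper's approach, which instead invokes Lemma~\ref{lem:sublattice-diagonal-split} (together with the sublattice identity $M'\cap pM=pM'$) to manufacture a diagonal splitting of $(C,\Delta)$ and then contradicts Example~\ref{ex:diag-splitting-curves}.

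The real gap is in your treatment of the remaining cases $(2,2,r)_{r\ge 3}$, $(2,3,3)$, $(2,3,4)$, $(2,3,5)$. The summed degree inequality simply does not rule out nonzero $w\in pM$. For instance, take $(2,2,3)$ with $p=5$: at $c_1,c_2$ the contribution $\lceil (p-1)/2+\alpha_i(w)\rceil+\lceil (p-1)/2-\alpha_i(w)\rceil$ is $4$ or $5$, while at $c_3$ one checks the contribution is always $6$ regardless of $\alpha_3(w')\bmod 1$; the total is at most $16=4(p-1)$, so no contradiction. The polytope constraint cannot save this either, since complete complexity-one $T$-varieties with no boundary divisors (or with $\rho_D$ spanning a proper subspace) exist. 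The paper handles these cases by an entirely different mechanism: it first shows $\deg\D(w)=0$ for $w\in\supp_1\sigma\cap pM$ via a case check on the minimal positive value of $\deg\D$ on $pM$, and then uses the shifting Lemma~\ref{lem:shifting} to produce from $\sigma$ a diagonal splitting of a \emph{triple} $(C;\Delta_+,\Delta_-)$ with $\Delta_\pm$ chosen so that $g(C,\Delta_\pm)>1/2$, again contradicting Example~\ref{ex:diag-splitting-curves}. You will need some version of this construction (or a new idea) to close the $g<1$ cases.
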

\begin{proof}
  
  By restricting to an open subset $U$ of $X$ we assume that $Y=\Ysep=C$. Given a diagonal splitting $\sigma$ of form \eqref{eqn:diag} let $M' \subset M$ be the sublattice generated by $\supp_1 \sigma$. Note that by Proposition~\ref{prop:section-weight-criterion} the quotient $M'/(M' \cap pM)$  surjects to $M/pM \cong M \otimes_\ZZ \FF_p$ but this implies that $M' \cap pM  =pM'$. Indeed, given a $\ZZ$-basis $e_1',\ldots, e_\ell'$ of $M'$ we may consider its image in $M/pM$. By our condition on $\supp_1 \sigma$, the images of the basis vectors span $M/pM$. But this implies that they are linearly independent over $\FF_P$. Now, given an integral linear combination $u' = \sum_i \lambda_i e'_i$ of the basis elements which lies in $pM$ gives rise to a linear combinations 
$0 = \sum_i \overline{\lambda}_i \overline{e}_i'$ in $M/pM$. By linear independence the coefficients $\overline{\lambda}_i$ have to vanish. Hence, their representatives $\lambda_i$ are elements of $p\ZZ$ and $u'$ is an element of $pM'$.

  Let us now consider the case of pairs $(C,\Delta)$ of genus $1$.  Remember that  
\[\Hom_{\CO_{C}}(F_*\CO_{C}((p-1)\Delta),\CO_{C})^* \cong  H^0\left(C,\CO_C(\lceil(1-p)(K_C+\Delta)\rceil)\right).
\]
Hence, by Lemma~\ref{lemma:piotr} we have  $\deg \D(w) \leq 0$ and  $\deg \D(-w) \leq 0$ for every $w$ in the support of $\sigma$. By linearity this implies $\deg \D(w)=0$ for $w \in M'$. Again by Lemma~\ref{lemma:piotr}, $\D(w)$ has to be a principal divisor for $w$ in the support of $\sigma$ and hence for every $w \in M'$, as well.  Hence, we can take $\overline M = M'$ and apply  Lemma~\ref{lem:sublattice-diagonal-split}, using that $M'\cap pM=pM'$. We obtain a diagonal splitting of $(C, \Delta)$. But this is impossible by Example~\ref{ex:diag-splitting-curves}.

By Table \ref{table:triples}, the remaining cases we must rule out are those of
pairs $(C,\Delta)$ of genus larger than $ \nicefrac{3}{2}$, that is, the cases of the triples
$(2,2,r)$ ($r>2$), $(2,3,3)$, $(2,3,4)$, and $(2,3,5)$.
We set $\overline M$ to be the sublattice of $M'$ consisting of those $u$ such that $\deg \D(u) = 0$. Our first claim is that $\supp_1 \sigma \cap pM \subset \overline M$.  Indeed, if $\deg \D(w) > 0$ for some $w \in pM$,  one can check case by case that $\deg \D(w)$ would be at least $p \cdot (2 - \deg \Delta)$. Now, we would have  
\[(1-p)\deg(\Delta + K_C) - \deg\D(w)  < 0\]
 and there cannot be a non-trivial homomorphism in degree $w$. On the other hand, if $\deg \D(w) < 0$ than we have $\deg \D(-w) > 0$. Hence, we must have  $\deg\D(w)=0$ for all degrees in $\supp_1 \sigma \cap pM$.

We can apply the same methods as in the genus $1$ case if $\D(w)$ is integral for every $w \in \overline M$. If we are in the case $(2,3,5)$ this has to hold true, since there is no way to obtain $\nicefrac{a}{2} + \nicefrac{b}{3} + \nicefrac{c}{5}$ being an integer without all the summands being integers.

For the remaining cases, we will use the  diagonal splitting of $U$ to construct  a diagonal splitting of $(C;\Delta_+,\Delta-)$. Here, writing $\Delta=a_1[c_1] + a_2[c_2]+a_3[c_3]$, we take $\Delta_+=a_1[c_1] + a_3[c_3]$ and $\Delta_-=a_2[c_2] + a_3[c_3]$. Note that by properly ordering $a_1,a_2,a_3$, we have $\deg \Delta_+ = a_1+a_3 > 1$ and $\deg \Delta_-=a_2+a_3 >1 $ so as before, by Example~\ref{ex:diag-splitting-curves} we will obtain a contradiction.

We will discuss the case $\Delta = \nicefrac{2}{3}[c_1] + \nicefrac{2}{3}[c_2]+\nicefrac{1}{2}[c_3]$ in detail; the other cases follow similarly. 
We wish, for any $w\in\supp_1\sigma \cap pM$, to produce a function $f_w$ as in Lemma \ref{lem:shifting}. Write such $w$ as $w = (\ell p)w'$ with $\ell \in \NN$ and $w' \in M'$ a primitive lattice element. Now, we have seen above that $\D(w')$ is of degree $0$. If $\D(\ell w')$ is integral, then we set $D_w = \D(\ell w')$ and have $\D(w) = pD_w$. Since $D_w$ has degree zero, it is principal, that is, $D_w=\div f_w$ for some rational function $f_w$. Furthermore, this $f_w$ satisfies the requirements of Lemma \ref{lem:shifting}, since
$$
\lceil (p-1)\Delta + \D(\pm w) \mp  pD_w\rceil = \lceil (p-1)\Delta \rceil \geq  \lceil (p-1)\Delta_+\rceil,\lceil (p-1)\Delta_-\rceil.
$$

Assume now instead that $\D(\ell w')$ is not integral. Since it has degree $0$,  up to changing the roles of $c_1$ and $c_2$  we have $\D(w')=\nicefrac{1}{3}[c_1] - \nicefrac{1}{3}[c_2] + D_0$, with $D_0$ some integral divisor of degree $0$. This means that 

\begin{align*}
  \lceil (p-1)\Delta + \D(w) \rceil &= \left\lceil\frac{(2+\ell) p - 2}{3}\right\rceil[c_1]+\left\lceil\frac{(2-\ell) p - 2}{3}\right\rceil[c_2] + \left\lceil \frac{p}{2}\right\rceil[c_3] + \ell p D_0.
\end{align*}

Now, if $\ell \equiv 0 \mod 3$ , then $\D(\ell w')$ is integral so the case above applies. Suppose instead that  
$\ell \equiv  2 \mod 3$. Then we obtain
\begin{align*}
   \lceil (p-1)\Delta + \D( w) \rceil &= \left\lceil\frac{(2+2) p - 2}{3}\right\rceil[c_1]+\left\lceil\frac{(2-2) p - 2}{3}\right\rceil[c_2] + \left\lceil\frac{p}{2}\right\rceil[c_3] +p D_w'\\
   &= \left\lceil\frac{4p - 2}{3}\right\rceil[c_1]+ \left\lceil\frac{p}{2}\right\rceil[c_3]+ p D_w';\\
   \lceil (p-1)\Delta + \D(-w) \rceil &= \left\lceil\frac{(2-2) p - 2}{3}\right\rceil[c_1]+\left\lceil\frac{(2+2) p - 2}{3}\right\rceil[c_2] + \left\lceil\frac{p}{2}\right\rceil[c_3] -p D_w'\\
   &= \left\lceil\frac{4p - 2}{3}\right\rceil[c_2]+ \left\lceil\frac{p}{2}\right\rceil[c_3]+ p D_w'
\end{align*}
with $D'_w$ being an integral divisor of degree $0$, hence of the form $\div f_w$ for some rational function $f_w$.
Since
\begin{align*}
\lceil (p-1)\Delta + \D(w) - pD_w'\rceil &\geq  \lceil (p-1)\Delta_{+}\rceil\\
\lceil (p-1)\Delta + \D(-w) + pD_w'\rceil &\geq  \lceil (p-1)\Delta_{-}\rceil
\end{align*}
the function $f_w$ fulfills the requirements for Lemma \ref{lem:shifting}.

If instead $\ell \equiv -2 \mod 3$, a similar analysis also produces a function $f_w$ satisfying the requirements of Lemma \ref{lem:shifting}. 
Now, applying Lemma~\ref{lem:shifting} we obtain a diagonal splitting of $(C;\Delta_+,\Delta_-)$. But as we have seen, this is impossible.
\end{proof}

\begin{ex}[Blowup of a flag variety (continued)]\label{ex:bl2}
Once more consider the variety $\widetilde W$ from Example~\ref{ex:main}. Remember, that the piecewise linear function from Lemma \ref{lemma:homs-separated} describing the homogeneous components of $\Hom(F_*\CO_{\widetilde W}, \CO_{\widetilde W})$ was given in Example~\ref{ex:bl1} by
\[ h(a,b) =  \max\{-a,0\}[1] \;+\; \max\{-b,0\}[1] \;+\; \max\{a+b,0\}[\infty].\]

Now, for every pair of integers $w=(a,-b)$ with $0\leq a,b \leq p-1$ we set $w'=(a-p,-b)$ and we have $w,w' \in (p-1)P_{\widetilde W}$.
Moreover, we obtain\\
\begin{minipage}[t]{0.5\textwidth}
\begin{align*}
   h(w) &=  b[1] \;+\; \max\{a-b,0\}[\infty], \\
   h(w') &=  (p-a)[0]\;+\; b[1],
\end{align*}
\end{minipage}
\begin{minipage}[t]{0.5\textwidth}
\begin{align*}
  h(-w) &= a[0] \;+\; \max\{b-a,0\}[\infty],\\
  h(-w')&= (b+p-a)[\infty],
\end{align*}
\end{minipage}
\smallskip

Recall (Remark \ref{rem:duality}) that there is an isomorphism 
\[
\Hom_{\CO_{\PP^1}}(F_*\CO_{\PP^1}(D), \CO_{\PP^1}) \cong H^0(\PP^1, \CO((1-p)K_{\PP^1}-D)).
\]
We will denote this correspondence by the symbol $\triangleq$. 

For $w$ and $w'$ as above and $K = K_{\PP^1} = -[0]-[\infty]$ we consider 
\begin{align*}
  \sigma_{(w,-w)} &\triangleq \sum_{i=0}^{p-a-1} y^{1-p+i}(y-1)^{p-1}\otimes y^{-i}\quad \\
  &\in  H^0(\CO(-K-\lceil h(w)\rceil)) \otimes H^0(\CO(-K-\lceil h(-w)\rceil)), \\
  \sigma_{(w',-w')} &\triangleq \sum_{i=p-a}^{p-1} y^{1-p+i}(y-1)^{p-1} \otimes y^{-i}\\
 &\in H^0(\CO(-K-\lceil h(w')\rceil)) \otimes H^0(\CO(-K-\lceil h(-w)'\rceil))
\end{align*}
as elements of 
\[\Hom(F_*\CO_{\widetilde W}, \CO_{\widetilde W})_{w} \otimes \Hom(F_*\CO_{\widetilde W}, \CO_{\widetilde W})_{-w} =\Hom(F_*\CO_{\widetilde W \times \widetilde W}, \CO_{\widetilde W \times \widetilde W})_{(w,-w)}\]
and $\Hom(F_*\CO_{\widetilde W \times \widetilde W}, \CO_{\widetilde W \times \widetilde W})_{(w',-w')}$, respectively.

We set $\sigma$ to be the sum of all these $\sigma_{(w,-w)}$ and $\sigma_{(w',-w')}$. Then we obtain
\begin{align}
 \bar \sigma_{[w]} &\triangleq  \sum_{i=0}^{p-1} y^{1-p+i}(y-1)^{p-1}\otimes y^{-i} \label{eq:4}\\ \nonumber
                       &= \frac{(y\otimes 1 - 1\otimes y)^{p-1}}{y^{p-1} \otimes 1} \cdot (y-1)^{p-1} \;\in H^0(\PP^1 \times \PP^1,\CO(-K_{\PP^1 \times \PP^1}-\text{diag}))
\end{align} 
Hence, we have $\bar \sigma_{[w]} = \bar \sigma_{[0]}$ for every $w \in M$. Moreover,  $\bar \sigma_{[w]}$ is compatible with the diagonal. It remains to show that $\sigma$ is actually a splitting. To see this, note, that $\sigma_0$ is the only non-trivial homogeneous component $\sigma_{(w,-w)}$ with $w \in pM$. Moreover, $\bar{\sigma}_0 = \bar{\sigma}_{[0]}$ defines a splitting for $\PP^1 \times \PP^1$, since the monomial $1$ occurs with coefficient $1$ in (\ref{eq:4}). Hence, we have $\sigma(1) = \sigma_0(1) = 1$.

We just proved that the blowup $\widetilde W$ of the flag variety $W$ is diagonally split. This implies also that the blow up in only one of the curves and $W$ itself are diagonally split. The latter was previously known, since all flag varieties are diagonally split by \cite{ramanathan:87a}.
\end{ex}

\begin{ex}
 Consider the blowup $X$ of $\PP^1 \times \PP^1 \times \PP^1$ in a curve of degree $(0,1,1)$; this is number 4.8 in the classification of Fano threefolds by Mori and Mukai \cite{mori:81a}. There is a $\GG_m^2$-action here defined by the weight matrix
\[
\begin{array}{rrrrrrrl}
  &u_0&u_1&v_0&v_1&w_0&w_1&
\vspace{2mm}\\
 \ldelim({2}{0.5ex}
  &1&0&0&0&0&0&\rdelim){2}{0.5ex} \\
  &0& 0&1&0&-1&0& \quad
\end{array}
\]
where the $u_i,v_i,w_i$ are homogeneous coordinates on the three factors of $\PP^1$.
We may assume that the center of the blow up is  the curve $C=\{1\} \times V(v_0w_0-v_1w_1)$. The quotient is again a non-separated $\PP^1$ with the points $0,1,\infty$ doubled. The separation is just $\PP^1$ and the corresponding quotient map is given by
\[(u_0:u_1, v_0:v_1,w_0:w_1) \mapsto (v_0w_0:v_1w_1)\]
and we have two prime divisors in $X \setminus X^\circ$ with corresponding one-parameter subgroups $\pm \rho \in N \cong \ZZ^2$, with $\rho=(0,1)$. This and the piecewise linear function $h:P_X \to \Div_\QQ \PP^1$ can be obtained similarly to Example~\ref{ex:main} or read off from the data  given in \cite{suess:14a}. For $h$ we obtain
\[ h(a,b) =  \max\{-a,0\}[1] \;+\; \max\{-b,0\}[1] \;+\; \max\{a,0\}[\infty].\]

One checks that $\sigma_{(w,-w)}$ as defined in Example \ref{ex:bl2} is again an element of  $H^0(\CO(-K-\lceil h(w)\rceil)) \otimes H^0(\CO(-K-\lceil h(-w)\rceil))$ and similarly for  $\sigma_{(w',-w')}$. Hence, we can again take the sum of all $\sigma_{(w,-w)}$ and  $\sigma_{(w',-w')}$ to obtain a diagonal splitting for $X$.
\end{ex}

\bibliographystyle{alpha}
\bibliography{t-frobenius}
\end{document}